\definecolor{myred}{rgb}{0.65, 0, 0}
\definecolor{mygreen}{rgb}{0, 0.65, 0}
\definecolor{myblue}{rgb}{0, 0, 0.65}
\newcommand*{\transpose}[1]{#1^\mathrm{T}}
\newcommand*{\pscal}[1]{\langle #1\rangle} 
\newcommand*{\Pscal}[1]{\left\langle #1\right\rangle}
\newcommand*{\Cal}[1]{\mathcal{#1}}
\newcommand*{\Rm}[1]{\mathrm{#1}}
\newcommand*{\Norme}[1]{\left\lVert#1\right\rVert}
\newcommand*{\norme}[1]{\lVert#1\rVert}
\newcommand*{\Absolu}[1]{\left\lvert#1\right\rvert}
\newcommand*{\absolu}[1]{\lvert#1\rvert}
\newcommand*{\Set}[1]{\left\lbrace #1 \right\rbrace}
\newcommand{\spectre}[1][]{\Sigma_\text{#1}}
\newcommand*{\floor}[1]{\lfloor #1\rfloor}
\DeclareMathOperator*{\real}{\mathrm{Re}}
\DeclareMathOperator*{\imag}{\mathrm{Im}}
\DeclareMathOperator{\range}{im}
\DeclareMathOperator*{\fred}{Fred}
\DeclareMathOperator*{\codim}{codim}
\DeclareMathOperator*{\Det}{Det}
\tikzstyle{fleche}=[->, >=latex]
\tikzset{
	xa/.store in=\xa, xa/.default=0, xa=0,%
	xb/.store in=\xb, xb/.default=0, xb=0,%
	xc/.store in=\xc, xc/.default=0, xc=0,%
}
\newcommand{\Imagepath}[1]{./#1}
\newcommand{\wrt}{w.r.t.\xspace}
\newcommand*{\kpp}{KPP\xspace}
\newcommand*{\sh}{SH\xspace}
\newcommand*{\gl}{{GL}\xspace}
\newcommand*{\guillemet}[1]{``#1"}
\renewcommand*{\iff}{\emph{iff}~}
\newcommand*{\ie}{\emph{i.e.}\xspace}
\newcommand*{\N}{\mathbb{N}}
\newcommand*{\Z}{\mathbb{Z}}
\newcommand*{\R}{\mathbb{R}}
\newcommand*{\C}{\mathbb{C}}
\newcommand*{\lra}{\longrightarrow}
\renewcommand*{\d}{\mathrm{d}}
\newcommand*{\spacemath}{\ }
\newcommand{\symbkpp}{\text{kpp}}
\newcommand{\symbsh}{\text{sh}}
\newcommand{\symbcouple}{\text{co}}
\newcommand{\symbc}{\text{c}}
\newcommand{\symbs}{\text{s}}
\newcommand{\symbapr}{\text{apr}}
\newcommand{\symbatt}{\text{att}}
\newcommand{\symbstab}{\text{stab}}
\newcommand{\Thetac}{{\Theta_\symbc}}
\newcommand{\Thetas}{{\Theta_\symbs}}
\newcommand{\vPic}{\varPi_\symbc}
\newcommand{\vPis}{\varPi_\symbs}
\newcommand{\vPich}{\varPi_\symbc^h}
\newcommand{\vPish}{\varPi_\symbs^h}
\newcommand{\vPih}{\varPi^h}
\newcommand{\Vc}{V_\symbc}
\newcommand{\Vs}{V_\symbs}
\newcommand{\Wc}{W_\symbc}
\newcommand{\Ws}{W_\symbs}
\newcommand{\Rc}{R_\symbc}
\newcommand{\Rs}{R_\symbs}
\newcommand*{\Au}[1][]{\Cal{A}^{\symbkpp #1}}
\newcommand*{\Av}[1][]{\Cal{A}^{\symbsh #1}}
\newcommand*{\Lu}[1][]{\Cal{L}^{\symbkpp #1}}
\newcommand*{\Lv}[1][]{\Cal{L}^{\symbsh #1}}
\newcommand*{\Glu}[1][\ensuremath{\lambda}]{G_{#1}^\symbkpp}
\newcommand*{\Glv}[1][\ensuremath{\lambda}]{G_{#1}^\symbsh}
\newcommand*{\Gluv}[1][\ensuremath{\lambda}]{G_{#1}^\symbcouple}
\newcommand*{\Gtu}[1][\ensuremath{t}]{\Cal{G}_{#1}^\symbkpp}
\newcommand*{\Gtv}[1][\ensuremath{t}]{\Cal{G}_{#1}^\symbsh}
\newcommand*{\Gtuv}[1][\ensuremath{t}]{\Cal{G}_{#1}^\symbcouple}
\newcommand*{\Wu}{\Cal{W}^\symbkpp}
\newcommand*{\Wv}{\Cal{W}^\symbsh}
\newcommand*{\omegau}{\omega_\symbkpp}
\newcommand*{\omegav}{\omega_\symbsh}
\newcommand*{\ul}{\Rm{u,l}}
\newcommand*{\cc}{\ensuremath{c.c}\xspace}
\newtheorem{theorem}{Theorem}[section]
\newtheorem{corollary}[theorem]{Corollary}
\newtheorem{proposition}[theorem]{Proposition}
\newtheorem{lemma}[theorem]{Lemma}
\theoremstyle{definition}
\theoremstyle{remark}
\newtheorem{remark}[theorem]{Remark}
\title[Nonlinear convective stability of a bi-unstable front]{Nonlinear convective stability of a critical pulled front undergoing a Turing bifurcation at its back: a case study}
\author[L. Gar{\'e}naux]{
	Louis Gar{\'e}naux%
	\address{Institut de Math{\'e}matiques de Toulouse, UMR 5219, Universit{\'e} de Toulouse CNRS, UPS IMT, F-31062 Toulouse Cedex 9, France.}%
	\email{louis.garenaux@math.univ-toulouse.fr}%
}
\date{\today}
\subjclass{35K57, 35C07, 35B35, 35B36, 35Q56, 35K46}
\keywords{Reaction-diffusion, Propagating monostable front, Asymptotic stability, Turing instability, Weighted spaces, Point-wise resolvent estimate, Mode filters, Amplitude equation}
\begin{document}

\maketitle

\begin{abstract}
We investigate a specific reaction-diffusion system that admits a monostable pulled front propagating at constant critical speed. When a small parameter changes sign, the stable equilibrium behind the front destabilizes, due to essential spectrum crossing the imaginary axis, causing a Turing bifurcation. Despite both equilibrium states are unstable, the front continues to exist, and is shown to be asymptotically stable, against suitably-localized perturbations, with algebraic temporal decay rate $t^{-3/2}$. To obtain such decay, we rely on point-wise semigroup estimates, and show that the Turing pattern behind the front remains bounded in time, by use of mode-filters.
\end{abstract}



\tableofcontents

\section{Introduction}

Analysis of evolution problems in PDE often reveals a large variety of nontrivial dynamics. Among parabolic problems, the class of reaction-diffusion equations furnish a wide class of stable -- and thus observable -- solutions: planar waves such as propagating fronts or periodic patterns, rotating spirals, \emph{etc}.

We are interested here in a system coupling a Kolmogorov-Petrovski-Piskunov (\kpp) equation together with a Swift-Hohenberg (\sh) equation, We work with $x\in\R$ and $t>0$:
\begin{equation}
\label{e:original_system}
\left\lbrace
\begin{array}{l}
\partial_t u = d \partial_{xx} u + \alpha u(1-u^2) + \beta v,\\[0.5em]
\partial_t v = - (\partial_{xx} + 1)^2v + v(\mu - \sigma v^2) - \gamma v (1-u),
\end{array}
\right.
\end{equation}
with parameters $d$, $\alpha$, $\mu$, $\sigma$, $\gamma$ positive, and $\beta\in\R$ nonzero.
The scalar \kpp equation is a typical model for front propagation from a stable to an unstable state \cite{KPP_37, Fisher_37}:
\begin{equation}
\label{e:original_KPP}
\partial_t u = d \partial_{xx} u + f(u).
\end{equation}
The diffusion coefficient $d$ is positive and the reaction term $f\in \Cal{C}^2$ is of \kpp type: it admits two equilibrium states $f(0) = 0 = f(1)$ with distinct stability \wrt time $f'(0) > 0 > f'(1)$. Generally, a concavity hypothesis is added: $f''(u)<0$ for $u\in(0,1)$. Although $f(u) = \alpha u(1-u)$ is the canonical choice, we will work with $f(u) = \alpha u(1-u^2)$. All statements and proofs below adapt to the $u(1-u)$ case.

Equation \eqref{e:original_KPP} admits a family of fronts $(q_c)_{c>0}$ -- with $q_c$ propagating at constant speed $c$ -- that connect the stable equilibrium point ($q_c \to 1$ when $x\to-\infty$) to the unstable one ($q_c\to0$ when $x\to+\infty$) \cite{Aronson_Weinberger_78}. 
Supercritical fronts $c>c_*:= 2\sqrt{d\, f'(0)} = 2\sqrt{d \alpha}$ are \emph{convectively stable} with exponential decay in time, against sufficiently localized perturbations. Indeed, when set in a moving frame, conjugating the problem with spatial exponential weights allow to both stabilize the essential spectrum and erase the eigenvalue at $\lambda = 0$, creating a spectral gap at linear level \cite{Sattinger_76}.
Stability of the critical front $q_* := q_{c_*}$ is more involved, since the linear essential spectrum can only be \emph{marginally stabilized}: in the optimal exponentially weighted space, essential spectrum lies at left of the imaginary axis but touches it, due to the presence of absolute spectrum $\Set{-\xi^2 : \xi \in \R} \subset \C$ up to the origin. Last advances in this direction \cite{Gallay_94, Faye_Holzer_18, Avery_Scheel_21} state that $q_*$ is asymptotically stable with decay rate $t^{-3/2}$, and that this rate is optimal. Gallay used renormalization group method; Faye and Holzer relied on point-wise estimates for the resolvent of the full problem; Avery and Scheel reduced the problem to its asymptotic properties through far-field decomposition, and use resolvent estimates on each side.
For subcritical speeds $0 < c < c_*$, there still exists non-monotonic, homoclinic trajectories $q_c$ from $1$ to $0$. However, Sturm Liouville theory ensures the existence of unstable point spectrum, preventing stability.

The scalar SH equation is a simple model for the formation of periodic pattern \cite{Swift_Hohenberg_77, Cross-Hohenberg-93}:
\begin{equation}
\label{e:original_SH}
\partial_t v = - (\partial_{xx} + 1)^2v + \mu v - \sigma v^3.
\end{equation}
When the small parameter $\mu$ becomes positive, \eqref{e:original_SH} undergoes a supercritical Turing bifurcation: a curve of essential spectrum crosses the imaginary axis twice, for nonzero Fourier frequencies $\pm 1$. This causes the constant equilibrium state $u = 0$ to bifurcate into periodic profile of amplitude $\sqrt{\mu}$. It can be obtained through center manifold theory \cite{Eckmann-Wayne-91}.

For all values of $\mu$, the coupled system \eqref{e:original_system} admits $Q = \transpose{(q_*, 0)}$ as a front with propagation speed $c_*$. The linear coupling term $\beta v$ feeds the front dynamic with oscillatory pattern, while the nonlinear coupling term $\gamma v (1-u)$ stabilize at linear level ahead of the front. Thus oscillating profile are expected to appear only behind the front.

Multiple works about bistable front -- \ie a front that connects two stable states: $f(-1) = 0 = f(1)$ and $f'(-1) < 0$, $f'(1) < 0$ -- undergoing a Turing bifurcation have been done. 
When bifurcation occurs behind the front, \cite{Sandstede_Scheel_01} showed that no modulated front appears: the Turing pattern travels at slow speed $\Cal{O}(\sqrt{\mu})$, hence is left behind the front. However, the front survive after bifurcation, although through a non-modulated form. Its nonlinear stability is showed in \cite{Beck_Ghazaryan_Sandstede_09} for a general second-order setting.

When the state ahead of a bistable front bifurcates, \cite{Sandstede_Scheel_01} showed existence and linear stability of modulated fronts -- \ie coherent structures that link stable state behind to Turing pattern ahead -- in a general second-order setting. For a system that ressembles \eqref{e:original_system}, \cite{Gallay_Schneider_Uecker_04} obtain nonlinear stability of such structure. We also mention \cite{Ghazaryan_Sandstede_07} for a quadratic coupling $\beta v^2$ instead of a linear one. The signed term therein allows to obtain \emph{a priori} estimates by applying comparison principle to simplify the system.
When bifurcation occurs both ahead and behind a bistable front, spectral stability of either traveling or standing pulses is showed for a general system \cite{Sandstede_Scheel_PRSEA_00} and \cite{Sandstede_Scheel_Nonlinearity_05}, while  nonlinear stability of a traveling pulse for a precise system is showed in \cite{Gallay_Schneider_Uecker_04}.

For monostable \kpp fronts, previous works rather investigated the Turing bifurcation in a non-local \kpp context. \cite{Berestycki_Nadin_Perthame_Ryzhik_09} first investigate the existence of steady states and propagating fronts, together with the monotonicity of this fronts. A precise construction of modulated fronts is made in \cite{Faye_Holzer_15}, while global properties are obtained in \cite{Hamel_Ryzhick_14}: boundedness of solutions when initial condition is non-positive, together with bounds on propagation speeds.
It seems that the question of the stability of such structure is open, as remarked in \cite{Nadin_Perthame_Tang-11, Hamel_Ryzhick_14}. Our conclusion is that Turing bifurcation behave the same when it occurs behind a monostable or a bistable front: see \cref{f:graphe_solution} for a view on a typical solution. However, the spectrum in the present situation is quite different. On one hand, the essential spectrum is unstable, which require to work in exponentially weighted spaces. Even so, the optimal exponential weight only allows to marginally stabilize the spectrum. On the other hand, the translation eigenvalue $0\in\C$ in the essential spectrum does not contribute to a zero of the so-called Evans function, due to the corresponding eigenfunction $q_*'$ having only weak exponential decay: $q_*'(x) \sim (a x + b)e^{-\frac{c_*}{2} x}$ when $x\to +\infty$. This removes the technical aspect of stability up to a shift in space, also referred to as orbital stability or modulation. 

\begin{figure}[t!]
\includegraphics{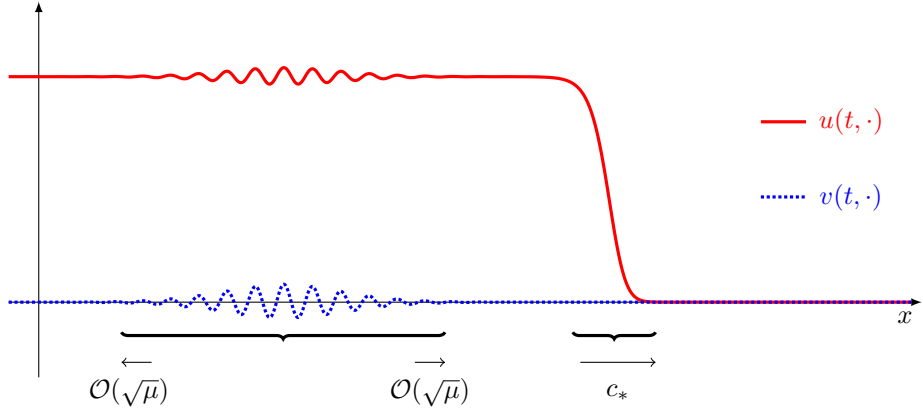}
\caption{Illustration of a typical solution $(u,v)$, for $\mu$ positive and small. The front -- right brace -- has fix shape and moves to the right at constant speed $c_*$. The Turing pattern -- left brace -- continuously expands at slow speed $\Cal{O}(\sqrt{\mu})$, towards left and right. Both states $(0,0)$ and $(1,0)$ -- respectively ahead  and behind the front -- are pointwise unstable, but convectively stable in the critical weighted space.}
\label{f:graphe_solution}
\end{figure}

Let us emphasize that the coupling at linear level $\beta v$ forces us to work with the same weights on the \kpp and \sh components, see further \cref{r:distinct_weights}. 
This typically happens at $+\infty$: conjugation by the critical \kpp weight, although necessary to stabilize the \kpp spectrum, may destabilize the \sh part of the spectrum, if general parameters $\alpha$, $d$, $\gamma$ are set. We counter this effect by imposing a lower bound on $\gamma$, which allows to stabilize the \sh spectrum at $+\infty$.

When no such parameter is available, one has to work with unstable spectrum in any weighted space. This is referred to as \emph{remnant} instability in \cite{Faye_Holzer_Scheel_Siemer_20}, where this exact situation is investigated. Using a precise decomposition of perturbations, they conclude on nonlinear stability of the front $Q_*$ for $\mu < 0$. Although we could have followed this direction in our situation, we prefer to set $\gamma$ large enough so that computations are lightened.

In the supercritical case $c > c_*$, it is possible to obtain spectral gap for the essential spectrum, and to remove the translation eigenvalue, by use of a strong enough weight \cite{Sattinger_76}. In such case, the situation is simpler than bistable case: exponential stability without a phase can be obtained by adapting the current proofs or the ones of \cite{Beck_Ghazaryan_Sandstede_09}. If one rather use the optimal supercritical weight, the situation has to be discussed. 

In the monostable critical case, instability ahead of the front is still an open problem, and would be an interesting direction to follow. For example, one can replace the coupling term $\gamma v(1-u)$ by $\gamma uv$. We point out that a modulated front for such system is necessarily non-positive, which prevent the use of a \kpp reaction term $u(1-u)$. To ensure existence of solutions globally in time, it is possible to rather study a monostable front emanating from a bistable reaction term $u(1-u^2)$.

\section{Main result}

\subsection{Notations}
We investigate now the stability of the traveling front $Q_* = \transpose{(q_*, 0)}$. Insert the following ansatz in \eqref{e:original_system}:
\begin{equation*}
\begin{pmatrix} u(t, x) \\ v(t,x) \end{pmatrix} 
= Q(\tilde{x}) + P(t, \tilde{x}),
\end{equation*}
with $\tilde{x} = x - c_*t$. Then the perturbation $P = \transpose{(p_1, p_2)}$ satisfies 
\begin{equation*}
\label{e:system_P}
\partial_t P = \Cal{A} P + N(P) := \begin{pmatrix}
\Au & \beta \\
0 & \Av
\end{pmatrix} 
P + 
\begin{pmatrix}
N_1(P) \\ N_2(P)
\end{pmatrix},
\end{equation*}
where the linear operator $\Cal{A}: L^2(\R)^2 \to L^2(\R)^2$ is closed, has dense domain $H^2(\R) \times H^4(\R)$ and is defined by:
\begin{equation}
\label{e:def_linear}
\Cal{A}^\symbkpp = d\partial_{xx} + c_*\partial_x + \alpha(1 - 3q_*(\tilde{x})), 
\hspace{2em}
\Cal{A}^\symbsh = -(1+\partial_{xx})^2 +c_* \partial_x + \mu - \gamma (1-q_*(\tilde{x})),
\end{equation}
while remaining nonlinear terms express as:
\begin{equation}
\label{e:def_nonlinear}
N_1(P) = -\alpha(3q_*(\tilde{x}) {p_1}^2 + {p_1}^3), 
\hspace{4em}
N_2(P) = \gamma p_1 p_2 -\sigma {p_2}^3.
\end{equation}
The essential spectrum of $\Cal{A}$ is unstable, see \cref{f:spectre}, we do not hope for an asymptotic stability result against general perturbation $P \in H^2(\R) \times H^4(\R)$. Instead, we restrict to weighted perturbations $P(t,\tilde{x}) = \omega(\tilde{x}) U(t, \tilde{x})$, with $U\in H^2(\R)\times H^4(\R)$. As we shall see in further \cref{r:distinct_weights}, the coupling term $\beta v$ imposes us to use scalar weight. Namely $\omega_* := \omegau \omegav$, with smooth positive functions
\begin{equation}
\label{e:def-weights}
\omegau(x) := 
\begin{cases}
1 & \text{if } x \leq -1,\\
e^{-\frac{c_*}{2d} x} & \text{if } x \geq 1,
\end{cases}
\hspace{4em}
\omegav(x) := 
\begin{cases}
e^{\theta x} & \text{if } x \leq -1,\\
1 & \text{if } x \geq 1,
\end{cases}
\end{equation}
where the exponent $\theta<0$ is small, see \cref{p:essential_spectrum}.
The weight $\omegau$ acts only at $+\infty$, and allows to marginally stabilize the \kpp spectrum, while $\omegav$ acts at $-\infty$ and stabilizes the \sh spectrum.
Hence, we insert the new ansatz 
\begin{equation}
\label{e:ansatz_perturbation}
\begin{pmatrix} u(t, x) \\ v(t,x) \end{pmatrix} 
= Q(\tilde{x}) + \omega_*(\tilde{x}) U(t, \tilde{x})
\end{equation}
in \eqref{e:original_system} to obtain that $U = \transpose{(u_1, u_2)}$ satisfies:
\begin{equation}
\label{e:system_U}
\partial_t U = \Cal{L} U + \Cal{N}(U) := 
\begin{pmatrix}
\Lu & \beta \\
0 & \Lv
\end{pmatrix} 
U + 
\begin{pmatrix}
\Cal{N}_1(U) \\ \Cal{N}_2(U)
\end{pmatrix},
\end{equation}
where all linear terms are closed, densely defined operators. They are obtained as conjugation of the unweighted linear operators, 
\begin{equation*}
\Cal{L}^i = \omega_*^{-1} \Cal{A}^i \omega_*
\end{equation*} 
when $i\in \lbrace \symbkpp, \symbsh \rbrace$ and nonlinear terms express according to $\Cal{N}_i(U) = \omega_*^{-1} N_i(\omega_* U)$.

\begin{figure}[t!]
\centering
\includegraphics[scale = 0.65]{\Imagepath{spectre_Au-Av}}
\includegraphics[scale = 0.65]{\Imagepath{spectre_Lu-Lv}}
\includegraphics[scale = 0.65]{\Imagepath{spectre_T_minus}}
\caption{Essential spectrum of linear operators. Fredholm borders corresponding to \kpp (respectively \sh) component are plain (respectively dashed). Curves in red with single arrow (respectively blue with double arrow) correspond to $+\infty$ (respectively $-\infty$) borders. 
Left: $\spectre[ess](\Cal{A})$ with two unstable curves, $\spectre(\Av[, +])$ is out of picture to the left. Center: $\spectre[ess](\Cal{L})$ with only marginally stable spectrum. Right: $\spectre[ess](\Cal{T}^- + c_*\partial_x)$ with one unstable curve, which corresponds to Turing instability. The essential spectrum of $\Cal{T}^-$ is real and obtained from the one represented by application of $z\mapsto \real{z}$. Notice that the spectrum of $\Cal{T}$ can not be marginally stabilized, due to the lack of a first order derivative.}
\label{f:spectre}
\end{figure}

Since the translation eigenmode $q_*'$ in weighted spaces is unbounded at $x\to +\infty$, we expect $U$ to decay in time with rate $t^{-3/2}$ if the following assumption of a marginally stable essential spectrum holds, see \cite{Avery_Scheel_21, Faye_Holzer_18}:
\begin{equation*}
\tag{\ensuremath{H_1}}
\label{e:hyp_marginal_stability}
\spectre[ess](\Cal{L}) \subset \Set{\real{\lambda} \leq -\eta} \cup \Set{-\xi^2 : \xi \in\R}.
\end{equation*} 
However at nonlinear level, it appears we also need to study the dynamic near the steady state $(u,v) = (1, 0)$ undergoing a Turing bifurcation. This is classic for such problem and was already done in \cite{Beck_Ghazaryan_Sandstede_09}. More details are given in \cref{ss:sketch_of_proof}. We do so with the second ansatz
\begin{equation}
\label{e:ansatz_V}
\begin{pmatrix} u(t,x) \\ v(t,x) \end{pmatrix}
= Q(\tilde{x}) + \rho_*(\tilde{x})\omegau(\tilde{x}) \, V(t,x).
\end{equation}
Here again $\tilde{x} = x- c_*t$, and the polynomial weight $\rho_*$ is a smooth positive function defined by
\begin{equation}
\label{e:def_algebraic_weight}
\rho_*(x) := 
\begin{cases}
1 & \text{if } x\leq -1,\\
\pscal{x} := \sqrt{1 + x^2} & \text{if } x\geq 1.
\end{cases}
\end{equation}
We note $\varpi := \rho_* \omegau$, and insert \eqref{e:ansatz_V} into \eqref{e:original_system} to obtain that
\begin{equation*}
\varpi \partial_t V - c_* (\partial_x \varpi) V = \Cal{A} (\varpi V) - c_* \partial_x (\varpi V)+ N(\varpi V),
\end{equation*}
which rewrites as
\begin{align}
\nonumber 
\partial_t V & = \Cal{T} V + \Cal{Q}(V), \\
\label{e:system_V}
& = \Cal{T}^- V + \Cal{Q}^-(V) + \Cal{S}(\tilde{x}, V),
\end{align}
where linear operators express as:
\begin{equation*}
\Cal{T} := \varpi(\tilde{x})^{-1}(\Cal{A} - c_* \partial_x) \varpi(\tilde{x}) +  c_* \frac{\partial_x \varpi}{\varpi} (\tilde{x})
\hspace{4em}
\Cal{T}^- = \lim_{\tilde{x}\to -\infty} \Cal{T}(\tilde{x}),
\end{equation*}
while nonlinear terms are defined by $\Cal{Q}(V) = \varpi(\tilde{x})^{-1} N(\varpi(\tilde{x}) V)$, and $\Cal{Q}^-(V) = \lim_{\tilde{x} \to -\infty} \Cal{Q}(V)$. We are left with an error term 
\begin{equation*}
\Cal{S}(V) := \Cal{T}V - \Cal{T}^- V + \Cal{Q}(V) - \Cal{Q}^-(V).
\end{equation*}
Since $\varpi(\tilde{x}) = 1$ for $\tilde{x}\leq -1$, we decompose $\Cal{T}^-$ as:
\begin{equation}
\label{e:def_T_minus}
\Cal{T}^- = \begin{pmatrix}
d\partial_{xx} - 2\alpha & \beta \\
0 & -(1+\partial_{xx})^2 
\end{pmatrix}
+ \begin{pmatrix}
0 & 0 \\
0 & \mu
\end{pmatrix}
=: \Cal{T}_0 + \Cal{T}_\mu.
\end{equation}
and $\Cal{Q}^-$ as
\begin{equation*}
\Cal{Q}^-(V) = \begin{pmatrix}
- 3\alpha {v_1}^2 \\
\gamma {v_1}{v_2}
\end{pmatrix}
+ 
\begin{pmatrix}
-\alpha {v_1}^3 \\
-\sigma {v_2}^3
\end{pmatrix} =: \Cal{Q}_2 + \Cal{Q}_3.
\end{equation*}
Thus we get the following expression for the source term:
\begin{align}
\label{e:rest-term}
\Cal{S}(\cdot, V) = {} & \left[\varpi^{-1} (\Cal{A} - c_* \partial_x) \varpi - (\Cal{A} -c_*\partial_x)   + c_* \frac{\partial_x \varpi}{\varpi} + 
\begin{pmatrix}
3\alpha (1 - q_*^2) & 0\\
0 & -\gamma (1 - q_*)
\end{pmatrix}
\right] V \\
& {} + (\varpi - 1)
\begin{pmatrix}
- 3\alpha q_* {v_1}^2 \\
\nonumber
\gamma {v_1}{v_2}
\end{pmatrix}
+ (\varpi^2 - 1)
\begin{pmatrix}
-\alpha {v_1}^3 \\ -\sigma {v_2}^3
\end{pmatrix}.
\end{align}
To control perturbation $V$, we will compute an amplitude equation, which reveals to be a Ginzburg-Landau equation with real coefficients
\begin{equation}
\tag{\gl}
\label{e:GL}
\partial_T A = 4\partial_{XX} A + A + b A\absolu{A}^2.
\end{equation}
An important property for our result to hold is that the Turing bifurcation is \emph{supercritical}:
\begin{equation*}
\label{e:hyp_Turing_surcritique}
\tag{\ensuremath{H_2}}
\text{In \eqref{e:GL}, the coefficient $b$ is negative.}
\end{equation*}

We emphasize that the perturbation $V$ is not traveling  in the laboratory frame $x$, but that we measure it using a weight that follows the front. Despite $V$ is measured in the steady frame $x$, 
it would be possible to replace ansatz \eqref{e:ansatz_V} by $\transpose{(u,v)} = Q(\tilde{x}) + \rho_*(\tilde{x}) \omegau(\tilde{x}) V(t, \tilde{x})$. The actual choice lighten the computation to  obtain the \gl equation.
Conjugation by $\varpi := \rho_* \omegau$ is needed so that \cref{p:decay_rest_term} holds: this weight is natural for \kpp equation, in the sense that its spatial decay at $+\infty$ copy the one of the translation eigenfunction $q_*'$ \cite{Faye_Holzer_18}.

The perturbations $U$ and $V$ are linked through
\begin{equation}
\label{e:lien_U-V}
V(t, x) = \frac{\omegav(\tilde{x})}{\rho_*(\tilde{x})} \ U(t, \tilde{x}).
\end{equation}
To measure space localization and regularity, we use Sobolev spaces, and introduce the following norm:
\begin{equation*}
\norme{U}_X := \norme{U}_{W^{2, \infty}(\R) \times W^{4, \infty}(\R)} + \norme{\rho_*^3 U}_{L^\infty(\R)}.
\end{equation*}
We can now state our main result.
\begin{theorem}
\label{t:main_result}
There exists an open, nonempty set of parameters $\Omega$ such that for $(\alpha, \beta, d, \sigma, \gamma)$ in $\Omega$, both hypothesis \eqref{e:hyp_marginal_stability} and \eqref{e:hyp_Turing_surcritique} holds. For such a choice of parameters, the following holds. There exists positive constants $C$, $\mu_0$, and $\delta$ such that for all $0 < \mu < \mu_0$, if $U_0$, $V_0$ satisfies
\begin{equation*}
K_0 := \frac{1}{\sqrt{\mu}}\left(\norme{V_0}_{W^{1,\infty}(\R)} + \norme{U_0}_X\right) \leq \delta,
\end{equation*}
then the solution of \eqref{e:original_system} with initial condition $Q + \omegau \omegav U_0 = Q + \rho_*\omegau V_0$ exists for all time, and writes as $Q(\tilde{x}) + \omegau(\tilde{x}) \omegav (\tilde{x}) U(t, \tilde{x}) = Q(\tilde{x}) + \rho_*(\tilde{x})\omegau(\tilde{x}) V(t, x)$. For all $t>0$, it satisfies:
\begin{equation*}
\Norme{\frac{U(t, \cdot)}{\rho_*}}_{L^\infty(\R)} \leq C\frac{\norme{U_0 \rho_*^3}_{L^\infty(\R)}}{(1+t)^{3/2}},
\hspace{4em}
\Norme{V(t, \cdot)}_{L^\infty(\R)} \leq C \sqrt{\mu}\, (1 + K_0 e^{-t/\mu}).
\end{equation*}
\end{theorem}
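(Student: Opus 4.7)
The plan is to combine a pointwise semigroup estimate on the front perturbation $U$ (which sits in the moving frame and should decay at the diffusive rate $t^{-3/2}$) with a mode-filter / amplitude-equation analysis of the Turing perturbation $V$ (which sits in the laboratory frame and must only stay bounded at amplitude $\sqrt\mu$), and then close a bootstrap on the two representations simultaneously, linked through \eqref{e:lien_U-V}. Non-emptiness and openness of $\Omega$ is a preliminary verification: by \eqref{e:def_linear} the Fredholm borders of $\Cal{L}^\symbkpp_\pm$ and $\Cal{L}^\symbsh_\pm$ are explicit polynomials in the Fourier parameter whose coefficients depend on $(\alpha,\beta,d,\gamma,\theta)$, so \eqref{e:hyp_marginal_stability} reduces to finitely many sign conditions; the coefficient $b$ in \eqref{e:GL} is obtained by a standard centre-manifold / normal-form computation near the critical modes $\pm 1$ of $\Cal{T}^-$ and is a rational function of $(\alpha,\beta,\sigma,\gamma)$. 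Exhibiting one admissible tuple with $\beta$ small and $\gamma$ large (consistent with \cref{r:distinct_weights}) and invoking continuity produces the open set.

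For the linear $U$-problem I would construct $e^{t\Cal{L}}$ via an inverse Laplace transform along a contour hugging the marginal branch $\{-\xi^2\}$ allowed by \eqref{e:hyp_marginal_stability}. Following the pointwise strategy of \cite{Faye_Holzer_18}, the resolvent $(\Cal{L}-\lambda)^{-1}$ is built by a far-field / exponential-dichotomy construction, with kernel bounds uniform up to the branch point $\lambda=0$; the square-root singularity at the origin together with the polynomial weight $\rho_*^3$ hardwired into $\norme{\cdot}_X$ integrates to
\[
\Norme{e^{t\Cal{L}} U_0 / \rho_*}_{L^\infty(\R)} \leq \frac{C}{(1+t)^{3/2}} \norme{U_0 \rho_*^3}_{L^\infty(\R)}.
\]
The coupling being triangular, the semigroup splits in block-triangular form, and the $\Lv$ diagonal contribution is absorbed provided $\gamma$ is taken large enough so that the $+\infty$ \sh Fredholm border is strictly stable in the weighted space.

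The linear $V$-problem \eqref{e:system_V} is analysed in the laboratory frame through $\Cal{T}^-$ from \eqref{e:def_T_minus}, whose essential spectrum becomes tangent to the imaginary axis at Fourier wave-numbers $\pm 1$ when $\mu$ crosses zero. Introducing mode filters $\vPic, \vPis$ isolating an $O(\mu^{1/4})$ band around these modes, I would write $V = \vPic V + \vPis V$ and reconstruct $\vPic V$ from a complex amplitude $A(T,X)$ governed, up to $O(\mu^{3/2})$ residuals, by \eqref{e:GL}. Hypothesis \eqref{e:hyp_Turing_surcritique}, $b<0$, ensures global boundedness of $A$ for bounded data, yielding an $L^\infty$ bound of size $\sqrt\mu$ on $\vPic V$; the complementary component $\vPis V$ enjoys a uniform spectral gap and contributes the exponentially decaying transient $K_0 e^{-t/\mu}$. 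This is the blueprint underlying \cite{Beck_Ghazaryan_Sandstede_09}, which one must adapt here because the front-localised source term $\Cal{S}$ in \eqref{e:rest-term} is no longer trivial in the monostable setting.

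The main obstruction — and the reason both representations are kept in parallel — is the nonlinear closure. Setting
\[
M_1(T) = \sup_{0\leq t\leq T} (1+t)^{3/2} \Norme{U(t, \cdot)/\rho_*}_{L^\infty(\R)}, \qquad M_2(T) = \sup_{0\leq t\leq T} \frac{\norme{V(t,\cdot)}_{L^\infty(\R)}}{\sqrt\mu},
\]
and applying Duhamel to \eqref{e:system_U} and \eqref{e:system_V}, the quadratic and cubic pieces of $\Cal{N}$ and $\Cal{Q}^-$ are routine to bound by $M_1$ and $M_2$ using the linear estimates above. The delicate point is the source term $\Cal{S}$ from \eqref{e:rest-term}: it is front-localised through $1-q_*^2$, $1-q_*$ and $\varpi-1$, but it acts on $V$, whose laboratory-frame norm does not decay. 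The link \eqref{e:lien_U-V} allows one to re-express the front-localised part of $V$ in terms of $U$, trading the stationary $O(\sqrt\mu)$ bound for the sharp algebraic decay $t^{-3/2}$; the spatial decay of $q_*'$ then makes $\Cal{S}$ integrable in time. Once this trade-off is quantified in the spirit of \cref{p:decay_rest_term}, a standard continuity argument closes the estimates on $M_1$ and $M_2$ for $\delta,\mu_0$ small, giving global existence together with the two advertised decay rates.
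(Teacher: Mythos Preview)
Your proposal is correct and follows essentially the same architecture as the paper: pointwise resolvent estimates \`a la \cite{Faye_Holzer_18} for the $U$-dynamics, mode-filter/Ginzburg--Landau analysis \`a la \cite{Schneider_94_juil,Beck_Ghazaryan_Sandstede_09} for the $V$-dynamics, the trade-off of the front-localised source $\Cal{S}$ against $U/\rho_*$ via \eqref{e:lien_U-V}, and a bootstrap to close.

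Two structural points, however, are handled more carefully in the paper than in your sketch and deserve emphasis. First, the bootstrap is not symmetric in $M_1,M_2$: the paper packages the two directions as \cref{p:V_implies_U} ($V$ bounded $\Rightarrow$ $U$ decays) and \cref{p:U_implies_V} ($U$ decays $\Rightarrow$ $V$ bounded), and the crucial fact is that the constant $C_\symbstab$ in the first implication is \emph{independent} of the size of the $V$-bound, once that bound is below a fixed threshold $\delta$. This comes from writing $\Cal{N}(U)$ as $(\omegav U)\cdot(\omegau U)$, absorbing the first factor into $\norme{V}_{L^\infty}\leq\delta$, and treating the nonlinearity as effectively linear in $U$; a naive joint continuity argument on $(M_1,M_2)$ would not close because the $V$-dynamics is genuinely nonlinear (the cubic term is what saturates the Turing instability). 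Second, the mode filters in the paper have \emph{fixed} support independent of $\mu$ (see \eqref{e:cut-off}), not an $O(\mu^{1/4})$ band; this is what gives a $\mu$-uniform spectral gap on $\vPis$ (\cref{l:linear_behavior_V}), and the exponential transient $K_0 e^{-t/\mu}$ in the $V$-bound actually arises from the decay of the \gl solution toward its global attractor (\cref{l:GL-atractor}) after rescaling $T=\mu t$, not from the stable mode filter.
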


It happens we can refine the decay of perturbation $U$.

\begin{corollary}
Let $1 < p \leq +\infty$ and $(\alpha, \beta, d, \sigma, \gamma) \in \Omega$, with $\Omega$ as in \cref{t:main_result}. There exists positive constants $C$, $\mu_0$, $\delta$ and $\eta$ such that if 
\begin{equation*}
K_0 \leq \delta, 
\hspace{4em}
\norme{U_0 \rho_*^3}_{L^p(\R)} \leq \delta, 
\end{equation*}
then the solution $U = \transpose{(u_1, u_2)}$ of \eqref{e:original_system} with initial condition $U_0$ satisfies:
\begin{equation*}
\Norme{\frac{u_1(t, \cdot)}{\rho_*}}_{L^p(\R)} \leq C\frac{\norme{U_0 \rho_*^3}_{L^p(\R)}}{(1+t)^{\frac{3}{2} - \frac{1}{2p}}}, 
\hspace{4em}
\norme{u_2(t, \cdot)}_{L^p(\R)} \leq C e^{-\eta t}  \norme{U_0}_{L^p(\R)}.
\end{equation*}
\end{corollary}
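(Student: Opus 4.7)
The plan is to re-run the Duhamel iteration of \cref{t:main_result} in an $L^p$-based function space, recycling the pointwise Green kernel estimates that already underlie its proof. The upper-triangular structure of $\Cal{L}$ in \eqref{e:system_U} decouples the argument nicely: the $u_2$ component obeys a closed equation, and $\beta u_2$ then enters the $u_1$ equation merely as an exponentially decaying source.

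For $u_1$, the pointwise Green kernel bound for $e^{t\Lu}$ that was built during the proof of \cref{t:main_result} has the schematic form
\begin{equation*}
\absolu{G_t^{\symbkpp}(x,y)} \leq \frac{C \, \rho_*(y)^3}{\rho_*(x)\,(1+t)^{3/2}} \, \Phi(t, x-y),
\end{equation*}
where $\Phi(t,\cdot)$ is a Gaussian-type factor of width $\sqrt{t}$, satisfying $\norme{\Phi(t,\cdot)}_{L^q(\R)} \leq C(1+t)^{\frac{1}{2}(\frac{1}{q}-1)}$ for all $q\in[1,\infty]$. Feeding this into Young's convolution inequality $L^1 \ast L^p \hookrightarrow L^p$, and keeping track of the $(1+t)^{1/(2p)}$ gain obtained by interpolating $\Phi$ between $L^1$ and $L^\infty$, yields the sharper linear bound
\begin{equation*}
\Norme{\frac{e^{t\Lu} f}{\rho_*}}_{L^p(\R)} \leq \frac{C}{(1+t)^{3/2 - 1/(2p)}} \,\norme{f\rho_*^3}_{L^p(\R)}, \qquad 1<p\leq \infty.
\end{equation*}
A standard Duhamel fixed-point argument then lifts this into the announced nonlinear decay: the quadratic $q_* u_1^2$ and cubic $\omega_* u_1^3$ terms are handled by putting one factor in $L^\infty$ via the already-established $(1+t)^{-3/2}$ decay of $u_1/\rho_*$ from \cref{t:main_result} and the remaining factors in the new $L^p$ norm, while the $\beta u_2$ coupling is absorbed thanks to its exponential smallness, obtained in the next step.

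For $u_2$, assumption \eqref{e:hyp_marginal_stability} together with the choice $\theta<0$ in \eqref{e:def-weights} ensures that the essential spectrum of $\Lv$ lies in $\Set{\real\lambda\leq-\eta}$ for some $\eta>0$, and the proof of \cref{t:main_result} also rules out unstable point spectrum for $\Lv$. Standard resolvent-to-semigroup estimates then give $\norme{e^{t\Lv}g}_{L^p(\R)} \leq C e^{-\eta t} \norme{g}_{L^p(\R)}$ uniformly in $p\in[1,\infty]$. The nonlinearity $\Cal{N}_2 = \gamma u_1 u_2 - \sigma u_2^3$ is at least quadratic in the small perturbation $(u_1,u_2)$; combined with the uniform $L^\infty$ smallness of $u_2$ and the polynomial decay of $u_1/\rho_*$ from \cref{t:main_result}, the Duhamel integral of $\Cal{N}_2$ is controlled by Hölder, and a Gronwall-type argument closes the exponential $L^p$ bound.

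The main obstacle is bookkeeping the interaction between the polynomial weight $\rho_*$ and the $L^p$ norm inside the nonlinear iteration: one must check that the Young convolution gain $(1+t)^{1/(2p)}$ survives the passage through the superlinear source terms uniformly in $p\in(1,\infty]$, and that the weighted mismatch between $u_1$ (measured against $\rho_*$) and $u_2$ (measured in unweighted $L^p$) is compensated in the cross term $\gamma u_1 u_2$ -- placing $u_1/\rho_*$ in $L^\infty$ and absorbing the extra $\rho_*$ into the already exponential decay of $u_2$ resolves the mismatch without spoiling the bootstrap.
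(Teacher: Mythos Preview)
The paper's proof is a one-liner: \cref{t:main_result} already gives $\norme{V(t)}_{L^\infty(\R)}\le C\sqrt\mu(1+K_0)$ for all $t\ge 0$, so one simply invokes \cref{p:V_implies_U} (stated and proved for general $1<p\le+\infty$; see \cref{p:asymptotic_stability_u}) with $t_V=+\infty$. Your proposal essentially re-derives that proposition, which is fine in spirit, and your linear estimates match \eqref{e:linear_estimate_v} and \eqref{e:linear_estimate_u-1}.

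There is, however, a real gap in how you handle the nonlinearities. You write the quadratic term in $\Cal N_1$ as ``$q_*u_1^2$'', but in fact
\[
\Cal N_1(U)=-\alpha\bigl(3q_*\,\omega_*\,u_1^2+\omega_*^2\,u_1^3\bigr),\qquad
\Cal N_2(U)=\gamma\,\omega_*\,u_1u_2-\sigma\,\omega_*^2\,u_2^3,
\]
with $\omega_*=\omegau\omegav$ and $\omegav(x)=e^{\theta x}\to+\infty$ as $x\to-\infty$. Putting a factor of $u_1/\rho_*$ into $L^\infty$ via the $(1+t)^{-3/2}$ decay does \emph{not} absorb this diverging $\omegav$: at $-\infty$ one has $\rho_*=\omegau=1$, so nothing compensates $\omegav$. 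Your remark about ``absorbing the extra $\rho_*$ into the exponential decay of $u_2$'' in the cross term likewise misses the point---the obstruction is $\omegav$, not $\rho_*$.

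The correct input, flagged as the key ingredient in the proof of \cref{p:asymptotic_stability_u}, is the $L^\infty$ bound on $V=\frac{\omegav}{\rho_*}U$ also supplied by \cref{t:main_result}. It gives $\absolu{\omegav u_i}\le\rho_*\norme{V}_{L^\infty}\le\rho_*\,C_\mu$, which \emph{linearizes} the nonlinear terms: $\absolu{\Cal N_i(U)}\le C_\mu\,\absolu{\omegau u_i}$. Since $\omegau$ is bounded and decays exponentially at $+\infty$, it then furnishes the $\rho_*^{-3}$ needed to feed into \eqref{e:linear_estimate_u-1} and close the $L^p$ bootstrap with a constant independent of $C_\mu$.
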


\subsection{Sketch of the proof}
\label{ss:sketch_of_proof}
We informally present the main ideas of our proof.

First, the dynamic for fully weighted perturbation $U$ writes as:
\begin{equation*}
\partial_t U = \Cal{L} U + \Cal{N}(U).
\end{equation*}
Where the linear operator $\Cal{L}$ has marginally stable spectrum, similarly to the \kpp equation, and the nonlinear part $\Cal{N}$ is unbounded \wrt to $x\in\R$. At linear level, we expect an algebraic decay: $t^{-3/2}$. We follow the approach from \cite{Faye_Holzer_18}: we first look at the solution to the linear Cauchy problem
\begin{equation}
\label{e:linear_cauchy_problem}
\partial_t p = \Cal{L} p, 
\hspace{4em}
p(0, \cdot) = p_0,
\end{equation}
and assume that it is expressed through a kernel: $p(t, x) = \int_\R \Cal{G}_t(x,y) p_0(y) \d y$. The matrix valued function $\Cal{G}_\cdot(\cdot, y)$ has to satisfy the linear problem \eqref{e:linear_cauchy_problem}, with a Dirac delta initial condition: $\Cal{G}_0 (x, y) = \delta_y(x) \begin{pmatrix}1 & 0\\ 0 & 1\end{pmatrix}$.
Remark that $\Cal{G}$ is an upper-triangular matrix, due to the triangular structure of $\Cal{L}$ and $\Cal{G}_0$.
We note it
\begin{equation*}
\Cal{G} = \begin{pmatrix}
\Gtu[] & \Gtuv[] \\
0 & \Gtv[]
\end{pmatrix},
\end{equation*}
where $\Cal{G}^i$ solves $\partial_t \Cal{G}_t = \Cal{L}^i \Cal{G}_t$ when $i\in\Set{\symbkpp, \symbsh}$, and $\Gtuv$ accounts for the coupling terms.
We then apply Laplace transform to obtain the spectral Green kernel $G_\lambda(x,y) = \int_0^{+\infty} e^{-\lambda t} \Cal{G}_t(x,y) \d t$, that satisfies the \emph{fundamental eigenproblem}
\begin{equation*}
(\lambda - \Cal{L}) G_\lambda = \delta_y,
\end{equation*}
for suitable $\lambda\in\C$. As above, it is an upper-triangular matrix that writes
\begin{equation*}
G = \begin{pmatrix}
\Glu[] & \Gluv[] \\
0 & \Glv[]
\end{pmatrix},
\end{equation*}
where each $G^i$ is the Laplace transform of $\Cal{G}^i$ for $i\in\Set{\symbkpp, \symbsh, \symbcouple}$. Hence, they satisfy
\begin{equation*}
(\lambda - \Lu) \Glu(\cdot, y) = \delta_y, \hspace{4em}
(\lambda - \Lv) \Glv(\cdot, y) = \delta_y,
\end{equation*}
\begin{equation*}
(\lambda - \Lu) \Gluv(\cdot, y) = \beta \Glv(\cdot, y).
\end{equation*}
The homogeneous eigenproblems $(\lambda - \Cal{L}^i) \phi = 0$ with $i\in\Set{\symbkpp, \symbsh}$ are ODEs, their solutions admit exponential behaviors at $\pm\infty$:
\begin{equation*}
\phi(x) = e^{\nu(\lambda) x} (1 + e^{-r\absolu{x}} \kappa(x)),
\end{equation*}
where $\nu\in\C$ is a solution of the dispersion relations $\lambda - \Cal{L}^{i, \pm}(\nu) = 0$; the real $r>0$ does not depend on $\nu$ nor $\lambda$; $\kappa$ is a bounded function on the half-line $\R_\pm$.
Such solutions can be concatenated to construct $G_\lambda^i(\cdot, y) \in L^2(\R)$. 
Then, the coupled spectral green function expresses as the $L^2(\R)$-inner product
\begin{equation*}
\Gluv(x,y) = \pscal{\Glu(x, \cdot) , \beta \Glv(\cdot, y)}.
\end{equation*}
This leads to spatial localization of $G_\lambda$, which is converted into temporal decay for $\Cal{G}_t$ through the inverse Laplace transform:
\begin{equation*}
\Cal{G}_t(x, y) = \frac{1}{2i\pi} \int_\Lambda e^{\lambda t} G_\lambda(x,y) \d \lambda,
\end{equation*}
where $\Lambda$ is a contour in the resolvent set $\C\backslash\spectre(\Cal{L})$, that can be chosen as a continuous deformation of a sectorial contour, see \cite[section 3]{Davies_2002}. 
To control the full non-linear dynamic, we then use Duhamel's formula:
\begin{equation*}
U(t,x) = \int_\R \Cal{G}_t(x,y) U_0(y) \d y + \int_0^t \int_\R \Cal{G}_{t-\tau} (x,y) \Cal{N}(U(\tau, y)) \d y\d \tau.
\end{equation*}
The fact that $\Cal{N}(U)$ is unbounded\footnote{Due to $\absolu{\omegav(x)} \to \infty$ when $x\to -\infty$} is a major issue. In \cite{Beck_Ghazaryan_Sandstede_09}, it is absorbed by transforming nonlinear terms into linear ones: writing $\omegav^{-1}\left(\omegav u\right)^p = (\omegav u)^{p-1} u$ and showing that $\norme{\omegav u}_{L^\infty(\R)}$ is bounded \wrt time. We follow the same line.
The material presented above is detailed in \cref{s:decay_U}, and leads to the following proposition, the proof of which can be found in \cref{ss:non_linear_U}.
\begin{proposition}[$V$ bounded implies decay of $U$]
\label{p:V_implies_U}
Assume hypothesis \eqref{e:hyp_marginal_stability} holds.
There exists positive constants $\delta$, $\mu_0$, $C_\symbstab$ and $\eta$ such that for all $0 < \mu < \mu_0$, the following holds. Fix $C$ and $t_V$ positive constants, an initial condition $U_0$ such that $U_0 \rho_*^3 \in L^p(\R)$, and assume that for all $0 \leq t \leq t_V$,
\begin{equation*}
\norme{V(t)}_{L^\infty(\R)} \leq C \leq \delta.
\end{equation*} 
Then the solution $U$ for the Cauchy problem \eqref{e:system_U} with initial condition $U_0$ is defined for all $0 \leq t\leq T$, and satisfies
\begin{equation*}
\Norme{\frac{u_1(t)}{\rho_*}}_{L^p(\R)}\leq C_\symbstab\frac{\norme{U_0 \rho_*^3}_{L^p(\R)}}{(1+t)^{\frac{3}{2} - \frac{1}{2p}}}, 
\hspace{4em}
\norme{u_2(t)}_{L^p(\R)} \leq C_\symbstab e^{- \eta t}  \norme{U_0}_{L^p(\R)}.
\end{equation*}
Furthermore, $C_\symbstab$ depends neither on $C$ or $t_V$.
\end{proposition}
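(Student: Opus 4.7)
The plan is a continuous-induction bootstrap based on Duhamel's formula applied to \eqref{e:system_U}. The key observation that makes it work is that the hypothesis $\norme{V(t)}_{L^\infty(\R)}\le C\le\delta$ on $[0,t_V]$ turns the a priori unbounded nonlinearity into a small multiplicative perturbation of the linear semigroup. Using the identity $\omega_*(\tilde{x}) U_i = \varpi(\tilde{x}) V_i$ from \eqref{e:lien_U-V}, a direct substitution into \eqref{e:def_nonlinear} gives
\begin{equation*}
\Cal{N}_1(U) = -\alpha\bigl(3q_*\,\varpi V_1 + (\varpi V_1)^2\bigr)\,u_1,
\qquad
\Cal{N}_2(U) = \bigl(\gamma\,\varpi V_1 - \sigma(\varpi V_2)^2\bigr)\,u_2;
\end{equation*}
since $\varpi = \rho_*\omegau$ is bounded on $\R$, both terms are pointwise dominated by $C\norme{\varpi}_{L^\infty}(1+C)\,\absolu{u_i}$. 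This bound is uniform in $C\le\delta$ and in $t_V$, exactly as required by the independence statement in the proposition. The idea is adapted from \cite{Beck_Ghazaryan_Sandstede_09}.

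The second component decouples at the linear level, so I handle it first. By \eqref{e:hyp_marginal_stability} together with the spectral picture of \cref{f:spectre}, the operator $\Lv$ has essential spectrum strictly inside $\Set{\real\lambda\le-\eta_0}$ for some $\eta_0>0$, and thus generates an analytic semigroup with $\norme{e^{t\Lv}f}_{L^p(\R)}\le C e^{-\eta_0 t}\norme{f}_{L^p(\R)}$. Duhamel, the pointwise bound on $\Cal{N}_2$ above, and Gr\"onwall's lemma give $\norme{u_2(t)}_{L^p(\R)}\le C_\symbstab e^{-\eta t}\norme{U_0}_{L^p(\R)}$ for any fixed $\eta<\eta_0$, provided $\delta$ is small enough that $\delta\norme{\varpi}_{L^\infty}(1+\delta)\le\eta_0-\eta$.

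For the first component I use the triangular Duhamel representation
\begin{equation*}
u_1(t,\cdot) = \Gtu u_1(0,\cdot) + \Gtuv u_2(0,\cdot) + \int_0^t \bigl(\Gtu[t-\tau]\Cal{N}_1(U(\tau)) + \Gtuv[t-\tau]\Cal{N}_2(U(\tau))\bigr)\d\tau,
\end{equation*}
and set the template $M(t):=\sup_{0\le s\le t}(1+s)^{3/2-1/(2p)}\Norme{u_1(s)/\rho_*}_{L^p(\R)}$. The pointwise Green kernel estimates of \cref{s:decay_U}, paired against the polynomial weight $\rho_*^3$ carried by the initial datum, give the decay $(1+t)^{-3/2+1/(2p)}$ on the first two homogeneous terms, using the representation $\Gluv=\Pscal{\Glu,\beta\Glv}$ for the coupling kernel together with the $u_2$ estimate just obtained. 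The nonlinear term, after extracting the two small factors $V_i$, is bounded by $C_1\delta M(t)$ plus a piece absorbed via the exponential decay of $u_2$. Combining yields
\begin{equation*}
M(t)\le C_0\,\norme{U_0\rho_*^3}_{L^p(\R)} + C_1\,\delta\,M(t),
\end{equation*}
which closes once $C_1\delta\le 1/2$; local well-posedness of \eqref{e:system_U} in $X$ together with continuity of $t\mapsto M(t)$ then extends the estimate up to $t_V$. The technical heart of all this is the sharp matching, inside the point-wise estimates of \cref{s:decay_U}, between the spatial decay of $\Gtu,\Gtv,\Gtuv$ and the polynomial weight $\rho_*^3$: near $+\infty$ the interplay of the critical KPP Gaussian tail with $\rho_*^3$ is what dictates the exponent $3/2-1/(2p)$, while near $-\infty$ one has to verify that the exponential weight $\omegav$ stabilizing the \sh spectrum remains compatible with the polynomial weight transported by the initial data. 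Once those resolvent bounds are in hand, the bootstrap described above is essentially routine.
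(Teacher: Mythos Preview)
Your overall architecture is the same as the paper's: Duhamel plus a bootstrap template, with the nonlinearity rewritten via $\omega_* U = \varpi V$ to absorb the unbounded weight $\omegav$. The treatment of $u_2$ is fine. However, the $u_1$ bootstrap as you have written it does not close, and the gap is precisely in the pointwise bound you state for the nonlinearity.

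You write $\Cal{N}_1(U) = -\alpha\bigl(3q_*\,\varpi V_1 + (\varpi V_1)^2\bigr)u_1$ and then conclude $\absolu{\Cal{N}_1(U)}\le C\norme{\varpi}_{L^\infty}(1+C)\absolu{u_1}$. This bound is correct but too weak. The linear estimate you need from \cref{s:decay_U} is of the form $\Norme{(\Gtu[t-\tau]\cdot w)/\rho_*}_{L^p}\lesssim(1+t-\tau)^{-3/2+1/(2p)}\norme{\rho_*^3 w}_{L^p}$, so to recover $M(t)$ on the right you must control $\norme{\rho_*^3\Cal{N}_1(U)}_{L^p}$ by $\norme{u_1/\rho_*}_{L^p}$. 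From $\absolu{\Cal{N}_1}\lesssim\absolu{u_1}$ you only get $\norme{\rho_*^3\Cal{N}_1}_{L^p}\lesssim\norme{\rho_*^4\cdot(u_1/\rho_*)}_{L^p}$, and $\rho_*^4$ is unbounded at $+\infty$, so the inequality $M(t)\le C_0\norme{U_0\rho_*^3}_{L^p}+C_1\delta M(t)$ does not follow.

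The fix is not to discard the factor $\varpi$: keep $\absolu{\Cal{N}_1(U)}\le C\delta(1+\delta)\,\varpi\,\absolu{u_1}$. Then $\rho_*^3\absolu{\Cal{N}_1}\le C\delta\,(\rho_*^4\varpi)\,\absolu{u_1/\rho_*}$, and the point is that $\rho_*^4\varpi=\rho_*^5\omegau$ is bounded on $\R$ because $\omegau(x)=e^{-\frac{c_*}{2d}x}$ for $x\ge 1$ decays faster than any polynomial, while for $x\le -1$ both $\rho_*$ and $\omegau$ equal $1$. This is exactly the mechanism the paper uses (it phrases it as $\omegau\le C/\rho_*^4$; see the proof of \cref{p:asymptotic_stability_u}). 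Your final paragraph alludes to this interplay for the \emph{linear} Gaussian tail, but the same localisation is what makes the \emph{nonlinear} term close, and it must be invoked explicitly there; merely knowing $\varpi\in L^\infty$ is insufficient. The same remark applies to $\Cal{N}_2$ when it enters the Duhamel for $u_1$ through $\Gtuv$.
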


Second, we turn to $\frac{\omegav}{\rho_*} U = V$, and show it is bounded in time.\footnote{The presence of an extra $\frac{1}{\rho_*}$ is necessary so that source term $\Cal{S}(\tilde{x}, V)$ decay in time, see \cref{p:decay_rest_term}.}
Remind that 
\begin{equation*}
\partial_t V = \Cal{T} V + \Cal{Q}(V) = \Cal{T}^- V + \Cal{Q}^-(V) + \Cal{S}(\tilde{x}, V).
\end{equation*}
We show that $\Cal{S}$ is sufficiently localized in space so that we can extract a $\omegav$ from it. This allows to control $\Cal{S}(V)$ by $U/\rho_*$ -- see \eqref{e:lien_U-V} -- which ensures decay in time as done in \cite[below Proposition 3.3]{Ghazaryan_Sandstede_07} and \cite[Lemma 4.1]{Beck_Ghazaryan_Sandstede_09}. Hence after a suitable time, $V$ is driven by the dynamic at $-\infty$, which allows to get rid of the marginally stable curve of essential spectrum at $+\infty$. Since $\Cal{T}^-$ undergoes a Turing bifurcation, we follow the steps of \cite{Schneider_94_juil, Beck_Ghazaryan_Sandstede_09}, that mostly relies on the use of \emph{mode-filters}, see \cite{Schneider_94_dec}. 

We first show that periodic patterns are naturally selected: after a time $\frac{T}{\mu}$, perturbation $V$ has at first order an oscillating profile. This is commonly referred to as the \emph{approximation property}. For such profiles, using multi-scale analysis, dynamic of the whole system reduces to an amplitude equation, which in our case appears to be the Ginzburg-Landau (\gl) equation: setting $\varepsilon = \sqrt{\mu}$, if $V = \psi(A) + \Cal{O}(\varepsilon^2)$, with $\psi(\varepsilon, A)(t,x) :=  \varepsilon A(\varepsilon^2 t, \varepsilon x)e^{ix}\varrho_\symbc + \cc$,\footnote{Here and in the following, we note $\cc$ for the complex conjugate: $z + \cc := z + \bar{z} = 2\real(z)$.} with a suitable $\varrho_\symbc \in\R^2$, then $A(T,X) \in \C$ satisfies
\begin{equation*}
\partial_T A = 4\partial_{XX} A + A + b A\absolu{A}^2.
\end{equation*}
We refer to \cite{Mielke_02, Mielke_Schneider_95} for the derivation of amplitude equation.
For suitable values of the parameter $\gamma>0$, coefficient $b\in\R$ is negative, has shown in \cref{ss:derive-GL}. Hence the Turing bifurcation is supercritical, and \eqref{e:GL} is known to have a bounded global attractor -- see \cite[Theorem 3.4]{Mielke_Schneider_95} -- which ensures that $A$ is bounded \wrt time. In case where the bifurcation is subcritical, a quintic term is often add to recover a precise behavior, we do not explore this line. To conclude that $V$ stays small for all time, we use the \emph{approximation property}: if $V$ is close to $\psi(A)$ at time $t_0>0$, then the solution of the whole system emanating from $V_0$ is defined upon time $t_0 + T/\mu$, and remains close to $\psi(A)$. This last step can be applied as many time as needed, without deterioration of constants.
All these arguments are made precise in \cref{s:V_bounded}. They lead to the next proposition, which is proved on \cpageref{pf:V_bounded}.
\begin{proposition}[Decay of $U$ implies $V$ bounded]
\label{p:U_implies_V}
Assume hypothesis \eqref{e:hyp_Turing_surcritique} holds.
There exists positive constants $\delta$, $\mu_0$ such that for all $0 < \mu < \mu_0$, the following holds. 
Assume that 
\begin{equation*}
K_0 := \frac{1}{\sqrt{\mu}}\left(\norme{V_0}_{W^{1,\infty}(\R)} + \norme{U_0}_X\right) \leq \delta.
\end{equation*}
Then $V(t)$ is defined for all times $t\geq 0$. Assume further that $C_1$ and $t_U$ are positive constants, such that for all $0 \leq t \leq t_U$:
\begin{equation*}
\Norme{\frac{U(t)}{\rho_*}}_{L^\infty(\R)} \leq C_1 \frac{\norme{U_0 \rho_*^3}_{L^\infty(\R)}}{(1+t)^{3/2}}.
\end{equation*}
Then there exists $C_2$ -- that may depend on $C_1$, but neither on $t_U$, $\delta$ or $\mu_0$ -- such that for all $0 \leq t\leq t_U$, 
\begin{equation*}
\norme{V(t)}_{L^\infty(\R)} \leq C_2 \sqrt{\mu}\, (1 + K_0 e^{-t/\mu}).
\end{equation*}
\end{proposition}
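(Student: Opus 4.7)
The plan is to treat equation \eqref{e:system_V} as a perturbation of the translation-invariant problem $\partial_t V = \Cal{T}^- V + \Cal{Q}^-(V)$, where the source $\Cal{S}(\tilde{x}, V)$ is shown to be integrable in time thanks to the hypothesis on $U$, and then to apply the standard amplitude equation/mode filter scheme near the supercritical Turing bifurcation.

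First, I would quantify the spatial localisation of $\Cal{S}$. From the explicit form \eqref{e:rest-term}, every coefficient vanishes exponentially at $-\infty$: $\varpi - 1$, $\varpi^2 - 1$ and the front differences $1 - q_*(\tilde{x})$, $1 - q_*^2(\tilde{x})$ all decay at rate at least $\absolu{\theta}$ as $\tilde{x}\to-\infty$, precisely where $\omegav(\tilde{x}) = e^{\theta \tilde{x}}$ blows up. One can therefore extract a factor of $\omegav(\tilde{x})^{-1}$ from $\Cal{S}$, and combining with the identity \eqref{e:lien_U-V} obtain a bound of the form
\begin{equation*}
\norme{\Cal{S}(\tilde{x}, V)(t)}_{W^{1,\infty}(\R)} \leq C\Norme{\frac{U(t)}{\rho_*}}_{L^\infty(\R)}\bigl(1 + \norme{V(t)}_{L^\infty(\R)}^2\bigr).
\end{equation*}
Under the hypothesis on $U$, the right-hand side is integrable in $t$ with total mass $\Cal{O}(\sqrt{\mu}\,K_0)$, and so can be absorbed via Duhamel against the semigroup of $\Cal{T}^- + D_0\Cal{Q}^-$.

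Second, I would analyse $\partial_t V = \Cal{T}^- V + \Cal{Q}^-(V)$ through mode filters $\vPic$, $\vPis$ supported respectively in small neighbourhoods of the critical Fourier wavenumbers $\pm 1$ and in their complement, in the spirit of \cite{Schneider_94_dec, Beck_Ghazaryan_Sandstede_09}. The stable component $\vPis V$ relaxes exponentially fast with rate bounded away from $0$ uniformly in small $\mu$, which, after rescaling to the GL time, is what produces the transient $K_0 e^{-t/\mu}$ contribution. The critical component admits the ansatz
\begin{equation*}
\vPic V(t, x) \approx \varepsilon A(\varepsilon^2 t, \varepsilon x) e^{ix} \varrho_\symbc + \cc, \qquad \varepsilon = \sqrt{\mu},
\end{equation*}
where $A(T, X)\in\C$ solves the real Ginzburg-Landau equation \eqref{e:GL}. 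By \eqref{e:hyp_Turing_surcritique} we have $b<0$, so \eqref{e:GL} is dissipative and possesses a global $W^{1,\infty}(\R)$ absorbing ball of $\Cal{O}(1)$ radius (see \cite[Theorem 3.4]{Mielke_Schneider_95}). Together with $\norme{A(0)}_{W^{1,\infty}} \leq C K_0$ coming from the assumption $K_0 \leq \delta$, this gives $\varepsilon\norme{A(T)}_{W^{1,\infty}} \leq C\sqrt{\mu}$ uniformly in $T\geq 0$.

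Third, transferring the boundedness of $A$ back to $V$ proceeds via the \emph{approximation property}: on any window of length $T_0/\mu$, any solution of the unperturbed problem initialised $\Cal{O}(\varepsilon^2)$-close in $W^{1,\infty}$ to $\psi(\varepsilon, A)$ remains so throughout. I would slice $[0, t_U]$ into such windows, and on each window feed the current profile of $V$ as fresh initial data for a new GL solution, apply the approximation theorem together with a Duhamel estimate for $\Cal{S}$, then reset. The main obstacle I anticipate is the uniformity of constants under an unbounded number of reinitialisations: $C_2$ must depend on $C_1$ but not on $t_U$. This hinges on the dissipative character of the supercritical \eqref{e:GL} (so the absorbing ball genuinely attracts and reinitialisation does not compound); on the integrability of $(1+t)^{-3/2}$ (so cumulative contributions of $\Cal{S}$ over all windows remain of size $\Cal{O}(\sqrt{\mu}\,K_0)$); and on the compatibility of $\Cal{S}$ with the mode filter splitting, ensuring that the forcing induced on $A$ stays of size $\Cal{O}(\varepsilon^3)$ on the GL time-scale and does not derail the amplitude approximation.
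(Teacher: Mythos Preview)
Your plan is the paper's: control $\Cal{S}$ via the decay of $U$, then run the mode-filter/Ginzburg--Landau scheme on the autonomous problem $\partial_t V = \Cal{T}^- V + \Cal{Q}^-(V) + \Cal{S}$, with the supercritical attractor of \eqref{e:GL} providing the uniform bound and the approximation property closing the iteration. Two points need sharpening.

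Your bound $\norme{\Cal{S}}_{W^{1,\infty}} \leq C\norme{U/\rho_*}_{L^\infty}(1+\norme{V}^2)$ is incomplete for $\tilde{x}\geq 1$: there the linear part of $\Cal{S}$ in \eqref{e:rest-term} contains the commutator $\varpi^{-1}[\Cal{A}-c_*\partial_x,\varpi]$, a genuine third-order differential operator acting on $V$, so controlling $\norme{\Cal{S}}_{W^{1,\infty}}$ requires $\norme{V}_{W^{2,\infty}\times W^{4,\infty}}$ rather than $\norme{U/\rho_*}_{L^\infty}$. The paper fills this via \cref{p:asymptotic_stability-partial_u}, a parabolic-smoothing step that upgrades the hypothesised $L^\infty$ decay of $U/\rho_*$ to $W^{2,\infty}\times W^{4,\infty}$ decay at the same $(1+t)^{-3/2}$ rate; this is precisely where the derivatives packed into $\norme{U_0}_X$ enter.

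The paper also uses approximation windows of length $T_\symbapr/\varepsilon^{7/4}$, not $T_0/\mu = T_0/\varepsilon^2$ (\cref{l:approximation}), and explicitly remarks that the inhomogeneity $\Cal{S}$ is what caps the window below the customary $\varepsilon^{-2}$. With $\norme{\Cal{S}(t)}_{H^1_\ul}\leq C\varepsilon^3$ for $t\geq T_\symbatt/\varepsilon^2$, the critical-mode forcing from $\Cal{S}$ over a window of length $L$ contributes $CL\varepsilon^3$ to the error $R$; the paper targets $\norme{R}\leq C\varepsilon^{5/4}$, which forces $L\leq C\varepsilon^{-7/4}$. Your $\Cal{O}(\varepsilon^2)$ target on $\varepsilon^{-2}$ windows would instead need the sharper $\norme{\Cal{S}(t)}\leq C\varepsilon^4$, obtained by tracking $\norme{U_0}_X\leq \varepsilon K_0$ explicitly. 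The remaining ingredients---the attractivity phase (\cref{l:attractivity}), the global attractor for \eqref{e:GL} (\cref{l:GL-atractor}), and iteration over windows without deterioration of constants---are exactly as you describe.
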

Finally, we combine \cref{p:V_implies_U,p:U_implies_V} to prove \cref{t:main_result}. It may seems unclear how to use jointly those two propositions. The important point is that $C_\symbstab$ is independent of the bound on $V$. It reads as follows.
\begin{proof}
\textit{\Cref{t:main_result}.\hspace{2ex}}
First apply \cref{l:hyp_are_fulfilled} to obtain the existence of $\Omega$ that allows to fulfill both hypothesis \eqref{e:hyp_marginal_stability} and \eqref{e:hyp_Turing_surcritique}. Take $\mu_0$ small enough, and fix $\rho_* V_0 = \omegav U_0$ such that:
\begin{equation*}
U_0 \rho_*^3 \in L^\infty(\R), \hspace{4em}
K_0 \leq \delta.
\end{equation*}
Since $\Cal{T}$ is sectorial, and $V \mapsto \Cal{Q}(V)$ is locally Lipschitz, the solution $t \in I \mapsto V(t) \in L^\infty(\R)$ to equation $\eqref{e:system_V}$ is uniquely defined on a open, nonempty, maximal set $I$ -- see \cite{Henry_semilinear_parabolic_equation}. Hence there exists $t_0$ and $C$ positive constants such that 
\begin{equation*}
\norme{V(t)}_{L^\infty(\R)} \leq C
\end{equation*}
for all $0\leq t \leq t_0$. Applying \cref{p:V_implies_U}, this ensures that for all $0 \leq t \leq t_0$, the perturbation $U/\rho_*$ is uniquely defined in $L^\infty(\R)$, and satisfies
\begin{equation}
\label{e:proof_th_decay_U}
N(t) := \frac{1}{(1+t)^{3/2}} \Norme{\frac{U(t)}{\rho_*}}_{L^\infty(\R)} \leq C_\symbstab \norme{U_0 \rho_*^3}_{L^\infty(\R)}.
\end{equation}
Consider $t_1$ the first time where this inequality may fail:
\begin{equation*}
t_1 := \inf I, 
\hspace{4em}
I := \Set{t>0 : N(t) > C_\symbstab \norme{U_0\rho_*^3}_{L^\infty(\R)}}.
\end{equation*}
Assuming by contradiction that $I$ is nonempty, remark that $0 < t_0 \leq t_1 < +\infty$. From \cref{p:U_implies_V}, $V$ is defined for all times $0 \leq t \leq t_1$, with bound 
\begin{equation*}
\norme{V(t)}_{L^\infty(\R)} \leq C_2 \sqrt{\mu}\, (1 + K_0),
\hspace{4em}
0 \leq t \leq t_1
\end{equation*}
where $C_2$ depends on $C_\symbstab$. Reasoning as above, there exists $t_2 > t_1$ such that $V$ is defined up to time $t_2$, with bound
\begin{equation*}
\norme{V(t)}_{L^\infty(\R)} \leq 2 C_2 \sqrt{\mu}\, (1 + K_0),
\hspace{4em}
0 \leq t \leq t_2.
\end{equation*}
Assuming that $\mu_0(1 + K_0)$ is small enough, we can assume $2 C_2 \sqrt{\mu}\, (1 + K_0) \leq \delta$ so that \cref{p:V_implies_U} applies again, and we recover \eqref{e:proof_th_decay_U} for times $0 \leq t \leq t_2$. This is a contradiction with the definition of $t_1$, hence we conclude that $I$ is empty, and that \eqref{e:proof_th_decay_U} holds for all times. Applying \cref{p:U_implies_V}, we recover the claimed bound on $V$. Assuming now that $\norme{\rho_*^3 U_0}_{L^p} \leq \delta$, we can finally apply \cref{p:V_implies_U} to recover $L^p(\R)$ estimates on $U$ for all times. 
\end{proof}

\begin{remark}
To understand how $V$ stays bounded in time, we separate critical from stable frequencies, using mode-filters in Fourier space. This imposes us to work with uniformly localized spaces, which are difficult to combine with Green's kernel approach. 
An alternative strategy would be to describe Turing patterns as solution of the eigenvalue ODE problem -- hoping they will reflect on the Green kernel -- and separate critical from stable mode in Laplace space. It would allow to work both in Sobolev spaces, and with the full dynamic $\Cal{T}$ rather than the asymptotic one $\Cal{T}^-$. This last point allow to remove the source term $\Cal{S}$, and as so to break the $U$ - $V$ interaction in our proof. 
\end{remark}

\section{Decay of perturbations in fully weighted space}
\label{s:decay_U}

\subsection{Essential and point spectrum}
\label{ss:spectrum_localization}
We say that a scalar differential operator $\Cal{T}(x) = \sum_{j = 0}^n  a_j(x) \partial_x^j$ is \emph{exponentially asymptotic} if it converges with uniform exponential rate at $\pm\infty$: there exists $r>0$ such that for all $0\leq j\leq n$, 
\begin{equation*}
\lim_{x\to \pm\infty} e^{r\absolu{x}} \absolu{a_j(x) - a_j^\pm} = 0.
\end{equation*}
We note $\spectre(\Cal{T})$ the spectrum of $\Cal{T}$, \ie the set of complex numbers $\lambda$ such that $\lambda - \Cal{T} : H^n(\R) \subset L^2(\R) \to L^2(\R)$ is not bounded invertible. To study the spectrum, we decompose it into two distinct parts. We say that $\lambda - \Cal{T}$ is Fredholm if its Fredholm index
\begin{equation*}
\fred(\lambda-\Cal{T}) := \dim \ker(\lambda - \Cal{T}) - \codim \range(\lambda - \Cal{T})
\end{equation*}
is defined and finite. We define the point spectrum as 
\begin{equation*}
\spectre[pt](\Cal{T}) := \Set{\lambda \in\spectre(\Cal{T}) : \fred(\lambda - \Cal{T}) = 0},
\end{equation*}
it corresponds to the subset in which the rank-nullity theorem holds.
Then, the essential spectrum is the complementary set:
\begin{equation*}
\spectre[ess](\Cal{T}) := \Set{\lambda \in\spectre(\Cal{T}) : \fred(\lambda - \Cal{T}) \text{ is non defined or nonzero}}.
\end{equation*}
With such definitions, the asymptotic operator with piece-wise constant coefficients
\begin{equation*}
\Cal{T}^\infty(x) := \begin{cases}
\Cal{T}^+ = \sum_{j=0}^n a_j^+ \partial_x^j & \text{if } x>0,\\
\Cal{T}^- = \sum_{j=0}^n a_j^- \partial_x^j & \text{if } x<0,
\end{cases}
\end{equation*}
share the same essential spectrum as $\Cal{T}$, see \cite[p 40]{Kapitula_Promislow}. If both $\Cal{T}^+$ and $\Cal{T}^-$ are elliptic -- in one dimensional space, it comes down to both $(-1)^{n/2}a^+_n$ and $(-1)^{n/2}a_n^-$ being positive -- then $\spectre[ess](\Cal{T}) = \spectre[ess](\Cal{T}^\infty)$ is located to the left of the asymptotic Fredholm borders
\begin{equation*}
\label{e:asymptotic_spectrum}
\spectre(\Cal{T}^+) \cup \spectre(\Cal{T}^-) = \Set{\sum_{j=0}^n a_j^+ (i\xi)^j : \xi\in\R} \cup \Set{\sum_{j=0}^n a_j^- (i\xi)^j : \xi\in\R} \subset \C,
\end{equation*}
see again \cite[Theorem 3.1.13]{Kapitula_Promislow}.\footnote{If $\Cal{T}^\pm$ are elliptic, then both $\lambda - \Cal{T}^\pm$ are invertible for $\real{\lambda}$ large enough. This is equivalent to have $\frac{n}{2}$ stable (respectively unstable) spatial eigenvalues for $\lambda - \Cal{T}^+$ (respectively $\lambda - \Cal{T}^-$) and ensures that the region $S_1$ that contains $(\eta, +\infty)$ -- for $\eta\in\R$ large enough -- does not belong to $\spectre[ess](\Cal{T}^\infty)$.} 

For a matrix operator: $\Cal{T} = \begin{pmatrix}
\Cal{T}_{1,1} & \Cal{T}_{1,2} \\
\Cal{T}_{2,1} & \Cal{T}_{2,2}
\end{pmatrix}$, the asymptotic spectra
$\spectre(\Cal{T}^\pm)$ are still obtained through Fourier transform. The operator $\lambda - \Cal{T}^+$ is invertible \iff the matrix $\lambda - \hat{\Cal{T}}^+(\xi)$ is invertible for all $\xi \in\R$ -- since Fourier transform is an isometry -- which is equivalent to $\xi \mapsto \det(\lambda - \hat{\Cal{T}}^+(\xi))$ never vanishing. In our special case, $\Cal{L}^+$ is triangular, hence the determinant is nonzero if both diagonal coefficients maintain away from zero. We conclude that 
\begin{equation*}
\spectre(\Cal{L}^+) = \spectre(\Lu[,+]) \cup \spectre(\Lv[,+]), 
\end{equation*}
and the same goes at $-\infty$. Once again, the essential spectrum of $\Cal{L}$ is located to the left of $\spectre(\Cal{L}^+) \cup \spectre(\Cal{L}^-)$.

\begin{proposition}[Marginally stable essential spectrum]
\label{p:essential_spectrum}
Fix $\alpha$, $d$ positive. Then there exists $\mu_0>0$ such that for all $0 < \mu < \mu_0$, there exists $\theta < 0$ such that the monotonic weight $\omega_* = \omegau \, \omegav$, with
\begin{equation*}
\omegau(x) = 
\begin{cases}
1 & \text{if } x \leq -1,\\
e^{-\frac{c_*}{2d} x} & \text{if } x \geq 1,
\end{cases}
\hspace{4em}
\omegav(x) = 
\begin{cases}
e^{\theta x} & \text{if } x \leq -1,\\
1 & \text{if } x \geq 1,
\end{cases}
\end{equation*}
satisfies the following. For all $\gamma > \gamma_\Rm{rem}$, with $\gamma_\Rm{rem}$ as in \cite{Faye_Holzer_Scheel_Siemer_20}:
\begin{equation*}
\gamma_\Rm{rem}:= 8\left(\frac{\alpha}{d}\right)^2 + 4\frac{\alpha}{d} - 2\alpha + \mu_0,
\end{equation*}
the operator $\Cal{L} = \omega_*^{-1} \Cal{A} \omega_*$ has marginally stable essential spectrum. 
More precisely, the Fredholm curve corresponding to the essential \kpp spectrum touches the imaginary axis only at $\lambda = 0$:
\begin{equation*}
\spectre(\Lu[,+]) = \Set{-\xi^2 : \xi \in \R},
\end{equation*}
while the three other Fredholm borders have spectral gap: there exists $\eta>0$ depending only on $\mu_0$, $d$, $\alpha$ and $\gamma$ such that
\begin{equation}
\label{e:spectral_stability_eta2}
\spectre(\Lu[,-])\cup \spectre(\Lv[,+]) \cup \spectre(\Lv[,-]) \subset \Set{\lambda\in\C: \real{\lambda}\leq -3\eta}.
\end{equation}
In particular, hypothesis \eqref{e:hyp_marginal_stability} is fulfilled.
Furthermore, $\theta \to 0$ when $\mu_0 \to 0$.
\end{proposition}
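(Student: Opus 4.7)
My plan is to use the preliminary discussion in Section~3.1: since each of $\Cal{A}^\symbkpp$ and $\Cal{A}^\symbsh$ is exponentially asymptotic and elliptic at $\pm\infty$, and since $\Cal{L}$ is upper triangular, the essential spectrum of $\Cal{L}$ coincides with the essential spectrum of the piece-wise constant operator $\Cal{L}^\infty$, which is in turn controlled by the four Fredholm borders
\begin{equation*}
\spectre(\Lu[,\pm]) \cup \spectre(\Lv[,\pm]),
\end{equation*}
each computed by Fourier transform of the appropriate constant-coefficient operator obtained after conjugation by the asymptotic value of $\omega_*$. So the whole proof reduces to four explicit symbol computations, and then a choice of the free parameter $\theta<0$.

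First I would treat $\Lu[,+]$: here the weight is $e^{-\frac{c_*}{2d}x}$ and $q_*(+\infty)=0$, so the standard KPP computation using $c_*^2 = 4d\alpha$ cancels all lower-order terms, leaving exactly $d\partial_{xx}$ at $+\infty$. Its Fourier symbol is $-d\xi^2\in(-\infty,0]$, which gives the claimed curve $\{-\xi^2:\xi\in\R\}$ (as sets of $\C$), touching the imaginary axis only at $0$. Next, $\Lu[,-]$: the weight at $-\infty$ is $e^{\theta x}$ and $q_*(-\infty)=1$, so the conjugated symbol has real part $-d\xi^2+d\theta^2+c_*\theta-2\alpha$, whose supremum is $d\theta^2+c_*\theta-2\alpha$. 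For $|\theta|$ small this is arbitrarily close to $-2\alpha<0$, hence a uniform spectral gap.

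The SH computations are the heart of the matter. For $\Lv[,+]$ (weight $e^{-\frac{c_*}{2d}x}$, $q_*=0$), I would expand $(1+(i\xi-\frac{c_*}{2d})^2)^2$ and compute the maximum over $\xi\in\R$ of the real part. A short calculation (substitute $u=\xi^2$ into a concave quadratic) gives the supremum
\begin{equation*}
8\left(\tfrac{\alpha}{d}\right)^2+4\tfrac{\alpha}{d}-2\alpha+\mu-\gamma,
\end{equation*}
which is $\le -3\eta$ precisely under the assumption $\gamma>\gamma_\Rm{rem}$ stated in the proposition, uniformly for $\mu<\mu_0$. For $\Lv[,-]$ (weight $e^{\theta x}$, $q_*=1$), the real part of the symbol becomes
\begin{equation*}
-(1-\xi^2+\theta^2)^2+4\xi^2\theta^2+c_*\theta+\mu.
\end{equation*}
For $|\xi|$ bounded away from $1$, the quartic $-(1-\xi^2)^2$ yields a big negative contribution; the delicate region is $\xi\approx\pm 1$, where the unweighted real part $-(1-\xi^2)^2+\mu$ becomes positive (this is exactly the Turing instability). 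Here the weight term $c_*\theta<0$ saves the day: a straightforward Taylor expansion at $\xi=\pm 1$ shows that the maximal real part near these frequencies is of the form $c_*\theta+\mu+O(\theta^2)+O((\xi\mp 1)^2)$.

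The main obstacle is the simultaneous tuning of $\theta$ in the two opposite directions. I would therefore fix $\mu_0$ so small that $\mu_0<|c_*\theta|/2$ is achievable for some $\theta<0$ whose absolute value is itself small enough that the $\Lu[,-]$ bound $d\theta^2+c_*\theta-2\alpha\le-3\eta$ still holds; concretely, choose $\theta(\mu_0)$ of order $\sqrt{\mu_0}$ (or any $\theta$ with $\mu_0\ll|\theta|\ll 1$), which gives $c_*\theta+\mu\le -3\eta$ and $\theta\to 0$ as $\mu_0\to 0$, as claimed. Combining the four estimates, \eqref{e:spectral_stability_eta2} holds with a single $\eta>0$ depending only on $\mu_0,d,\alpha,\gamma$, and the $+\infty$ \kpp border remains pinned to $(-\infty,0]\subset\{\real\lambda\le 0\}$, so that \eqref{e:hyp_marginal_stability} is verified.
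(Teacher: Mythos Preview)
Your proposal is correct and follows essentially the same approach as the paper: compute the four asymptotic symbols after conjugation by the pure exponential weight, optimise over $\xi$, and tune $\theta<0$ against $\mu_0$. The only substantive difference is in the $\Lv[,-]$ case: you argue by Taylor expansion near $\xi=\pm 1$ together with a heuristic ``$-(1-\xi^2)^2$ is large negative away from $\pm 1$'', whereas the paper simply repeats the concave-quadratic optimisation you already did for $\Lv[,+]$ (replace $c_*/2d$ by $-\theta$ and drop the $-\gamma$) to obtain the exact global maximum $\mu+c_*\theta+4\theta^2+8\theta^4$. Using that explicit value would make your argument cleaner and avoid the informal splitting of the $\xi$-axis, but the conclusion and the choice $\mu_0\ll|\theta|\ll 1$ are the same.
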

\begin{proof}
The asymptotic operators $\Lu[,\pm]$ and $\Lv[,\pm]$ are obtained as conjugation of $\Au[,\pm]$ and $\Av[,\pm]$ with pure exponential weight $e^{\theta x}$ or $e^{-\frac{c_*}{2d} x}$. Direct computations (see \cref{l:conjugaison_operateur_matrice} below) show that $\Lu[,+] = \Au[,+](-\frac{c_*}{2d} + \partial_x)$, where we identify the operator $\Au[,+]$ with its symbol $\Au[,+](X) = dX^2 + c_*X + \alpha$ evaluated at $X = \partial_x$. Hence, the Fredholm border is given by 
\begin{equation*}
\spectre(\Lu[,+]) = \Set{\Au[,+]\left(-\frac{c_*}{2d} + i\xi\right) : \xi\in\R} = \Set{-\xi^2 : \xi\in\R}.
\end{equation*}
Using similar notations for the three other curves, we get
\begin{equation*}
\real\left(\Au[,-](\theta + i\xi)\right) = \real\left(d(\theta + i\xi)^2 + c_*(\theta+i\xi) - 2\alpha\right) \leq -2\alpha + d\theta^2 + c_*\theta < -\alpha 
\end{equation*}
for $\theta \in (-\theta_0, \theta_0)$, where $\theta_0>0$ only depends on $\alpha$ and $d$. Similarly with $\Av[,-](X) = -(X^2 + 1)^2 + cX + \mu$, a direct computation shows that $\real\left(\Av[,-](\theta + i\xi)\right)$ is maximal at $\pm \xi_0 = \pm\sqrt{1+3\theta^2}$, hence:
\begin{equation*}
\real\left(\Av[,-](\theta + i\xi)\right) \leq  
- \theta^4 - 2\theta^2 + c_*\theta + \mu + \xi_0^4 - 1 = \mu + c_* \theta + 4 \theta^2 + 8\theta^4
\end{equation*}
Fix $\eta> 0$ small, it is easily seen that the right hand side is less than $-3\eta$ for some $\theta < 0$ that goes to $0$ when $\mu_0 \to 0$.

Finally for $\Av[,+](X) = \Av[,-](X) - \gamma$, the same calculation with $\zeta_0 := \sqrt{1+ 3(\frac{c_*}{2d})^2}$ shows that
\begin{equation*}
\real\left(\Av[,+]\left(-\frac{c_*}{2d} + i\xi\right)\right) \leq  -\left(\frac{c_*}{2d}\right)^4 - 2\left(\frac{c_*}{2d}\right)^2 - \frac{c_*^2}{2d} + \mu_0 - \gamma + \zeta_0^4 - 1 = \gamma_\Rm{rem} - \gamma.
\end{equation*}
Hence for $\gamma > \gamma_\Rm{rem}$, there exists $\eta>0$ such that $\real\left(\Av[,+]\left(-\frac{c_*}{2} + i\xi\right)\right) < -3\eta$ as claimed.
\end{proof}
\begin{remark}
\label{r:distinct_weights}
We examine the case of distinct weights on \kpp and \sh component. Note $\Omega(x) = \Rm{Diag}(\omega_1(x), \omega_2(x))$, and replace ansatz \eqref{e:ansatz_perturbation} by $\transpose{(u, v)}(t,x) = Q(\tilde{x}) + \Omega(\tilde{x}) U(t, \tilde{x})$, such that perturbation $U$ is driven at linear level by
\begin{equation*}
\partial_t U = 
\begin{pmatrix}
{\omega_1}^{-1}\Au \omega_1 & \frac{\omega_2}{\omega_1} \beta \\
0 & {\omega_2}^{-1}\Av \omega_2
\end{pmatrix} U.
\end{equation*}
For our study to be relevant, we need $\frac{\omega_2(x)}{\omega_1(x)}$ to be bounded \wrt $x$. The computations done in \cref{p:point_spectrum} shows that both $\omega_1(x) \leq e^{-\frac{c_*}{2d}x}$ for $x\geq 1$ and $\omega_2(x) \geq e^{\theta x}$ for $x\leq -1$ are necessary to obtain marginal spectral stability. With the condition $\omega_2(x)\leq C \omega_1(x)$ for $x\in\R$, we are left with $\omega_1 = \omega_2$.
\end{remark}

\begin{proposition}[Stable point spectrum]
\label{p:point_spectrum}
With the same assumptions on $\mu_0$, $\mu$, $\alpha$, $d$, $\gamma$, $\eta$ as in previous \cref{p:essential_spectrum}, the operator $\Cal{L}$ has no eigenvalue in the following set:
\begin{equation*}
\Omega_\Rm{eig}:=\Set{\lambda \in\C : \real{\lambda} > -3\eta}\backslash (-\infty, 0).
\end{equation*}
\end{proposition}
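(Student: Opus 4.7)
The plan is to exploit the upper-triangular structure of $\Cal{L}$. An eigenvector $\transpose{(\psi_1,\psi_2)} \in H^2(\R)\times H^4(\R)$ with eigenvalue $\lambda$ satisfies $\Lv\psi_2 = \lambda\psi_2$ from the second row; either $\psi_2\neq 0$ and $\lambda\in\spectre[pt](\Lv)$, or $\psi_2 = 0$ and $\psi_1\neq 0$ forces $\lambda\in\spectre[pt](\Lu)$. It thus suffices to show that neither scalar operator admits an eigenvalue in $\Omega_\Rm{eig}$.

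For $\Lu$: on $\tilde x \geq 1$, the weight reduces to $e^{-c_*\tilde x/(2d)}$ and direct conjugation (using $c_*^2 = 4d\alpha$) turns $\Au$ into the symmetric, non-positive Schrödinger operator $d\partial_{xx} - 3\alpha q_*^2(\tilde x)$. For $\lambda \in \Omega_\Rm{eig}\setminus\Set{0}$, the asymptotic dispersion relations $\Lu^\pm(\nu) = \lambda$ yield two distinct spatial roots with real parts of opposite signs (by \cref{p:essential_spectrum}), so any candidate eigenfunction must lie in the intersection of the one-dimensional stable subspace at $+\infty$ and the one-dimensional unstable subspace at $-\infty$. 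A shooting / Evans-function argument, exploiting the negative semi-definiteness on $\tilde x\geq 1$ and controlling the order-$\absolu{\theta}$ correction introduced by $\omegav$ on $\tilde x\leq -1$, rules out any such intersection. The endpoint $\lambda = 0 \in \Omega_\Rm{eig}$ must be handled separately because the $+\infty$ dispersion relation degenerates: the unique solution to $\Au \phi = 0$ decaying at $+\infty$ is $\phi \propto q_*'$, and since $q_*'(\tilde x) \sim (a\tilde x + b)\,e^{-c_*\tilde x/(2d)}$, the function $\omega_*^{-1}q_*'$ grows linearly at $+\infty$ and so is not in $L^2(\R)$. This is precisely the non-vanishing of the Evans function at $0$ alluded to in the introduction.

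For $\Lv$: \cref{p:essential_spectrum} provides a uniform spectral gap of width $3\eta$ at both asymptotic Fredholm borders, so for $\lambda \in \Omega_\Rm{eig}$ the first-order reformulation of $(\lambda - \Lv)\psi = 0$ admits exponential dichotomies on both half-lines, with two-dimensional stable and unstable subspaces at $\pm\infty$. I propose to rule out their intersection by a global energy estimate. Pairing $\Lv\psi = \lambda\psi$ with $\psi$ in $L^2(\R)$, the biharmonic contributes $-\norme{(1+\partial_{xx})\psi}^2 \leq 0$, the drift $c_*\partial_x$ and its weight-induced corrections are antisymmetric, and the remaining scalar terms reduce in Fourier space to the real parts of the symbols of $\Lv^\pm$. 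The calibration performed in \cref{p:essential_spectrum} (namely $\gamma > \gamma_\Rm{rem}$ and $\theta$ suitably negative) then bounds the total from above by $-3\eta\,\norme{\psi}^2$, contradicting $\real\lambda > -3\eta$. The spatial variation of the coefficients of $\Lv$ between the two asymptotic regimes contributes only exponentially localized corrections and can be absorbed through cut-off arguments.

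The main obstacle is the Turing degeneracy of the biharmonic $-(1+\partial_{xx})^2$ in $\Lv$, which vanishes in Fourier at $\xi = \pm 1$: coercivity cannot come from the fourth-order derivatives and must instead be supplied by the potential $-\gamma(1 - q_*)$ and the weight-induced drift, which is exactly why the threshold $\gamma > \gamma_\Rm{rem}$ is sharp. A secondary difficulty is closing the narrow slice $-3\eta < \real\lambda \leq 0$ off the open negative real axis for $\Lu$: the non-positivity argument alone degrades by $O(\absolu{\theta})$ under the conjugation by $\omegav$, so the final sliver must be closed by a direct second-order ODE / Evans-function computation rather than by an energy bound.
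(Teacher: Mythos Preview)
Your high-level strategy---exploit the triangular structure to reduce to the two scalar operators, then an energy estimate for $\Lv$ and a Schr\"odinger-type argument for $\Lu$---matches the paper's. But both scalar steps as you describe them have gaps that the paper closes with a specific trick you are missing: \emph{undo part of the weight before estimating}.

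For $\Lv$: your claim that ``the drift $c_*\partial_x$ and its weight-induced corrections are antisymmetric'' is not correct, because $\omega_* = \omegau\omegav$ is \emph{not} a pure exponential. The ratio $\omega_*'/\omega_*$ equals $\theta$ for $\tilde x\leq -1$ but $-c_*/(2d)$ for $\tilde x\geq 1$, so after conjugation the odd-order coefficients of $\Lv$ are genuinely $x$-dependent and do not integrate away; likewise the biharmonic no longer yields a clean $-\norme{(1+\partial_{xx})\psi}_{L^2}^2$. The paper's fix is to conjugate $\Av$ by the \emph{pure} exponential $e^{\theta x}$ instead, setting $\Cal{B}^\symbsh := e^{-\theta x}\Av e^{\theta x}$ and $\varphi := e^{-\theta x}\omega_*\,\phi^\symbsh$. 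Then the derivative coefficients $b_1,\ldots,b_4$ are constant, the odd ones drop out of $\real\pscal{\Cal{B}^\symbsh\varphi,\varphi}$, and $b_2 = 2(1+3\theta^2)>0$, $b_4=-1<0$ have the right signs. The zeroth-order coefficient satisfies $b_0(x) = \mu - \gamma(1-q_*) + c_*\theta - (\theta^2+1)^2 \leq -3\eta$ uniformly in $x$, which is exactly the calibration from \cref{p:essential_spectrum}. One must also check $\varphi\in H^4(\R)$: this holds because $e^{-\theta x}\omega_*(x)\leq 1$ on $\R$ (it equals $1$ at $-\infty$ and decays at $+\infty$ since $\absolu{\theta}<c_*/(2d)$). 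Your proposed ``cut-off arguments'' for the transition region are not needed once this re-conjugation is in place.

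For $\Lu$: you work too hard. No shooting, no Evans function, and no worry about the $\omegav$ perturbation on $\tilde x\leq -1$ are required. The paper simply passes back to the \emph{unweighted} operator: if $\phi^\symbkpp\in H^2(\R)$ is an eigenfunction of $\Lu$ at $\lambda$, then $\omega_*\phi^\symbkpp\in H^2(\R)$ solves $\Au\psi = \lambda\psi$. Since $q_*'\in H^2(\R)$ is a strictly positive solution of $\Au q_*' = 0$, classical Sturm--Liouville theory forces every $H^2$ eigenvalue of $\Au$ to be real and $\leq 0$. The endpoint $\lambda = 0$ is excluded exactly as you say, because $\omega_*^{-1}q_*'\notin L^2$. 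This replaces your entire ``narrow slice'' discussion.
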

\begin{proof}
We assume by contradiction that there exists $\lambda\in\Omega_\Rm{eig}$ and a nonzero $\phi = (\phi^\symbkpp, \phi^\symbsh)\in H^2(\R, \R)\times H^4(\R,\R)$ such that 
\begin{equation*}
(\lambda - \Cal{L}) \phi = 0.
\end{equation*}
In particular, the second eigenproblem is decoupled: $(\lambda - \Lv)\phi^\symbsh = 0$. We show it has no other solution than $\phi^\symbsh = 0$ in $H^4(\R)$, which will imply that the first eigenproblem $(\lambda - \Lu)\phi^\symbkpp = \beta \phi^\symbsh$ admits no other solution than $\phi^\symbkpp = 0$. This is a contradiction.

Both $\Lv$ and $\Cal{A}^\symbsh$ are not easy to work with, because they respectively have non-constant coefficients or unstable essential spectrum. Instead, we use
\begin{equation*}
\Cal{B}^\symbsh := e^{-\theta x} \Cal{A}^\symbsh e^{\theta x} = e^{-\theta x} \omega_* \Lv \omega_*^{-1} e^{\theta x}
= b_0(x) + \sum_{j=1}^4 b_j \partial_x^j,
\end{equation*}
with $\theta$ and $\omega_*(x)$ defined in the above \cref{p:essential_spectrum}. Then $\varphi := e^{-\theta x} \omega_* \phi^\symbsh$ lies in $H^4(\R)$ due to $e^{-\theta x} \omega_*(x) \leq 1$,\footnote{Recall that $-\frac{c_*}{2} \leq \theta$ when $\mu$ is small enough.} and satisfies $(\lambda - \Cal{B}^\symbsh) \varphi = 0$. Take the $L^2(\R, \C)$-inner product with $\varphi$:
\begin{equation*}
\lambda\norme{\varphi}_{L^2}^2 = \pscal{b_0(x)\varphi, \varphi} + \sum_{j=1}^4 b_i \pscal{\partial_x^j \varphi, \varphi}.
\end{equation*}
Then $\pscal{\partial_x^j \varphi, \varphi} = (-1)^j \, \overline{\pscal{\partial_x^j \varphi, \varphi}}$, so that the real part of the above equality writes:
\begin{equation}
\label{e:no_eigenvalue_1}
\real(\lambda) \, \norme{\varphi}_{L^2}^2 = \int_{\R} b_0 \absolu{\varphi}^2 \d x - b_2 \norme{\partial_x \varphi}_{L^2}^2 + b_4 \norme{\partial_x^2 \varphi}_{L^2}^2 \leq -3\eta\norme{\varphi}_{L^2}^2.
\end{equation} 
The inequality is obtained using that $b_2 = 2(1+3\theta^2) > 0$, that $b_4 = -1 <0$ and finally that
\begin{equation*}
b_0(x) = \mu - \gamma(1 - q_*(x)) + c_* \theta - (\theta^2+1)^2 \leq \mu + c_*\theta \leq \mu + c_*\theta + 4 \theta^2 + 8\theta^4 \leq -3\eta.
\end{equation*} 
Recall we have chosen $\real{\lambda} > -3\eta$, hence \eqref{e:no_eigenvalue_1} implies $\varphi=0$ and then $\phi^\symbsh = 0$ as claimed.

Now the first eigenproblem writes $(\lambda - \Lu)\phi^\symbkpp = 0$, hence $\omega_* \phi^\symbkpp \in H^2(\R)$ is an eigenfunction for $\Au$. Using Sturm-Liouville theory -- see \emph{e.g.} \cite[p 33.]{Kapitula_Promislow} --  eigenvalues of $\Au$ are real and non-positive, since $q_*'\in H^2(\R)$ does not vanish and satisfies $\Au q_*' = 0$. Hence $\Au$ has no eigenvalues outside of $(-\infty, 0]$, and  so does $\Lu$. Since $\frac{1}{\omega_*(x)}q_*'(x) \sim a x + b$ when $x\to +\infty$, the derivative of the front does not contribute to an eigenfunction for $\Lu$, hence it has no eigenvalue oustide $(-\infty, 0)$. The first eigenproblem imposes either $\lambda\in (-\infty, 0)$ or $\phi^\symbkpp = 0$, which is a contradiction, and complete the proof.
\end{proof}

\subsection{Construction of decaying ODE solutions}
Here we solve the linear non-autonomous ODE $(\lambda - \Cal{L}^i)\phi = 0$, with unknown $\phi$ and $i\in\Set{\symbkpp, \symbsh}$. To keep notations simple, we concentrate on $i = \symbsh$. Since $\Lv$ is exponentially asymptotic, $\phi$ expresses at first order using the solutions of the asymptotic ODE, as described in the incoming Lemma. After vectorialization, $(\lambda - \Lv)\phi = 0$ writes $\partial_x \Phi = A^\symbsh(\lambda, x) \Phi$, with a matrix $A^\symbsh(\lambda, x)$ that exponentially converges towards matrices $A^{\symbsh, \pm}(\lambda)$ when $x\to \pm\infty$. 
\begin{lemma}
\label{l:ODE_solutions}
Let $A : t\in\R \mapsto A(t) \in\Cal{M}_n(\C)$ be a continuous, matrix-valued function, that converges at exponential speed towards $A_\infty$ when $t\to +\infty$. Let $\alpha>0$ that satisfies $\norme{A(t) - A_\infty} \leq Ce^{-\alpha t}$ for $t\geq 0$.\footnote{Here $\norme{.}$ is any operator norm.} Let $(v_i)_{1\leq i\leq n}$ be a basis of $\C^n$. Then there exists $(y_i)_{1\leq i \leq n}$ a basis of solutions for the ODE
\begin{equation}
\label{e:lemme_edo:edo1}
y'(t) = A(t) y(t),
\end{equation}
and $\kappa_i\in L^\infty(0, +\infty)^n$ such that for $t\geq 0$ we have
\begin{equation*}
y_i(t) = e^{tA_\infty} (v_i + e^{-\alpha t} \kappa_i(t)).
\end{equation*}
Furthermore, if $\lambda \mapsto A(t, \lambda)$, $A_\infty$ and $v$ are holomorphic with respect to an extra parameter $\lambda$, then $\lambda \mapsto \kappa(\lambda, \cdot)$ is also holomorphic.
\end{lemma}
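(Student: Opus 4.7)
The plan is to remove the dominant asymptotic flow by the substitution $y(t) = e^{tA_\infty}\bigl(v + h(t)\bigr)$, reducing the problem to a fixed-point equation for the remainder $h$. Plugging the ansatz into \eqref{e:lemme_edo:edo1}, the $A_\infty$-terms cancel, leaving
\begin{equation*}
h'(t) = B(t)\bigl(v + h(t)\bigr), \qquad B(t) := e^{-tA_\infty}\bigl(A(t)-A_\infty\bigr) e^{tA_\infty}.
\end{equation*}
The hypothesis $\norme{A(t)-A_\infty}\le Ce^{-\alpha t}$, together with the at-most-polynomial growth of $e^{\pm tA_\infty}$ on each generalised eigenspace of $A_\infty$, gives (after possibly shrinking $\alpha$) a bound of the form $\norme{B(t)}\le C' e^{-\alpha t}$ on $[0,+\infty)$.

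Next, since $h$ should vanish at $+\infty$, I would integrate the equation from infinity to obtain the fixed-point formulation
\begin{equation*}
h(t) = -\int_t^{+\infty} B(s)\bigl(v + h(s)\bigr)\,\d s =: (\Cal{F}h)(t),
\end{equation*}
and solve it by contraction in the Banach space $E_T := \Set{h\in C([T,+\infty),\C^n) : \sup_{t\ge T} e^{\alpha t}\absolu{h(t)}<+\infty}$ endowed with its weighted sup-norm. For $T$ large enough the bound on $B$ makes $\Cal{F}$ stabilise a closed ball and act as a strict contraction; the resulting fixed point $h$ satisfies $\absolu{h(t)}\le C e^{-\alpha t}$, so $\kappa(t) := e^{\alpha t} h(t)$ is bounded on $[T,+\infty)$. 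Linearity of \eqref{e:lemme_edo:edo1} then lets me extend $y(t) = e^{tA_\infty}(v+h(t))$ backwards to all of $\R_+$, and $\kappa$ stays bounded on the compact segment $[0,T]$.

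Applying this construction to each $v_i$ produces candidate solutions $y_i$. To check the basis property, suppose $\sum_i c_i y_i(t)\equiv 0$; multiplying by $e^{-tA_\infty}$ gives $\sum_i c_i\bigl(v_i + e^{-\alpha t}\kappa_i(t)\bigr)\equiv 0$, and letting $t\to+\infty$ forces $\sum_i c_i v_i = 0$, hence $c_i = 0$ for all $i$. For the holomorphic statement, the contraction $\Cal{F}$ depends holomorphically on $\lambda$ with a contraction rate uniform on compact subsets of the parameter, so its fixed point is a uniform limit of holomorphic Picard iterates and is therefore holomorphic in $\lambda$.

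The main obstacle is the control of $B(t)$: a priori $\norme{e^{\pm tA_\infty}}$ grows exponentially on eigenspaces of $A_\infty$ whose eigenvalues have nonzero real part, which would destroy the decay of $B(t)$. I would circumvent this by splitting $\C^n$ along the generalised eigenspaces of $A_\infty$ and running the Levinson-type fixed-point argument blockwise, choosing the direction of integration (from $0$ or from $+\infty$) according to the sign of $\real(\mu_j - \mu_i)$ for each pair of eigenvalues involved. In the intended applications, $A_\infty$ comes from companion matrices of elliptic scalar operators and the relevant spectral gaps are available, so this extra bookkeeping is routine rather than substantive.
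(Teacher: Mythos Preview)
Your overall scheme matches the paper's: the substitution $y = e^{tA_\infty}(v+h)$, an integral fixed-point for the remainder in a weighted $L^\infty$ space on $[T,\infty)$, backward continuation to $t\ge 0$, and the limit argument for linear independence. On one point you are more careful than the paper: you correctly derive $h' = B(t)(v+h)$ with the conjugated perturbation $B(t) = e^{-tA_\infty}\bigl(A(t)-A_\infty\bigr)e^{tA_\infty}$, whereas the paper writes the transformed equation as $z' = (A(t)-A_\infty)z$, tacitly dropping the conjugation.

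There is, however, a genuine gap in your first paragraph. The phrase ``at-most-polynomial growth of $e^{\pm tA_\infty}$ on each generalised eigenspace'' does not control $B(t)$: the conjugation $e^{-tA_\infty}Me^{tA_\infty}$ mixes eigenspaces, and its $(i,j)$-block carries a factor $e^{(\mu_j-\mu_i)t}$, which is genuinely exponential whenever $\real(\mu_i)\neq\real(\mu_j)$. Hence $\norme{B(t)}\le C'e^{-\alpha t}$ is false in general, and shrinking $\alpha$ does not repair it. You recognise this in your final paragraph, and the blockwise Levinson-type argument you sketch there is indeed the correct remedy; but be aware that it delivers solutions of the form $y_i(t)=e^{\mu_i t}\bigl(v_i+o(1)\bigr)$ for \emph{eigenvectors} $v_i$ of $A_\infty$, which is not the lemma's literal statement for an arbitrary basis with the full matrix exponential in front. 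Since the paper only ever applies the lemma with the $v_i$ taken as eigenvectors of the asymptotic companion matrices, this is exactly what is needed; your write-up should make that restriction explicit rather than presenting the blockwise fix as a routine patch to the stated generality.
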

We delay the proof to the later \cref{ss:proof_lemma_ODE}, and apply this Lemma to the matrix $A^\symbsh(\lambda, x)$, for $\lambda$ to the right of $\spectre(\Lv)$. There, eigenvalues of $A^{\symbsh, \pm}$ are distinct and holomorphic. Since asymptotic matrices are companion, they admit a basis $\left(v_i^{\symbsh, \pm}(\lambda)\right)$ of holomorphic eigenvectors.
This allows to define two basis of solutions $(\phi^{\symbsh, +}_i(\lambda, \cdot))_{1\leq i \leq 4}$ and $(\phi^{\symbsh, -}_i(\lambda, \cdot))_{1\leq i \leq 4}$ for the ODE $(\lambda - \Lv) \phi = 0$, with exponential behavior at $+\infty$ or $-\infty$:
\begin{equation*}
\phi_i^{\symbsh, \pm} = e^{\nu_i^{\symbsh, \pm} x} (v_i^{\symbsh, \pm} + e^{-r \absolu{x}} \kappa_i^{\symbsh, \pm}(x)),
\end{equation*}
where $\nu_i^{\symbsh, +}$ is the eigenvalue associated to $v_i^{\symbsh, +}$, and $\kappa_i^{\symbsh, +} \in L^\infty(\R_+)$. 
We use similar notations for $\Lu$.
\begin{lemma}
\label{l:spatial_eigenvalues_localization}
The dispersion relation $\lambda - \Lu[,\pm](\nu) = 0$ and $\lambda - \Lv[,\pm](\nu) = 0$ are respectively second order and fourth order polynomial in $\nu$. Their respective roots $\nu_i^{\symbkpp, \pm}(\lambda)$ and $\nu_i^{\symbsh, \pm}(\lambda)$ have the following localization:
\begin{enumerate}[label=(\arabic*)]
\item\label{i:spectral_gap} (spectral gap away from the spectrum)
There exists $\kappa_2>0$ such that for all $\lambda \in \C$ with $\real{\lambda} \geq -2\eta$, 
\begin{equation*}
\absolu{\real{\nu}}\geq \kappa_2,
\end{equation*}
where $\nu$ stands for $\nu_i^{\symbsh, \pm}(\lambda)$ or $\nu_i^{\symbkpp, -}(\lambda)$.
\item \label{i:pinched_double_root} (pinched double root at the origin) Let $I = (-\infty, 0) \subset\C$, and $V$ be a neighborhood of $0\in \C$. There exists $C$ positive such that for $\lambda \in V\backslash I$, and $\lambda \to 0$, 
\begin{equation*}
\nu_1^{\symbkpp, +}(\lambda) \sim - C\sqrt{\lambda}, \hspace{4em}
\nu_2^{\symbkpp, +}(\lambda) \sim C\sqrt{\lambda}.
\end{equation*}
\item \label{i:elliptic_operators} (elliptic operators) There exists $R$, $C$, $\theta$ positive constant such that for $\lambda\in \C$ with $\absolu{\lambda}\geq R$ and $\real{\lambda} \leq - \theta \absolu{\imag{\lambda}}$,
\begin{equation*}
\absolu{\real{\nu_i^{\symbkpp,\pm}(\lambda)}} \geq C \absolu{\lambda}^{1/2},
\hspace{4em}
\absolu{\real{\nu_i^{\symbsh,\pm}(\lambda)}} \geq C \absolu{\lambda}^{1/4}.
\end{equation*}
\end{enumerate}
\end{lemma}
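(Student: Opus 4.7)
My plan is to reduce all three claims to explicit algebra on the dispersion polynomials. From the conjugation computation carried out in the proof of \cref{p:essential_spectrum}, each asymptotic symbol has an explicit polynomial form: $\Lu[,+](\nu) = d\nu^2$, $\Lu[,-](\nu) = d(\theta+\nu)^2 + c_*(\theta+\nu) - 2\alpha$, and analogous quartic expressions for $\Lv[,\pm]$. In every case $\lambda = \Cal{L}^{i,\pm}(\nu)$ is a polynomial of degree $2$ or $4$ in $\nu$ with nonzero leading coefficient, so its roots $\nu_j^{i,\pm}(\lambda)$ depend continuously on $\lambda$ and $|\nu_j^{i,\pm}(\lambda)| \to +\infty$ as $|\lambda|\to+\infty$.

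For \ref{i:spectral_gap}, I would argue by contradiction and compactness. Suppose no such $\kappa_2 > 0$ existed; then one could extract a sequence $(\lambda_n, \nu_n)$ with $\real{\lambda_n} \geq -2\eta$, $\real{\nu_n} \to 0$, and $\lambda_n = \Cal{L}^{i,\pm}(\nu_n)$. The leading $d\nu^2$ or $-\nu^4$ term of $\Cal{L}^{i,\pm}$ would force $\real{\lambda_n} \to -\infty$ as soon as $|\imag{\nu_n}| \to +\infty$, so $\imag{\nu_n}$ must remain bounded. After extraction $\nu_n \to i\xi_\infty$ and $\lambda_n \to \Cal{L}^{i,\pm}(i\xi_\infty)$, which lies on the Fredholm border and is contained in $\Set{\real\lambda \leq -3\eta}$ by \cref{p:essential_spectrum} in each of the four cases relevant to \ref{i:spectral_gap}. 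This contradicts $\real{\lambda_n} \geq -2\eta$, and taking $\kappa_2$ as the infimum of $|\real\nu|$ on the resulting compact locus will conclude the argument.

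For \ref{i:pinched_double_root}, the dispersion for $\Lu[,+]$ reduces to $\lambda = d\nu^2$, which I would directly invert as $\nu_{1,2}^{\symbkpp,+}(\lambda) = \mp\sqrt{\lambda/d}$ using the principal branch, holomorphic on $\C\setminus (-\infty, 0]$ and giving the stated equivalents with $C = d^{-1/2}$.

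For \ref{i:elliptic_operators}, I would begin from the top-order asymptotics $\nu \sim \pm(\lambda/d)^{1/2}$ for \kpp symbols and $\nu \sim \zeta\,|\lambda|^{1/4}$, with $\zeta^4 \in \Set{-1,+1}$, for \sh symbols. Substituting $\nu = \nu_\infty(\lambda)(1 + o(1))$ into the dispersion relation and using the implicit function theorem would make the error uniform in $|\lambda|$. The hard part will be verifying that $|\real\nu|$ retains the correct size throughout the prescribed sector: the real part of a square (resp.\ fourth) root of $\lambda$ vanishes at specific values of $\arg\lambda$, so each branch has to be tracked explicitly, the four \sh branches enumerated, and the angular constraint on $\lambda$ converted to an angular constraint on $\nu$ keeping it away from the imaginary axis. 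This branch-tracking is the principal technical work of the lemma.
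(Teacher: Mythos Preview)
Your proposal is correct and follows essentially the same lines as the paper. For \ref{i:spectral_gap} and \ref{i:pinched_double_root} the paper simply declares them ``easily obtained''; your compactness argument for \ref{i:spectral_gap} and direct inversion for \ref{i:pinched_double_root} are exactly the kind of verification intended. For \ref{i:elliptic_operators} the paper carries out what you describe as top-order asymptotics, but packages it as a rescaling of the ODE: setting $\psi(x) = \phi(x|\lambda|^{1/4})$ reduces the \sh dispersion to a perturbation of $\lambda + |\lambda|\partial_x^4\psi = 0$, whose spatial eigenvalues are $\tilde\nu = z(\lambda/|\lambda|)^{1/4}$ with $z$ a fourth root of $-1$. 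Since $|\real z| = 1/\sqrt 2$ for all four roots, restricting $|\arg\lambda| \leq 3\pi/4$ keeps each $\tilde\nu$ uniformly away from the imaginary axis, and unscaling gives the bound. This is the same content as your implicit-function-theorem route, but it makes the branch tracking you flagged as ``principal technical work'' entirely explicit and short; you may find it cleaner than chasing the IFT error terms.
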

\begin{proof}
\Cref{i:spectral_gap,i:pinched_double_root} are easily obtained. \Cref{i:elliptic_operators} relies on a scaling argument, see \cite[Lemma 3.1]{Faye_Holzer_18} for $\nu_i^{\symbkpp, \pm}$, the case of $\nu_i^{\symbsh, \pm}$ adapts as follow. First set $\psi(x) := \phi(x\absolu{\lambda}^{1/4})$, and apply \cref{l:ODE_solutions} to construct solutions that are close to solutions of $\lambda + \absolu{\lambda}\partial_x^4 \psi = 0$. In particular, the asymptotic matrix eigenvalues for $\psi$ are $\tilde{\nu}(\lambda) := z \sqrt[4]{\lambda}/\absolu{\lambda}$, where $z \in \Set{e^{i\frac{\pi}{4}}, e^{-i\frac{\pi}{4}}, e^{i\frac{3\pi}{4}}, e^{-i\frac{3\pi}{4}}}$. Notice that $\absolu{\real{z}} = 1/\sqrt{2}$, hence we get $\absolu{\real{\tilde{\nu}}} \geq C$, by restricting to $\lambda$ of the form $re^{i\theta}$, with $\absolu{\theta} \leq 3\pi /4$. Reverting back to the  original variable, the asymptotic matrix eigenvalues for $\phi$ satisfy $\absolu{\real{\nu}} \geq C \absolu{\lambda}^{1/4}$.
\end{proof}

In the following, we will compute determinant of several matrices, whose columns depend on $\phi_i^{\symbsh, \pm}$ or $\phi_i^{\symbkpp, \pm}$. Suppose we are given $n$ scalar functions $\phi_1, \dots, \phi_n$ depending on the space variable $x$. Then, we write 
\begin{equation}
\label{e:notation_determinant}
\Det(\phi_1, \dots, \phi_n) := \det 
\begin{pmatrix}
\phi_1 & \cdots & \phi_n\\
\vdots & & \vdots\\
\partial_x^{n-1} \phi_1 & \cdots & \partial_x^{n-1} \phi_n
\end{pmatrix} = \det\left(\left(\partial_x^{j-1}\phi_i\right)_{1\leq i,j\leq n}\right).
\end{equation}
where \guillemet{$\det$} stands for the determinant. We also introduce the wronskian function composed of all decaying solutions:
\begin{equation*}
\label{e:def_wronskian}
\Wu := \Det(\phi_1^{\symbkpp, +}, \phi_2^{\symbkpp, -}), 
\hspace{4em}
\Wv := \Det(\phi_1^{\symbsh, +}, \phi_2^{\symbsh, +}, \phi_3^{\symbsh, -}, \phi_4^{\symbsh, -}).
\end{equation*}
For each $\lambda \in \C$, the function $y\mapsto \Wu(\lambda, y)$ is either identically zero or does not vanish, see \cref{l:ode_wronskien}. The particular value $\Wu(\lambda, 0)$ is often called the Evans function. When $\lambda$ is such that $\phi_1^{\symbkpp, +}$ and $\phi_2^{\symbkpp, -}$ are decaying at $x\to \pm\infty$,\footnote{When $\lambda$ lies to the right of $\spectre[ess](\Lu)$, this decay property hold.} $\Wu(\lambda, 0)$ vanishes exactly when $\lambda$ is an eigenvalue for $\Lu$, with same multiplicity. The same goes for $\Wv$.

\subsection{Construction and estimations of the kernel for the resolvent operator}
\label{ss:construction_spectral_Green}
First, we construct and control the Green function $\Glu(x,y)$, which corresponds to the pure \kpp equation. Remark that such a result was already proved in \cite[Lemma 3.2]{Faye_Holzer_18}. We rewrite it in a more condensed form that separates the behaviors at $+\infty$ and $-\infty$. See the further \cref{r:extra_spatial_decay}.

\begin{proposition}
\label{p:control_G_lambda_11} Let $K\subset \C$ be a compact set to the right of $\spectre(\Lu[,-])$, and note $I = (-\infty, 0)$ the real negative axis that corresponds to the absolute spectrum of $\Lu$. Then $\lambda \mapsto \Glu(x, y)$ is holomorphic from $K\backslash I$ to $\R$. Furthermore, there exists positive constants $\kappa_1$, $C$ such that for all $\lambda \in K\backslash I$ and $0 \leq j \leq 2$, we have the following.
If $x\geq 0$ or $y\geq 0$,
\begin{equation*}
\label{e:majoration_G_lambda_11_xypositif}
\Absolu{\Glu(x,y)} \leq C e^{-\real{\sqrt{\lambda}}\absolu{x-y}} \spacemath  \omega_{\kappa_1, 0}(x) \spacemath \omega_{\kappa_1, 0}(y).
\end{equation*}
If on the contrary, $x\leq 0$ and $y\leq 0$, then
\begin{equation*}
\label{e:majoration_G_lambda_11_xynegatif}
\Absolu{\Glu(x,y)} \leq C e^{-\kappa_1\absolu{x-y}}.
\end{equation*}
\end{proposition}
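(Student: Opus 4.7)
The plan is to write $\Glu$ explicitly via variation of parameters, then extract the two regimes by combining the asymptotic expansion of the decaying solutions with the spatial-eigenvalue localisations of \cref{l:spatial_eigenvalues_localization}.

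First, since $\Lu$ is a second order operator with leading coefficient $d$, \cref{l:ODE_solutions} furnishes two holomorphic families of solutions $\phi_1^{\symbkpp,+}(\lambda,\cdot)$ decaying at $+\infty$ and $\phi_2^{\symbkpp,-}(\lambda,\cdot)$ decaying at $-\infty$. A standard jump computation at $x=y$ gives
\begin{equation*}
\Glu(x,y) = -\frac{1}{d\,\Wu(\lambda,y)}
\begin{cases}
\phi_1^{\symbkpp,+}(x)\,\phi_2^{\symbkpp,-}(y), & x\geq y,\\
\phi_1^{\symbkpp,+}(y)\,\phi_2^{\symbkpp,-}(x), & x\leq y,
\end{cases}
\end{equation*}
where $\Wu(\lambda,\cdot)$ is the associated Wronskian. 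Holomorphy of $\lambda\mapsto\Glu(x,y)$ on $K\backslash I$ then follows from the holomorphy asserted in \cref{l:ODE_solutions} together with the absence of eigenvalues outside $(-\infty,0)$ given by \cref{p:point_spectrum}, which guarantees that the Evans function $\Wu(\lambda,\cdot)$ does not vanish there.

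Second, I would extract the bounds from the asymptotic form $\phi_i^{\symbkpp,\pm}(x) = e^{\nu_i^{\symbkpp,\pm}(\lambda) x}(v_i + e^{-r|x|}\kappa_i(x))$. For the regime $x\geq 0$ or $y\geq 0$, only $\nu_1^{\symbkpp,+}$ is small: by \cref{l:spatial_eigenvalues_localization}\ref{i:pinched_double_root} it behaves like $-\sqrt{\lambda/d}$ near the origin, which (after absorbing $d$ into $\kappa_1$) produces the carrier factor $e^{-\real\sqrt{\lambda}|x-y|}$. The remaining spatial eigenvalues ($\nu_2^{\symbkpp,+}$ and $\nu_i^{\symbkpp,-}$) have real part bounded away from zero by $\kappa_2$ (\cref{l:spatial_eigenvalues_localization}\ref{i:spectral_gap}); using the ODE to extend each $\phi_i^{\symbkpp,\pm}$ past the origin and tracking signs of the exponents yields the extra factors $\omega_{\kappa_1,0}(x)\,\omega_{\kappa_1,0}(y)$, where $\kappa_1\leq\kappa_2$. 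In the regime $x,y\leq 0$, the small eigenvalue $\nu_1^{\symbkpp,+}$ never intervenes: both $\phi_1^{\symbkpp,+}(x)$ and $\phi_2^{\symbkpp,-}(x)$ are controlled by spatial eigenvalues with uniform spectral gap, so the Cramer-like formula above gives directly the pure exponential decay $e^{-\kappa_1|x-y|}$.

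Third, the key remaining point is a uniform lower bound on $\Wu(\lambda,y)$. Since $\Wu(\lambda,y)/\Wu(\lambda,0)$ depends only on the first order coefficient of $\Lu$ (a bounded perturbation of the asymptotic exponent), it suffices to bound $\Wu(\lambda,0)$ from below. On any compact subset of $K\backslash I$ bounded away from $0$, non-vanishing and continuity of the Evans function suffice. Near the branch point $\lambda=0$, the delicate issue is that the two $+\infty$ spatial eigenvalues $\nu_1^{\symbkpp,+},\nu_2^{\symbkpp,+}$ collide at a pinched double root, but the minor $\Wu$ itself involves $\phi_1^{\symbkpp,+}$ paired with $\phi_2^{\symbkpp,-}$, not with $\phi_2^{\symbkpp,+}$: this is precisely the situation analysed in \cite[Lemma 3.2]{Faye_Holzer_18}, where one shows that $\Wu(\lambda,0)$ has a finite non-zero limit as $\lambda\to 0$ in $K\backslash I$.

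The main obstacle is therefore exactly the control near the branch point $\lambda = 0$: verifying that the coalescence of $\nu_{1,2}^{\symbkpp,+}$ does not make the Evans function degenerate, and simultaneously tracking that the sign convention for the branch $\sqrt{\lambda}$ produces the correct decay for $\real\sqrt{\lambda}>0$. Since this has been carried out in detail in the cited reference, I would mostly refer to their argument and reorganise the output into the slightly more condensed two-case statement given above.
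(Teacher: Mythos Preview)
Your proposal is correct and follows essentially the same approach as the paper: write $\Glu$ via the variation-of-parameters formula using the decaying solutions $\phi_1^{\symbkpp,+}$ and $\phi_2^{\symbkpp,-}$, then invoke \cite[Lemma 3.2]{Faye_Holzer_18} for the detailed exponent tracking, reorganising the output into the two-regime form. The paper's own proof is in fact terser than yours---it refers almost entirely to Faye--Holzer and only points out that the spatial eigenvalues to the right of $\spectre(\Lu[,-])$ are dominated by $e^{-\real\sqrt{\lambda}|x-y|}$ and that the error terms produce the weights $\omega_{\kappa_1,0}$---so your additional discussion of the Wronskian lower bound near the branch point is a welcome elaboration rather than a deviation.
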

\begin{proof}
We use the decaying ODE solution $\phi_2^{\symbkpp, -}$, $\phi_1^{\symbkpp, +}$ constructed above. To keep notation as light as possible, we write them $\phi^-$ and $\phi^+$ respectively. Similarly, we note $\psi^- = \phi^{\symbkpp, -}_1$ and $\psi^+ = \phi^{\symbkpp, +}_2$ the growing ODE solutions. Remind that both $(\phi^-, \psi^-)$ and $(\phi^+, \psi^+)$ are basis for solution of the ODE $(\lambda - \Lu) \varphi = 0$. The Green function expresses as 
\begin{equation*}
\Glu(x, y) = \frac{1}{\Wu(\lambda, y)} 
\begin{cases}
\phi^-(y) \phi^+(x) & \text{ if } y<x, \\
\phi^-(x) \phi^+(y) & \text{ if } x<y.
\end{cases}
\end{equation*}
We begin by the case $x\geq 0$ or $y\geq 0$. For $\lambda$ to the right of $\spectre(\Lu[,-])$, the spatial eigenvalues are such that the exponential obtained in \cite[Lemma 3.2]{Faye_Holzer_18} are smaller than $e^{-\absolu{x-y}\real{\sqrt{\lambda}}}$. Then each $\Cal{O}$ leads to our weights $\omega_{\kappa_1, 0}(x) \omega_{\kappa_1, 0}(y)$. The case $x\leq 0$ and $y\leq 0$ is in \cite[Lemma 3.2]{Faye_Holzer_18}.
\end{proof}

In the following, we will also need to bound the derivatives of the spectral Green kernel. Since our problem is parabolic, to differentiate the semigroup must gains us extra temporal decay. At spectral level, it converts into extra power of $\sqrt{\lambda}$.
Due to $\Glu$ being a kernel, Dirac delta correction terms appear when differentiating more than the order of $\Lu$. However, they will be easily absorbed at temporal level.

\begin{proposition}
\label{p:control_partial_G_lambda_11} Let $K\subset \C$ be a compact set to the right of $\spectre(\Lu[,-])$, and note $I = (-\infty, 0)$ the real negative axis that corresponds to the absolute spectrum of $\Lu$. Fix an integer $j\geq 1$. There exists a sum of Dirac delta derivatives:
\begin{equation*}
P_j(\delta_{x=y}) = \sum_{k = 0}^{j-2} a_k^j(\lambda, y) \delta_{x = y}^{(k)}
\end{equation*}
with coefficients $a_k$ holomorphic \wrt $\lambda \in K\backslash I$ and bounded with respect to $y\in\R$, such that $\lambda \mapsto \partial_x^j\Glu(x, y) - P_j(\delta_{x = y})$ is holomorphic from $K\backslash I$ to $\R$. Furthermore, there exists positive constants $\kappa_1$, $C$ such that for all $\lambda \in K\backslash I$, we have the following. 
If $x\geq 0$ or $y\geq 0$,
\begin{equation*}
\label{e:majoration_partial_G_lambda_11_xypositif}
\Absolu{\partial_x^j \Glu(x,y) - P_j(\delta_{x = y})} \leq C \absolu{\lambda}^{j/2} e^{-\real{\sqrt{\lambda}}\absolu{x-y}} \spacemath  \omega_{\kappa_1, 0}(x) \spacemath \omega_{\kappa_1, 0}(y).
\end{equation*}
If on the contrary, $x\leq 0$ and $y\leq 0$, then
\begin{equation*}
\label{e:majoration_partial_G_lambda_11_xynegatif}
\Absolu{\partial_x^j \Glu(x,y) - P_j(\delta_{x = y})} \leq C \absolu{\lambda}^{j/2} e^{-\kappa_1\absolu{x-y}}.
\end{equation*}
\end{proposition}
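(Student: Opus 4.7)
The plan is to bootstrap from \cref{p:control_G_lambda_11}: everything follows once we control $\partial_x^j$ applied to the ODE building blocks $\phi^{\symbkpp,\pm}$, and carefully track the Dirac masses produced when differentiating the piecewise formula for $\Glu$ in the sense of distributions.

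First I would compute the smooth part. Recall that for $x\neq y$,
\begin{equation*}
\Glu(x,y) = \frac{1}{\Wu(\lambda, y)}\left[\phi^{\symbkpp,-}(y)\phi^{\symbkpp,+}(x)\mathbf{1}_{y<x} + \phi^{\symbkpp,-}(x)\phi^{\symbkpp,+}(y)\mathbf{1}_{x<y}\right],
\end{equation*}
so away from the diagonal, $\partial_x^j \Glu$ is obtained by replacing the $x$-dependent factor by its $j$-th derivative. Because $\phi^{\symbkpp,\pm}$ solves a second-order ODE, all higher derivatives lie in $\Span(\phi^{\symbkpp,\pm}, \partial_x\phi^{\symbkpp,\pm})$ with coefficients polynomial in $\lambda$. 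Writing $\phi^{\symbkpp,\pm}(x) = e^{\nu^{\symbkpp,\pm} x}(v^{\symbkpp,\pm} + e^{-r|x|}\kappa^{\symbkpp,\pm}(x))$ from \cref{l:ODE_solutions}, one gets the pointwise bound $|\partial_x^j \phi^{\symbkpp,\pm}(x)| \leq C(1+|\nu^{\symbkpp,\pm}|)^j |\phi^{\symbkpp,\pm}(x)|$, the correction from $\kappa^{\symbkpp,\pm}$ being absorbed into the $e^{-r|x|}$ remainder. By \cref{l:spatial_eigenvalues_localization}\ref{i:elliptic_operators}, $|\nu^{\symbkpp,\pm}|\leq C|\lambda|^{1/2}$ for large $|\lambda|$ (with a trivial uniform bound on the bounded-$\lambda$ regime). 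Plugging this into the bounds already established in the proof of \cref{p:control_G_lambda_11} gives exactly the announced estimate with the extra prefactor $|\lambda|^{j/2}$, both in the region $\{x\geq 0\}\cup\{y\geq 0\}$ and in the region $\{x\leq 0,\, y\leq 0\}$.

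Next I would identify the Dirac corrections by induction on $j$. The base case $j=1$ is free: $\Glu(\cdot,y)$ is continuous across $x=y$, so $\partial_x \Glu$ is locally bounded and $P_1$ is the empty sum. For $j\geq 2$, write
\begin{equation*}
\partial_x^j \Glu = \partial_x\bigl(G_{j-1} + P_{j-1}(\delta_{x=y})\bigr) = \partial_x G_{j-1} + \sum_{k=0}^{j-3} a_k^{j-1}(\lambda,y)\delta_{x=y}^{(k+1)}.
\end{equation*}
The distributional derivative of the smooth remainder $G_{j-1}$ splits into a new pointwise derivative $G_j$ plus a Dirac mass whose coefficient is the jump $[G_{j-1}]_{x=y^-}^{x=y^+}$. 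This jump is computed directly from the explicit piecewise formula, and equals a fixed linear combination of $\phi^{\symbkpp,\pm}(y)$ and their derivatives up to order $j-1$ divided by $\Wu(\lambda,y)$. Since $\Wu(\lambda,\cdot)$ is bounded away from zero on $K\setminus I$ (it only vanishes at eigenvalues, excluded by \cref{p:point_spectrum}) and since $\phi^{\symbkpp,\pm}(y)$ together with their derivatives are holomorphic in $\lambda$ and bounded in $y$ (from the asymptotic form and boundedness of $\kappa^{\symbkpp,\pm}$), the coefficient $a_0^j(\lambda,y)$ has the required regularity. Combining with the shift of the pre-existing $P_{j-1}$ produces $P_j$ with exactly the order range $0\leq k\leq j-2$ announced in the statement.

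The main obstacle will be the bookkeeping at the intersection of these two steps: one must verify that the pointwise remainder $G_j = \partial_x^j \Glu - P_j(\delta_{x=y})$ really inherits the bound $|\lambda|^{j/2}\,e^{-\real\sqrt\lambda |x-y|}\omega_{\kappa_1,0}(x)\omega_{\kappa_1,0}(y)$ uniformly, and not some slightly worse weight, when the pieces on $\{y<x\}$ and $\{x<y\}$ are reassembled. Concretely, one has to check that differentiating the correction $e^{-r|x|}\kappa^{\symbkpp,\pm}(x)$ does not produce terms that violate the $|\lambda|^{j/2}$ scaling; this amounts to controlling $\partial_x^\ell \kappa^{\symbkpp,\pm}$ uniformly in $\lambda$, which follows from differentiating the fixed-point equation in the proof of \cref{l:ODE_solutions} and noting that each derivative brings either a bounded quantity or a factor absorbed by $\nu^{\symbkpp,\pm}$. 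Once this technical point is handled, the two regimes $\{x\geq 0\text{ or }y\geq 0\}$ and $\{x\leq 0,\, y\leq 0\}$ split exactly as in \cref{p:control_G_lambda_11}, and the proof is complete.
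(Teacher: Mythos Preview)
Your overall strategy matches the paper's: differentiate the piecewise formula for $\Glu$, track the Dirac masses produced by the jumps (your induction for $P_j$ is equivalent to the paper's recursion), and bound the regular part by controlling $\partial_x^j\phi^{\symbkpp,\pm}$. So the architecture is fine.

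There is, however, a genuine slip in the quantitative step. You bound $|\partial_x^j\phi^{\symbkpp,\pm}(x)|\leq C(1+|\nu^{\symbkpp,\pm}|)^j|\phi^{\symbkpp,\pm}(x)|$ and then claim this ``gives exactly the announced estimate with the extra prefactor $|\lambda|^{j/2}$''. It does not: on the compact set $K$ near $\lambda=0$ the spatial eigenvalue $\nu^{\symbkpp,+}\sim\sqrt{\lambda/d}$ vanishes, so $(1+|\nu^{\symbkpp,+}|)^j\to 1$, not $|\lambda|^{j/2}\to 0$. Your prefactor is $O(1)$, which is strictly weaker than what the proposition asserts. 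The paper avoids this by reading off the $\sqrt{\lambda}$ factor \emph{directly}: for $x\geq 0$ it writes $\partial_x\phi^+(x)=-\sqrt{\lambda}\,e^{-\sqrt{\lambda}x}(1+\kappa(x))$, so the factor is $|\nu^+|=|\sqrt{\lambda}|$ itself, not $(1+|\nu^+|)$. To repair your argument you must separate the leading contribution $\nu^j e^{\nu x}$ (which carries exactly $|\lambda|^{j/2}$) from the correction $e^{\nu x}\partial_x^j(e^{-r|x|}\kappa)$, and argue that the latter is subsumed into the $\omega_{\kappa_1,0}$ weight rather than into the prefactor.

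A second point you leave implicit is the cross-over: when $x\leq 0$ you need $\partial_x^j\phi^+$ expressed through the $-\infty$ basis $(\phi^-,\psi^-)$. You correctly say the $j=0$ proof already handles this, but the paper carries out the projection explicitly for the derivative, and this is where the claimed $|\lambda|^{1/2}$ factor in the region $\{x\leq 0,\,y\leq 0\}$ is asserted to come from. Your ``main obstacle'' paragraph focuses on $\partial_x^\ell\kappa$, which is a real but secondary issue; the projection onto the opposite basis is at least as important and deserves a sentence.
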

\begin{proof}
For the case $j = 1$ no correction terms appear, and the proof is close to \cite{Faye_Holzer_18}. We compute
\begin{equation*}
\partial_x \Glu(\cdot, y) = \frac{1}{\Wu_\lambda(y)} 
\begin{cases}
\phi^-(y) \partial_x \phi^+ & \text{ on } (y, + \infty)\\
\phi^+(y) \partial_x \phi^- & \text{ on } (- \infty, y)\\
\end{cases}
\end{equation*}
Begin first with case $x > y$.
When $x\geq 0$, we use $\partial_x \phi^+(x) = -\sqrt{\lambda} e^{-\sqrt{\lambda}}(1 + \kappa(x))$ rather than $\phi^+(x) = e^{-\sqrt{\lambda}}(1 + \kappa(x))$, which gains us the claimed $\absolu{\lambda}^{1/2}$ in comparison with the case $j = 0$. When $x\leq 0$, we project onto the $(\phi^-, \psi^-)$ basis:
\begin{equation*}
\partial_x \phi^+(x) = \frac{\Det(\partial_x \phi^+, \psi^-)}{\Det(\phi^-, \psi^-)} \phi^-(x) + \frac{\Det(\phi^-, \partial_x \phi^+)}{\Det(\phi^-, \psi^-)} \psi^-(x),
\end{equation*}
we refer to the above mentioned reference for more details, or to the proof of \cref{p:control_G_lambda_22}, see \cref{ss:proof_Glv}. 
Since both $\phi^-$ and $\psi^-$ are bounded by $e^{\kappa_1 x}$, and using that $\Det(\phi^-, \psi^-) > 0$ uniformly \wrt $\lambda$, we get to $\absolu{\partial_x \phi^+(x)} \leq C \absolu{\lambda}^{1/2} e^{\kappa_1 x}$.
This conclude the case $j = 1$. 

When $j = 2$, we compute
\begin{equation*}
\partial_x^2 \Glu(\cdot, y) = \delta_y + \frac{1}{\Wu_\lambda(y)} 
\begin{cases}
\phi^-(y) \partial_x^2 \phi^+ & \text{ on } (y, + \infty),\\
\phi^+(y) \partial_x^2 \phi^- & \text{ on } (- \infty, y),\\
\end{cases}
\end{equation*}
and similar computations as above show the claimed estimate for  $\partial_x^2 \Glu(x,y) - \delta_y$.

For $j \geq 2$, the correction terms can be computed recursively:
\begin{equation*}
P_{j+1} (\delta_{x = y}) = \delta_{x = y} \frac{\phi^- \partial_y^{j+1} \phi^+ - \phi^+ \partial_y^{j+1} \phi^-}{W_\lambda(y)}+ \sum_{k = 1}^{j-1} a_k^j(y) \delta_{x = y}^{(k)}
\end{equation*}
Then, $\partial_x^j \Glu(x,y) - P_j(\delta_{x = y})$ is handled as above.
\end{proof}
We state a similar result for $\Glv$. Since $\Lv$ is a fourth order operator, the proof slightly differs. Due to its length, we present the proof in the later  \cref{ss:proof_Glv}. Note that \cite{Howard_Hu_04} presents similar computations in a viscous shock front context.
\begin{proposition}
\label{p:control_G_lambda_22}
Let $K\subset \C$ be a compact set to the right of $\spectre(\Lv[,-]_\theta) \cup \spectre(\Lv[,+])$, for example $K\subset \Set{\real(\lambda) \geq -2 \eta}$. Then $\lambda \mapsto \Glv(\cdot,y)$ is holomorphic from $K$ to $H^4(\R\backslash \lbrace y \rbrace)$, and there exists $C, \kappa_2 > 0$ such that the following holds. For all $x,y\in\R^2$, and $\lambda\in K$:
\begin{equation*}
\Absolu{\Glv(x,y)} \leq C e^{-\kappa_2\absolu{x-y}}.
\end{equation*}
Furthermore, if $x\geq 0$ or $y\geq 0$, then we have:
\begin{equation}
\label{e:control_G_lambda_22_extra}
\Absolu{\Glv(x,y)} \leq C e^{-\kappa_2\absolu{x-y}} \omega_{\kappa_2, 0}(y).
\end{equation}
\end{proposition}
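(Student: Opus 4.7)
The plan is to mirror the variation-of-parameters construction used for $\Glu$ in \cref{p:control_G_lambda_11}, but working with four decaying ODE solutions since $\Lv$ is fourth order. First I would write
\begin{equation*}
\Glv(x,y) =
\begin{cases}
\alpha_1(y)\,\phi_1^{\symbsh,+}(x) + \alpha_2(y)\,\phi_2^{\symbsh,+}(x), & y < x,\\
\alpha_3(y)\,\phi_3^{\symbsh,-}(x) + \alpha_4(y)\,\phi_4^{\symbsh,-}(x), & x < y,
\end{cases}
\end{equation*}
with $\phi_i^{\symbsh,\pm}$ the decaying basis constructed by \cref{l:ODE_solutions}. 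The coefficients $\alpha_i(y)$ are fixed by the three continuity conditions at $x=y$ (on $\Glv$, $\partial_x \Glv$, $\partial_x^2 \Glv$) and the jump of size $-1$ in $\partial_x^3 \Glv$ coming from the Dirac right-hand side together with the leading coefficient $b_4 = -1$ of $\Lv$. Cramer's rule then yields each $\alpha_i(y)$ as a ratio of $4\times 4$ determinants whose common denominator is precisely $\Wv(\lambda, y)$.

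Second, I would verify that $\Wv(\lambda, y)$ does not vanish on $K$, so that the construction is well posed and $\lambda \mapsto \Glv(\cdot, y)$ depends holomorphically on $\lambda \in K$. For $\lambda$ to the right of $\spectre(\Lv[,-]_\theta) \cup \spectre(\Lv[,+])$, item \ref{i:spectral_gap} of \cref{l:spatial_eigenvalues_localization} gives two spatial eigenvalues at each infinity with real parts bounded away from zero and with the correct signs, so the $\phi_i^{\symbsh,\pm}$ are genuinely decaying at $\pm\infty$ and span the $L^2$-kernel candidates. A zero of $\Wv(\lambda, \cdot)$ would then produce a nonzero element of $H^4(\R)$ annihilated by $\lambda - \Lv$, contradicting \cref{p:point_spectrum} (whose argument on the decoupled \sh component is valid throughout $K$). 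Holomorphy then follows from the holomorphic dependence of $\phi_i^{\symbsh,\pm}$ on $\lambda$ given by \cref{l:ODE_solutions}.

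Third, for the pointwise estimate, I would plug the first-order expansion
\begin{equation*}
\phi_i^{\symbsh,\pm}(x) = e^{\nu_i^{\symbsh,\pm}(\lambda)\, x}\bigl(v_i^{\symbsh,\pm} + e^{-r|x|}\kappa_i^{\symbsh,\pm}(x)\bigr)
\end{equation*}
into the expression of $\Glv(x,y)$ and use the uniform spectral gap $|\real \nu_i^{\symbsh, \pm}| \geq \kappa_2$ from \cref{l:spatial_eigenvalues_localization}\ref{i:spectral_gap}, together with the uniform lower bound on $|\Wv(\lambda, y)|$ obtained by factoring the dominant exponential behavior of the Wronskian (which is constant in $y$ up to an exponential factor by Liouville's formula, uniformly in $\lambda \in K$). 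This yields the basic bound $|\Glv(x,y)| \leq C e^{-\kappa_2 |x-y|}$.

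Finally, to obtain the enhanced weight factor $\omegav_{\kappa_2, 0}(y)$ when $x \geq 0$ or $y \geq 0$, I would track the asymptotic structure at $+\infty$ more carefully. The key point is that the $-\gamma$ shift in $\Lv[,+]$ pushes the $+\infty$ spatial eigenvalues further from the imaginary axis than the $-\infty$ ones, and since the weight $\omegav$ is trivial on $[1, +\infty)$, the decaying solutions $\phi_i^{\symbsh,+}$ have stronger exponential decay rate on $\R_+$ than the one used for the basic bound. Splitting into the subcases $x,y \geq 0$, $x \geq 0 \geq y$, and $y \geq 0 \geq x$, I would, where necessary, reproject the opposite-side solutions onto the appropriate $(\phi^\pm, \psi^\pm)$ basis, exactly as in the $j=1$ step of \cref{p:control_partial_G_lambda_11}, to surface the additional factor $\omegav_{\kappa_2, 0}(y)$.

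The main obstacle will be the bookkeeping in the $4\times 4$ Cramer determinants: one has to pair each decaying solution with the correct growing solutions from the opposite half-line so that in every region the sum of spatial eigenvalues in the numerator cancels exactly that of the denominator, leaving only the desired $e^{-\kappa_2|x-y|}$ without spurious exponential growth. The enhancement at $+\infty$ is the most delicate point, since it requires changing basis between the $\pm$-decaying families while preserving the uniform-in-$\lambda$ estimates.
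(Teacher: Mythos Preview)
Your overall strategy matches the paper's: write $\Glv$ via the four decaying solutions, solve the jump system by Cramer's rule, and control the result case by case according to the position of $x,y$ relative to $0$, reprojecting onto the opposite-side basis when needed. Two points deserve attention.

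First, the ``bookkeeping'' you flag hides a genuine cancellation that you have not named. In the case $y<x<0$ (and its mirror), after projecting both $\phi_1^+$ and $\phi_2^+$ onto the $(\phi_i^-)$ basis, the individual contributions $\phi_1^+ b_1$ and $\phi_2^+ b_2$ contain terms behaving like $e^{\nu_1^- x - \nu_2^- y}$ and $e^{\nu_2^- x - \nu_1^- y}$, which do \emph{not} decay like $e^{-\kappa_2|x-y|}$ on their own. The paper observes that these ``asymmetric'' terms appear with opposite signs in $b_1$ and $b_2$ (trace back the minus sign in Cramer's rule) and cancel in the sum $\phi_1^+ b_1 + \phi_2^+ b_2$. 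Without isolating this cancellation explicitly, the naive term-by-term estimate fails in that region.

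Second, your plan for the refined bound \eqref{e:control_G_lambda_22_extra} via the $-\gamma$ shift and further reprojection is unnecessarily heavy. The paper's argument is a one-line algebraic trick: once $|\Glv(x,y)|\le Ce^{-\kappa_2|x-y|}$ is known, observe that for $y\ge 0$ the factor $\omega_{\kappa_2,0}(y)=1$ adds nothing, while for $y\le 0\le x$ one simply splits
\[
e^{-\kappa_2(x-y)} = e^{-\frac{\kappa_2}{2}(x-y)}\, e^{\frac{\kappa_2}{2}y}\, e^{-\frac{\kappa_2}{2}x} \le e^{-\frac{\kappa_2}{2}|x-y|}\,\omega_{\kappa_2/2,0}(y),
\]
and renames $\kappa_2 \leftarrow \kappa_2/2$. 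No additional ODE analysis is required.
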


We can now obtain a statement on $\Gluv$, similar to the above \cref{p:control_G_lambda_11}. We use the structure of the equation satisfied by $\Gluv$ to directly transfer the spatial decay from $\Glu$ to $\Gluv$.
\begin{proposition}
\label{p:control_G_lambda_12}
Let $K\subset \C$ be a compact set to the right of $\spectre(\Lu[,-]) \cup \spectre(\Lv[,-]_\theta) \cup \spectre(\Lv[,+])$, and note $I = (-\infty, 0)$.
Then $\lambda \mapsto \Gluv(\cdot, y)$ is holomorphic from $K\backslash I$ to $H^2(\R\backslash \lbrace y \rbrace)$. Furthermore, there exists $\kappa_3, C > 0$ such that for all $\lambda \in K\backslash I$, we have the following. If $x\geq 0$ or $y\geq 0$, then:
\begin{equation*}
\label{e:majoration_G_lambda_12_xypositif}
\absolu{\Gluv(x,y)} \leq C e^{-\real{\sqrt{\lambda}}\absolu{x-y}} \spacemath \omega_{\kappa_3, 0}(x)\spacemath\omega_{\kappa_3, 0}(y).
\end{equation*}
If on the contrary $x\leq 0$ and $y\leq 0$,
\begin{equation*}
\label{e:majoration_G_lambda_12_xynegatif}
\absolu{\Gluv(x,y)} \leq C e^{-\kappa_3\absolu{x-y}}.
\end{equation*}
\end{proposition}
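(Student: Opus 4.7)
The plan is to start from the identity
$$\Gluv(x,y)=\beta\int_\R \Glu(x,z)\,\Glv(z,y)\,dz,$$
which is the inner-product formula advertised in \cref{ss:sketch_of_proof} and follows from inverting the inhomogeneous equation $(\lambda-\Lu)\Gluv(\cdot,y)=\beta\Glv(\cdot,y)$ against the Green kernel of $\Lu$. Holomorphy of $\lambda\mapsto\Gluv(x,y)$ on $K\setminus I$ will then follow from the joint holomorphy of $\Glu$ and $\Glv$ combined with dominated convergence, using the exponential bounds of \cref{p:control_G_lambda_11,p:control_G_lambda_22} to secure absolute convergence of the $z$-integral uniformly on compact $\lambda$-subsets. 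The $H^2(\R\setminus\{y\})$ regularity in $x$ is then immediate from the inhomogeneous ODE and elliptic regularity, once $\Gluv(\cdot,y)\in L^2(\R)$ is known.

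The spatial estimates are obtained by splitting the $z$-integral according to the sign of $z$ and applying \cref{p:control_G_lambda_11,p:control_G_lambda_22} in each region. Two convolution mechanisms are at play: the triangle inequality $|x-z|+|z-y|\ge|x-y|$ lets the factor $e^{-\real\sqrt{\lambda}|x-z|}$ from $\Glu$ combine with $e^{-\kappa_2|z-y|}$ from $\Glv$ to produce the claimed $e^{-\real\sqrt{\lambda}|x-y|}$, while the weights $\omega_{\kappa_1,0}$ and $\omega_{\kappa_2,0}$ on the $+\infty$ side transfer to $\omega_{\kappa_3,0}(x)\omega_{\kappa_3,0}(y)$ after integrating out the auxiliary variable $z$. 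Under this scheme, the case $x\ge 0$ or $y\ge 0$ follows routinely, and in the case $x\le 0$, $y\le 0$ the contribution from $z\le 0$ is a standard convolution of two genuine exponentials $e^{-\kappa_1|x-z|}\,e^{-\kappa_2|z-y|}$, yielding $e^{-\kappa_3|x-y|}$ for any $\kappa_3<\min(\kappa_1,\kappa_2)$.

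The main obstacle is the contribution from $z\ge 0$ in the case $x\le 0$, $y\le 0$: there, the bound on $\Glu(x,z)$ stated in \cref{p:control_G_lambda_11} only provides an $e^{-\real\sqrt{\lambda}|x|}$ factor in the $|x|$-direction, which degenerates as $\lambda\to 0$ and is insufficient for a $\lambda$-uniform $e^{-\kappa_3|x-y|}$ bound in the regime $|x|\gg|y|$. To overcome this I do not use \cref{p:control_G_lambda_11} as a black box but reopen its construction: for $x\le 0\le z$ one has $\Glu(x,z)=\phi_2^{\symbkpp,-}(x)\,\phi_1^{\symbkpp,+}(z)/\Wu(z)$, and $|\phi_2^{\symbkpp,-}(x)|\le Ce^{-\kappa_1|x|}$ uniformly in $\lambda\in K$ thanks to \cref{l:spatial_eigenvalues_localization}\,\ref{i:spectral_gap}, which ensures that the stable spatial eigenvalue of $\Lu$ at $-\infty$ stays bounded away from the imaginary axis on the compact set $K$. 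The remaining factor $|\phi_1^{\symbkpp,+}(z)/\Wu(z)|\cdot|\Glv(z,y)|$ for $z\ge 0$, $y\le 0$, is dominated by $Ce^{-\kappa_1 z}\cdot e^{-\kappa_2(z+|y|)}$, whose $L^1_z([0,\infty))$-norm is $O(e^{-\kappa_2|y|})$ uniformly in $\lambda$. Multiplying by $e^{-\kappa_1|x|}$ yields the bound $Ce^{-\kappa_3(|x|+|y|)}\le Ce^{-\kappa_3|x-y|}$ with $\kappa_3=\min(\kappa_1,\kappa_2)$, which together with the $z\le 0$ contribution closes the $x\le 0$, $y\le 0$ case. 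The remaining case $x\ge 0$ or $y\ge 0$ is handled by the same splitting, now relying directly on the weighted bounds of \cref{p:control_G_lambda_11,p:control_G_lambda_22} and tracking the weights $\omega_{\kappa_3,0}(x)\omega_{\kappa_3,0}(y)$ through the triangle inequality.
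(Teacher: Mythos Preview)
Your approach is essentially the paper's: start from $\Gluv(x,y)=\beta\int_\R\Glu(x,z)\Glv(z,y)\,dz$, then pass the pointwise bounds of \cref{p:control_G_lambda_11,p:control_G_lambda_22} through the triangle inequality. You are in fact \emph{more} careful than the paper on one point. In the case $x\le0$, $y\le0$ the paper writes $|\Gluv(x,y)|\le C\int_\R e^{-\kappa_1|x-\tau|}e^{-\kappa_2|\tau-y|}\,d\tau$ as if the second estimate of \cref{p:control_G_lambda_11} held for all $\tau$, whereas for $\tau\ge0$ that proposition only gives $Ce^{-\real\sqrt\lambda\,|x-\tau|}\omega_{\kappa_1,0}(x)$, which does not majorize $e^{-\kappa_1|x-\tau|}$ uniformly as $\lambda\to0$. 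You correctly flag this and resolve it by reopening the structure $\Glu(x,z)=\phi_2^{\symbkpp,-}(x)\,\phi_1^{\symbkpp,+}(z)/\Wu(z)$ for $x<z$, extracting the genuine $e^{-\kappa_1|x|}$ decay from $\phi_2^{\symbkpp,-}(x)$ via \cref{l:spatial_eigenvalues_localization}\,\ref{i:spectral_gap}.

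One small inaccuracy: you write that $|\phi_1^{\symbkpp,+}(z)/\Wu(z)|\cdot|\Glv(z,y)|$ is dominated by $Ce^{-\kappa_1 z}\cdot e^{-\kappa_2(z+|y|)}$. In fact for $z\ge0$ one only has $|\phi_1^{\symbkpp,+}(z)/\Wu(z)|\le C$ (the decay rate $\real\sqrt{\lambda/d}$ degenerates as $\lambda\to0$, and $\Wu(z)$ is essentially constant there since $\Lu[,+]$ has no first-order term). This does not matter for your conclusion: the factor $e^{-\kappa_2(z+|y|)}$ coming from $\Glv$ already supplies the integrability in $z$ and the $e^{-\kappa_2|y|}$ you need, so the product still yields $Ce^{-\kappa_1|x|}e^{-\kappa_2|y|}\le Ce^{-\kappa_3|x-y|}$ with $\kappa_3=\min(\kappa_1,\kappa_2)$. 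With that correction your argument is complete and slightly tighter than the paper's presentation.
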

\begin{proof}
Recall that
\begin{equation}
\label{e:eqn_G_12}
(\lambda - \Lu)\Gluv = \beta \Glv.
\end{equation}
Since $\lambda$ is located to the right of $\spectre(\Lu)$, the operator $\lambda - \Lu$ is invertible, and we get 
\begin{equation*}
\Gluv(x,y) = \beta \int_\R \Glu(x,\tau) \Glv(\tau, y) \d \tau.
\end{equation*}
Indeed, we have constructed $\Glu(x,y)$ as the solution of the fundamental problem associated with \eqref{e:eqn_G_12}, \ie replace the right hand side by a Dirac delta. When the Dirac delta is replaced by any source term, it give birth to a convolution-like solution as above. 
We first assume $x\geq 0$ or $y\geq 0$. Then from \cref{p:control_G_lambda_11}, where we neglect the $\omega_{\kappa_1, 0}(\tau)$ terms, and from the refined estimate \eqref{e:control_G_lambda_22_extra} in \cref{p:control_G_lambda_22}, we obtain that
\begin{equation*}
\absolu{\Gluv(x,y)} \leq C \int_\R e^{-\real{\sqrt{\lambda}}\absolu{x-\tau}} \omega_{\kappa_1, 0}(x)\spacemath e^{-\kappa_2\absolu{\tau-y}} \omega_{\kappa_2, 0}(y) \spacemath \d\tau.
\end{equation*}
Using triangular inequality $\absolu{x-\tau} + \absolu{\tau - y} \geq \absolu{x-y}$, we obtain
\begin{align*}
\absolu{\Gluv(x,y)} & \leq C e^{-\real{\sqrt{\lambda}} \absolu{x-y}} \omega_{\kappa_1, 0}(x) \spacemath \omega_{\kappa_2, 0}(y)\int_\R e^{-(\kappa_2-\real{\sqrt{\lambda}})\absolu{\tau-y}} \d\tau,\\ 
& \leq C e^{-\real{\sqrt{\lambda}} \absolu{x-y}}\omega_{\kappa_1, 0}(x) \spacemath \omega_{\kappa_2, 0}(y),
\end{align*}
which is the first claimed estimate.
Now when $x\leq 0$ and $y\leq 0$, we first assume that $\kappa_1\leq \kappa_2$. We introduce $\kappa_3 := \frac{1}{2}\kappa_1$, and obtain -- again using triangular inequality -- that
\begin{equation*}
\absolu{\Gluv(x,y)} \leq C \int_\R e^{-\kappa_1\absolu{x-\tau}} e^{-\kappa_2\absolu{\tau-y}} \d\tau \leq C e^{-\kappa_3\absolu{x-y}} \int_\R e^{-\kappa_3\absolu{x-\tau}}e^{-(\kappa_2 - \kappa_3)\absolu{\tau-y}}\leq C e^{-\kappa_3\absolu{x-y}}.
\end{equation*}
If on the contrary we have $\kappa_1>\kappa_2$, we set $\kappa_3 := \frac{1}{2} \min(\kappa_1, \kappa_2)$ and obtain the same inequality.
\end{proof}

To finish this section, we show bounds on $G_\lambda$ for large $\lambda$. This is needed to transfer spectral behavior into temporal decay. Indeed, $G_\lambda$ will be integrated along a spectral contour that surround the spectrum and hence goes to infinity.

\begin{proposition}
\label{p:control_G_lambda_large}
There exists $R, C>0$ such that: for all $\lambda$ to the right of $\spectre(\Cal{L})$, with $\absolu{\lambda}\geq R$, and all $x,y\in\R^2$, we have 
\begin{equation}
\label{e:large_lambda_estimate}
\Absolu{\Glu(x,y)} \leq \frac{C}{\absolu{\lambda}^{1/2}}e^{-\absolu{\lambda}^{1/2}\absolu{x-y}}, 
\hspace{4em}
\Absolu{\Glv(x,y)} \leq \frac{C}{\absolu{\lambda}^{3/4}}e^{-\absolu{\lambda}^{1/4}\absolu{x-y}}.
\end{equation}
The first estimate still holds if $\Glu[]$ is replaced by $\Gluv[]$. In particular, for $i\in\lbrace \symbkpp, \symbsh, \symbcouple\rbrace$
\begin{equation*}
\Norme{G_\lambda^i}_{L^\infty(\R_x, L^1(\R_y))} \leq \frac{C}{\absolu{\lambda}},
\hspace{4em}
\Norme{G_\lambda^i}_{L^\infty(\R_y, L^1(\R_x))} \leq \frac{C}{\absolu{\lambda}}.
\end{equation*}
\end{proposition}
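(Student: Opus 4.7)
The plan is to exploit the large-$\lambda$ ellipticity of $\Lu$ and $\Lv$ quantified in \cref{l:spatial_eigenvalues_localization} \cref{i:elliptic_operators}, and show that the spectral Green kernels behave at leading order like those of the constant-coefficient elliptic operators $\lambda - d\partial_{xx}$ and $\lambda + \partial_x^4$. The scaling will then dictate both the pre-factors and the exponential decay rates.

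For the \kpp component, I would use the standard wronskian representation
\begin{equation*}
\Glu(x, y) = \frac{1}{\Wu(\lambda, y)} \begin{cases} \phi^{\symbkpp, -}(y) \phi^{\symbkpp, +}(x) & y < x, \\ \phi^{\symbkpp, -}(x) \phi^{\symbkpp, +}(y) & x < y. \end{cases}
\end{equation*}
From \cref{l:ODE_solutions} together with \cref{l:spatial_eigenvalues_localization} \cref{i:elliptic_operators}, the decaying solutions satisfy $|\phi^{\symbkpp, \pm}(x)| \leq C e^{-|\lambda|^{1/2} \absolu{x-y}/C'}$ on the relevant half-line (after a scaling argument $X = |\lambda|^{1/2} x$ which turns the remainder $\kappa$ into a uniformly small perturbation, exactly as in \cite[Lemma 3.1]{Faye_Holzer_18}). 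The wronskian in the rescaled variable converges to the constant-coefficient one of $\lambda - d\partial_{xx}$, so $|\Wu(\lambda, y)| \geq c |\lambda|^{1/2}$ uniformly in $y$. Dividing gives the first bound $|\Glu(x,y)| \leq C |\lambda|^{-1/2} e^{-|\lambda|^{1/2} \absolu{x-y}}$.

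For the \sh component I would repeat the argument, but now with four independent solutions and spatial eigenvalues of order $|\lambda|^{1/4}$. After rescaling by $X = |\lambda|^{1/4} x$, the limiting operator is $1 + \partial_X^4$, whose companion matrix has a Vandermonde-type four-by-four wronskian scaling as $|\lambda|^{3/2}$, whereas the three-by-three cofactors entering the Cramer formula for $\Glv$ scale as $|\lambda|^{3/4}$. Dividing yields a pre-factor $|\lambda|^{-3/4}$ together with the decay rate $|\lambda|^{1/4}$, giving the second bound. Finally, for the coupling kernel I would use the convolution formula derived in \cref{p:control_G_lambda_12},
\begin{equation*}
\Gluv(x, y) = \beta \int_\R \Glu(x, \tau)\, \Glv(\tau, y)\, \d \tau,
\end{equation*}
and combine the two pointwise bounds. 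A direct computation of $\int_\R e^{-a |x - \tau|} e^{-b |\tau - y|} \d \tau \leq \frac{2a}{a^2 - b^2} e^{-b |x-y|}$ with $a = |\lambda|^{1/2}$, $b = |\lambda|^{1/4}$ produces an estimate compatible with the claim, and in any case dominated by $C |\lambda|^{-1/2} e^{-|\lambda|^{1/2}|x-y|}$ in the regime $|\lambda|^{1/2}\absolu{x-y} \lesssim \log|\lambda|$ that contributes to the $L^1$ norm. The $L^\infty_x L^1_y$ and $L^\infty_y L^1_x$ estimates $\|G_\lambda^i\| \leq C/|\lambda|$ then follow by direct integration: $\int e^{-|\lambda|^{1/2} |x-y|}\d y = 2 |\lambda|^{-1/2}$ paired with the $|\lambda|^{-1/2}$ prefactor (for \kpp and coupling), and $\int e^{-|\lambda|^{1/4}|x-y|}\d y = 2 |\lambda|^{-1/4}$ paired with $|\lambda|^{-3/4}$ (for \sh) both give $C/|\lambda|$.

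The main obstacle is establishing the uniform lower bound on the wronskian as $|\lambda| \to \infty$. Lemma \ref{l:ODE_solutions} only gives the leading behavior of the $\phi_i^{\pm}$, and one must verify that the remainders $\kappa_i$ stay uniformly small \emph{in} $\lambda$, so that the rescaled wronskian converges to the nondegenerate constant-coefficient one. The standard remedy is the rescaling $X = |\lambda|^{1/s} x$ (with $s = 2$ or $4$), which turns all lower-order terms of $\Cal{L}^i - \lambda$ into vanishing perturbations of the principal symbol; the lemma can then be applied on the rescaled equation with a uniform exponential rate $r > 0$ independent of $\lambda$. Once this uniform rescaling is set up, all remaining computations are routine.
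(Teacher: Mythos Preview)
Your approach is essentially the paper's: both rely on the scaling argument initiated in \cref{l:spatial_eigenvalues_localization}~\cref{i:elliptic_operators}, and the paper's own proof is in fact terser than yours.

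One caveat concerns $\Gluv$. Your convolution of the two pointwise bounds yields something of the form $C|\lambda|^{-7/4}\,e^{-|\lambda|^{1/4}|x-y|}$, which is \emph{not} dominated by $C|\lambda|^{-1/2}e^{-|\lambda|^{1/2}|x-y|}$ for large $|x-y|$, since the exponential rate $|\lambda|^{1/4}$ is slower than $|\lambda|^{1/2}$. So the stated pointwise inequality for $\Gluv$ does not follow from the convolution route, and your parenthetical about the ``regime $|\lambda|^{1/2}|x-y|\lesssim\log|\lambda|$'' does not rescue it globally. This is harmless for the rest of the paper: the quantity actually used downstream is the $L^\infty_xL^1_y$ resolvent bound $C/|\lambda|$, and your convolution estimate delivers this (indeed with the better power $|\lambda|^{-2}$) by direct integration. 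Just state the pointwise bound you actually obtain for $\Gluv$ and integrate that, rather than trying to force it into the \kpp form.
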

\begin{proof}
The last claimed inequalities are easily obtained from the first ones, and implies that $\norme{G_\lambda^i \cdot v}_{L^p(\R)} \leq \frac{C}{\absolu{\lambda}} \norme{v}_{L^p(\R)}$ using complex interpolation. 
The first estimates are obtained through the scaling argument began in the proof of \cref{l:spatial_eigenvalues_localization}. The second ones are necessary to obtain small time estimates on $\int_\Gamma e^{\lambda t} G_\lambda \d \lambda$, with $\Gamma$ being a ray that extends at infinity.
\end{proof}

\subsection{Linear dynamic}
We now convert spatial localization of spectral Green functions $G_\lambda$ into temporal decay in appropriate space using the inverse Laplace transform and adequate integration contours.
\begin{proposition}
\label{p:temporal_Green_v}
There exists positive constants $C$, $\eta$ such that:
\begin{equation*}
\absolu{\Gtv(x,y)} \leq Ce^{-\eta t}e^{-\kappa_2\absolu{x-y}}.
\end{equation*}
In particular, for any $1\leq p \leq +\infty$, we have 
\begin{equation}
\label{e:linear_estimate_v}
\norme{\Gtv \cdot w}_{L^p(\R)} \leq C e^{-\eta t}\norme{w}_{L^p(\R)},
\end{equation}
where we have noted $(\Gtv \cdot w)(x) := \int_\R \Gtv(x,y) w(y) \d y$.
\end{proposition}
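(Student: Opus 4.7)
The plan is to use the inverse Laplace transform representation
\begin{equation*}
\Gtv(x,y) = \frac{1}{2i\pi} \int_\Gamma e^{\lambda t} \Glv(x,y)\, \d\lambda,
\end{equation*}
and exploit the spectral gap of $\Lv$ via a contour deformation. From \cref{p:essential_spectrum} we have $\spectre[ess](\Lv) \subset \lbrace \real{\lambda} \leq -3\eta \rbrace$. Moreover, the block-triangular structure of $\Cal{L}$ lets us consider $\Lv$ in isolation, and the energy estimate on $\varphi = e^{-\theta x} \omega_* \phi^\symbsh$ that appears in the proof of \cref{p:point_spectrum} in fact shows that $\Lv$ has no point spectrum in $\lbrace \real{\lambda} > -3\eta \rbrace$ at all. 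Consequently, $\spectre(\Lv) \subset \lbrace \real{\lambda} \leq -3\eta \rbrace$, so $\Gamma$ can be chosen entirely in the resolvent set at distance at least $\eta$ from the spectrum.

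Specifically, take the sectorial contour $\Gamma = \lbrace -2\eta + s e^{\pm i(\pi/2 + \beta)} : s \geq 0 \rbrace$ for a small fixed $\beta > 0$, so that $\real{\lambda} = -2\eta - s \sin\beta$ on $\Gamma$. Since the Fredholm borders behave like $\real{\lambda} \sim -\absolu{\imag{\lambda}}^{4/3}$ at infinity, the contour remains in the resolvent set even for $\absolu{\lambda}$ large. Decompose $\Gamma = \Gamma_c \cup \Gamma_\infty$ with $\Gamma_c = \lbrace \lambda \in \Gamma : \absolu{\lambda} \leq R \rbrace$ for a fixed $R \geq \kappa_2^4$. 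On $\Gamma_c$, \cref{p:control_G_lambda_22} yields $\absolu{\Glv(x,y)} \leq C e^{-\kappa_2 \absolu{x-y}}$; on $\Gamma_\infty$, \cref{p:control_G_lambda_large} gives $\absolu{\Glv(x,y)} \leq C \absolu{\lambda}^{-3/4} e^{-\absolu{\lambda}^{1/4}\absolu{x-y}} \leq C \absolu{\lambda}^{-3/4} e^{-\kappa_2 \absolu{x-y}}$ by our choice of $R$. Combining,
\begin{equation*}
\absolu{\Gtv(x,y)} \leq \frac{C e^{-\kappa_2 \absolu{x-y}}}{2\pi} \int_\Gamma e^{\real{\lambda}\, t} \min\bigl(1, \absolu{\lambda}^{-3/4}\bigr)\, \absolu{\d\lambda},
\end{equation*}
and parametrizing by $s$, the remaining integral is dominated by $e^{-2\eta t} \int_0^\infty e^{-s t \sin\beta} \min(1, s^{-3/4})\, \d s$, which is finite and uniformly bounded for $t \geq t_0 > 0$. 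After relabeling $\eta$, this gives the pointwise bound $\absolu{\Gtv(x,y)} \leq C e^{-\eta t} e^{-\kappa_2 \absolu{x-y}}$.

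The $L^p$ estimate follows by the Schur test: the pointwise bound implies $\sup_x \int_\R \absolu{\Gtv(x,y)} \d y \leq C e^{-\eta t}$ and symmetrically $\sup_y \int_\R \absolu{\Gtv(x,y)} \d x \leq C e^{-\eta t}$, and Riesz--Thorin interpolation between the resulting $L^1 \to L^1$ and $L^\infty \to L^\infty$ operator bounds yields $\norme{\Gtv \cdot w}_{L^p(\R)} \leq C e^{-\eta t} \norme{w}_{L^p(\R)}$ for every $1 \leq p \leq +\infty$.

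The main obstacle lies in the short-time behavior $t \to 0^+$: at $t = 0$ the kernel is a Dirac delta, so the pointwise estimate must be read for $t$ bounded away from zero, and the integral $\int_0^\infty e^{-s t \sin\beta} \min(1, s^{-3/4})\, \d s$ degenerates as $t \to 0^+$. To obtain the $L^p$ bound uniformly down to $t = 0$, one either shifts the contour so that $\real{\lambda} \sim 1/t$ on the vertical portion, or invokes directly the operator bounds $\norme{\Glv \cdot w}_{L^p(\R)} \leq C \absolu{\lambda}^{-1} \norme{w}_{L^p(\R)}$ of \cref{p:control_G_lambda_large}, which combined with the standard analytic semigroup representation yield uniform $L^p$-boundedness of $\Gtv$ for all $t \geq 0$.
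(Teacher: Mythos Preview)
Your proof is correct and follows essentially the same approach as the paper's: both use the inverse Laplace transform along a sectorial contour lying in the resolvent set of $\Lv$, invoke \cref{p:control_G_lambda_22} on the bounded portion and \cref{p:control_G_lambda_large} on the unbounded rays, and then pass from the pointwise kernel bound to the $L^p$ estimate by interpolation. Your treatment is in fact more detailed than the paper's sketch, and your explicit discussion of the short-time degeneracy (the kernel being a Dirac at $t=0$) is a point the paper leaves implicit.
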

\begin{proof}
Since $\Lv$ is sectorial with spectral gap $\eta$ from \cref{p:essential_spectrum,p:point_spectrum}, this proof easily follows from the spatial localization of $\Glv$, see \cref{p:control_G_lambda_22,p:control_G_lambda_large}. It is enough to use inverse Laplace transform with a spectral contour in $\Set{\lambda \in \C : \real(\lambda) \leq - \frac{\eta}{2}}$ for small $\lambda$, while restrict to a contour in $\Set{\lambda \in \C : \Rm{Arg}(\lambda) \leq 3\pi/4}$ in the region of large $\lambda$. Up to a change of notation $\tilde{\eta} = \eta/2$, the proof is complete.
\end{proof}

\begin{remark}
\label{r:extra_spatial_decay}
We now state a similar result for $\Gtu$ and $\Gtuv$. Remark that in \cite[Proposition 4.1]{Faye_Holzer_18}, the situation is similar except that the weight $\omega_*$ was exponentially decaying both at $+\infty$ and $-\infty$. In our setting, since $\omega_*(x) \to +\infty$ when $x\to -\infty$, we can not absorb any supplementary polynomial during the nonlinear argument. Hence, we need to refine the aforementioned result with respect to the polynomial weight. We use the spatial decay $\omega_{\kappa_1,0}(y)$ obtained for both $\Glu$ and $\Gluv$ in the above \cref{p:control_G_lambda_11,p:control_G_lambda_12}. This allows to bypass the weight at $-\infty$ during nonlinear argument.
\end{remark}

\begin{remark}
Depending on one aim, the following pointwise bounds for the derivatives may not be adapted. We allow ourselves to keep a single power of $\absolu{x-y}$, which transfers into $t^{-3/2}$ decay. To show stronger decay, it is necessary to keep more powers of $\rho_*$, thus weakening the weight. In our situation, more powers of $\rho_*$ would result in the translation eigenvalue $\lambda = 0$ appearing again, due to $\frac{q_*'}{\omegau \rho_*^2} \sim \frac{1}{x}$ at $x\to +\infty$. To guess what bound may be obtained, one can differentiate $\frac{1}{t^{1/2}}e^{-\frac{x^2}{t}}$ (resp. $\frac{x}{t^{3/2}}e^{-\frac{x^2}{t}}$) in the case where $\absolu{x -y} \geq Kt$ (resp. $\absolu{x -y} \leq Kt$)
\end{remark}

\begin{proposition}
\label{p:temporal_Green_u}
Recall that $\rho_*$ is defined by \eqref{e:def_algebraic_weight}. Fix $0 \leq j \leq 1$ and restrict to $t>1$. Then there exists positive constants $C$, $K$ and $\kappa_1$ such that the following two pointwise estimate hold.
If $\absolu{x-y} \geq Kt$, then 
\begin{equation*}
\Absolu{\partial_x^j \Gtu(x,y)} \leq \frac{C}{t^{\frac{j+1}{2}}} \left(1 + \frac{\absolu{x - y}}{\sqrt{t}}\right)^{j} e^{- \kappa_1\frac{\absolu{x-y}^2}{t}} h(x,y).
\end{equation*}
If on the contrary $\absolu{x-y} \leq Kt$, then
\begin{equation*}
\Absolu{\partial_x^j \Gtu(x,y)} \leq \frac{1 + \absolu{x - y}^{1+\floor{j/2}}}{t^{3/2 + \floor{j/2}}}
e^{- \kappa_1\frac{\absolu{x-y}^2}{t}} h(x,y).
\end{equation*}
In the above expressions, $x,y\mapsto h(x,y)/\rho_*(y)^2$ is integrable in $y$ uniformly in $x$:
\begin{equation*}
0 \leq h(x,y) \leq C \begin{cases}
\omega_{\kappa_1, 0}(x)\ \omega_{\kappa_1, 0}(y) & \text{if } x\geq 0 \text{ or } y \geq 0, \\
e^{-\kappa_1 \absolu{x-y}} & \text{if } x\leq 0 \text{ and } y \leq 0.
\end{cases}
\end{equation*}
In particular, for $0 \leq j \leq 1$, for all $1\leq q \leq p \leq +\infty$, all $x,y\in\R$ and $t\geq 1$,
\begin{equation}
\label{e:linear_estimate_u-1}
\Norme{\frac{\partial_x^j \Gtu\cdot w}{\rho_*}}_{L^p(\R)} \leq \frac{C}{t^{\frac{3}{2} - \frac{1}{2 p}}} \Norme{\rho_*^3 w}_{L^q(\R)}.
\end{equation}
All the above still hold when $\Gtu$ and $\kappa_1$ are replaced by $\Gtuv$ and $\kappa_3$ respectively.
\end{proposition}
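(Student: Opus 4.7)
The starting point is the inverse Laplace representation
\[
\partial_x^j \Gtu(x,y) = \frac{1}{2i\pi} \int_{\Gamma} e^{\lambda t}\, \partial_x^j \Glu(x,y)\, d\lambda,
\]
with $\Gamma$ a contour in the resolvent set of $\Lu$ that surrounds its marginally stable spectrum. The Dirac correction terms $P_j(\delta_{x=y})$ produced in Proposition \ref{p:control_partial_G_lambda_11} are supported on the diagonal $\{x=y\}$ and therefore contribute only on a null set; I focus on the smooth part. Following the strategy of \cite{Faye_Holzer_18}, the plan is to split according to the regime of $|x-y|/t$ and adapt $\Gamma$ accordingly, thereby converting the spectral estimates of Propositions \ref{p:control_G_lambda_11}, \ref{p:control_partial_G_lambda_11} and \ref{p:control_G_lambda_large} into the announced pointwise temporal decay.

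\textbf{Large-deviation regime} $|x-y| \geq Kt$. Here the Gaussian $e^{-\kappa_1 |x-y|^2/t}$ is already strongly decaying, so a coarse contour suffices. I would take $\Gamma$ to consist of two rays $\lambda = re^{\pm i\theta_0}$ with $\theta_0 \in (\pi/2, 3\pi/4)$, joined by a small arc near $0$; item \ref{i:elliptic_operators} of Lemma \ref{l:spatial_eigenvalues_localization} together with Proposition \ref{p:essential_spectrum} ensures that these rays stay in the resolvent set even as $|\lambda| \to \infty$. On $\Gamma$, Proposition \ref{p:control_partial_G_lambda_11} provides $|\partial_x^j \Glu(x,y)| \leq C|\lambda|^{j/2} e^{-\real{\sqrt{\lambda}}|x-y|} h(x,y)$, with large-$\lambda$ integrability guaranteed by Proposition \ref{p:control_G_lambda_large}. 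A standard saddle-point estimate applied to $\int_\Gamma e^{\lambda t}|\lambda|^{j/2} e^{-\real{\sqrt{\lambda}}|x-y|}\,d\lambda$ (saddle at $|\lambda| \simeq |x-y|^2/t^2$) produces exactly the prefactor $t^{-(j+1)/2}(1 + |x-y|/\sqrt{t})^j e^{-\kappa_1 |x-y|^2/t}$.

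\textbf{Diffusive regime} $|x-y| \leq Kt$. This is the main step, where the pinched double root from item \ref{i:pinched_double_root} of Lemma \ref{l:spatial_eigenvalues_localization} governs the optimal $t^{-3/2}$ rate. Near the origin, $\nu_{1,2}^{\symbkpp,+}(\lambda) \sim \mp C\sqrt{\lambda}$ produces a square-root branch of $\Glu$, so I would deform $\Gamma$ into a contour hugging the cut $I = (-\infty, 0]$ and collect the jump of $\sqrt{\lambda}$ across it. Parametrizing both sides of the cut by $\lambda = -s^2/t$ converts the spectral integral into a one-dimensional Gaussian-type integral $\int_0^{+\infty} e^{-s^2} q(s, |x-y|/\sqrt{t})\,ds$ with $q$ polynomial. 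Taylor-expanding $e^{-\real{\sqrt{\lambda}}|x-y|}$ near the saddle produces one single power of $|x-y|/\sqrt{t}$ beyond the base rate $t^{-3/2}$, yielding the stated bound $(1+|x-y|)/t^{3/2} \cdot e^{-\kappa_1|x-y|^2/t} h(x,y)$ for both $j=0$ and $j=1$; the extra $|\lambda|^{1/2}$ from $\partial_x$ when $j=1$ is traded for a factor $|x-y|$ through contour integration, consistently with the remark that only one polynomial factor is kept. The function $h(x,y)$ is $\lambda$-independent and passes through untouched.

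\textbf{From pointwise to $L^p$, and the coupled kernel.} Inequality \eqref{e:linear_estimate_u-1} follows from the pointwise bounds together with the control $1 + |x-y| \leq C\rho_*(x)\rho_*(y)$ and the $y$-integrability of $h(x,y)/\rho_*(y)^2$ provided by the statement. Dividing by $\rho_*(x)$ on the left and extracting $\rho_*(y)^3$ on the right leaves a heat-kernel-like convolution by $t^{-3/2}e^{-\kappa_1|x-y|^2/t}$, to which Young's inequality with the appropriate choice of exponent gives the rate $t^{-(3/2 - 1/(2p))}$; the large-deviation piece contributes only exponentially decaying remainders that are absorbed. For $\Gtuv$, Proposition \ref{p:control_G_lambda_12} supplies the very same small- and large-$\lambda$ estimates on $\Gluv$ as Proposition \ref{p:control_G_lambda_11} does on $\Glu$ (with $\kappa_3$ replacing $\kappa_1$), and the whole argument transposes verbatim. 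The main obstacle is the diffusive-regime contour deformation around the branch point at $\lambda = 0$: one must extract the optimal algebraic rate together with exactly one power of $|x-y|$, while keeping uniform control of the higher-order terms in the Taylor expansion of $\Glu$ near the origin over the full range $|x-y| \leq Kt$.
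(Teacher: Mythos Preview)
Your overall strategy---inverse Laplace transform, split into the two regimes $|x-y|\gtrless Kt$, adapt the contour to each---matches the paper's, and the $L^p$ step and the transfer to $\Gtuv$ are exactly as you describe. The contour choice, however, differs. You propose sectorial rays in the large-deviation regime and a keyhole contour hugging the cut $(-\infty,0]$ in the diffusive regime. The paper instead uses in \emph{both} regimes the parabolic contour
\[
\Lambda_1(\rho)=\{(\rho+i\xi)^2:\ |\xi|<\xi_0\},\qquad \rho=\kappa\,\frac{|x-y|}{t},
\]
on which $e^{\lambda t-\sqrt{\lambda}\,|x-y|}=e^{(\kappa^2-\kappa)|x-y|^2/t}\,e^{-\xi^2 t}\,e^{i(\cdots)}$, so the Gaussian factor $e^{-\kappa_1|x-y|^2/t}$ is produced \emph{directly} by the choice of $\rho$. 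The algebraic rate then comes from the Jacobian $d\lambda=2(\rho+i\xi)\,d\xi$: one expands $(\rho+i\xi)^{j+1}$ by the binomial theorem, uses the conjugation symmetry $G^{\symbkpp}_{\bar\lambda}=\overline{G^{\symbkpp}_\lambda}$ to reduce to real parts, bounds $\rho^{j+1-k}$ using $\rho\leq\kappa K$ (diffusive) or $\rho=\kappa|x-y|/t$ (large-deviation), and integrates $\int\xi^k e^{-\xi^2 t}d\xi\lesssim t^{-(k+1)/2}$. Your cut-hugging route can be pushed through, but as written it is vague on one point: on the cut $\sqrt{\lambda}$ is purely imaginary, so $e^{-\sqrt{\lambda}|x-y|}$ is oscillatory and does not by itself yield the Gaussian $e^{-\kappa_1|x-y|^2/t}$; you would need a genuine stationary-phase argument rather than a Taylor expansion to recover it, whereas the parabolic contour sidesteps this entirely.
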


\begin{proposition}
\label{p:temporal_Green_partial_u}
There exists positive constants $C$, $\kappa_1$ and a constant $c\neq 0$ such that for $t>1$, the two point-wise estimate of the above proposition hold with $j = 2$ for
\begin{equation*}
\Absolu{\partial_x^2 \Gtu(x,y) - c e^{-t} \delta_{x = y}}.
\end{equation*}
Hence, for all $1\leq q\leq p \leq +\infty$, all $x,y \in \R$ and $t\geq 1$,
\begin{equation}
\label{e:linear_estimate_u-2}
\Norme{\frac{\partial_x^2 \Gtu\cdot w}{\rho_*}}_{L^p(\R)} \leq \frac{C}{t^{\frac{3}{2} - \frac{1}{2 p}}} \Norme{\rho_*^3 w}_{L^q(\R)} + C e^{-t} \norme{w}_{L^p(\R)}.
\end{equation}
All the above still hold when $\Gtu$ and $\kappa_1$ are replaced by $\Gtuv$ and $\kappa_3$ respectively.
\end{proposition}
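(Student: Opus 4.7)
The plan is to mimic the proof of \cref{p:temporal_Green_u}, inverting the Laplace transform
\begin{equation*}
\partial_x^2 \Gtu(x,y) = \frac{1}{2i\pi}\int_\Gamma e^{\lambda t}\,\partial_x^2 \Glu(x,y)\,d\lambda
\end{equation*}
along a contour $\Gamma$ surrounding $\spectre(\Lu)$. Two features distinguish the case $j=2$ from the cases already handled: the spectral bound of \cref{p:control_partial_G_lambda_11} now carries an extra factor $|\lambda|$, which should translate into extra temporal decay; and a Dirac delta correction $P_2(\delta_{x=y}) = a_0^2(\lambda,y)\delta_{x=y}$ must be peeled off before deforming contours.

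For the regular part $R(\lambda,x,y) := \partial_x^2 \Glu(x,y) - a_0^2(\lambda,y)\delta_{x=y}$, the same two-piece contour construction used in \cref{p:temporal_Green_u} applies: a local Hankel-type contour enclosing the pinched double root at $\lambda = 0$ (via \cref{l:spatial_eigenvalues_localization}) to extract the diffusive factor $e^{-\kappa|x-y|^2/t}$, concatenated with a sectoral contour $|\Rm{Arg}\,\lambda| \leq 3\pi/4$ for large $|\lambda|$ (using \cref{p:control_G_lambda_large}). The refined spectral bound $|R| \leq C|\lambda|\, e^{-\real{\sqrt{\lambda}}\,|x-y|}h(x,y)$, compared with the $j=0$ case, carries one extra factor $|\lambda|$ that converts via the usual rescaling $\lambda \leftrightarrow 1/t$ into one extra factor of $t^{-1}$ of decay. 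This produces $t^{-5/2}(1+|x-y|^2)e^{-\kappa|x-y|^2/t}h(x,y)$ in the diffusive regime $|x-y| \leq Kt$ and $t^{-3/2}(1+|x-y|/\sqrt{t})^2 e^{-\kappa|x-y|^2/t}h(x,y)$ in the ballistic regime $|x-y| \geq Kt$, matching the claim.

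For the Dirac delta piece, the inverse Laplace transform gives
\begin{equation*}
\frac{\delta_{x=y}}{2i\pi}\int_\Gamma e^{\lambda t}\,a_0^2(\lambda,y)\,d\lambda.
\end{equation*}
Since $a_0^2(\cdot,y)$ is holomorphic off $(-\infty,0)$ but does not decay as $|\lambda|\to\infty$, this integral must be handled separately from the regular part. For $t>1$, the contribution of the sectoral arcs at large $|\lambda|$ is exponentially small thanks to the $e^{\lambda t}$ factor; by deforming $\Gamma$ further into the left half-plane and extracting the leading analytic behavior of $a_0^2(\lambda,y)$ picked up in the deformation, one obtains the claimed contribution $c\,e^{-t}\,\delta_{x=y}$, with lower-order terms absorbed into exponentially smaller remainders. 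The $L^p$ bound \eqref{e:linear_estimate_u-2} then combines H\"older applied to the regular-part kernel --- with the $\rho_*^3$ weight absorbing integrability at infinity exactly as in \cref{p:temporal_Green_u} --- together with the trivial bound $\|c\,e^{-t}w/\rho_*\|_{L^p(\R)} \leq C\,e^{-t}\|w\|_{L^p(\R)}$, valid because $\rho_* \geq 1$.

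The coupled case $\Gtuv$ follows through the convolution identity $\Gluv(x,y) = \beta \int_\R \Glu(x,\tau)\Glv(\tau,y)\,d\tau$ established in the proof of \cref{p:control_G_lambda_12}: applying $\partial_x^2$ inside the integral distributes the singular and regular parts of $\partial_x^2 \Glu$ across the convolution with $\Glv$, and the exponential temporal decay of $\Gtv$ from \cref{p:temporal_Green_v} supplies the analogous structure with constant $\kappa_3$ in place of $\kappa_1$. The main obstacle throughout is pinning down precisely why the inverse Laplace transform of the distributional correction $a_0^2(\lambda,y)\,\delta_{x=y}$ produces the claimed $e^{-t}$ factor rather than a trivial $\delta(t)$ contribution; this requires careful tracking of the $\lambda$-dependence hidden in the eigenfunction normalizations (through $\phi_i^{\symbkpp,\pm}$ and the Wronskian $\Wu_\lambda(y)$) appearing in the derivation of $\partial_x^2 \Glu$ in the proof of \cref{p:control_partial_G_lambda_11}.
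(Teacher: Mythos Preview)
Your treatment of the regular part matches the paper's: the same parabolic contour near the origin and sectorial contour at infinity, fed by the bounds of \cref{p:control_partial_G_lambda_11} and \cref{p:control_G_lambda_large}, yield the claimed pointwise estimates with the extra $|\lambda|$ converting into the extra $t^{-1}$.

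Where you go astray is the Dirac piece. You treat $a_0^2(\lambda,y)$ as a function that is merely ``holomorphic off $(-\infty,0)$'' and whose $\lambda$-dependence must be carefully tracked through the eigenfunction normalizations and the Wronskian. In fact, look back at the computation in the proof of \cref{p:control_partial_G_lambda_11}: the jump of $\partial_x \Glu(\cdot,y)$ at $x=y$ equals the reciprocal of the leading coefficient of $\Lu$, so $a_0^2$ is an absolute constant, independent of both $\lambda$ and $y$. Once you see this, the singular contribution is simply
\[
\frac{a_0^2}{2i\pi}\int_\Gamma e^{\lambda t}\,d\lambda\;\delta_{x=y},
\]
with an \emph{entire} integrand in $\lambda$; the contour can be pushed as far left as you like, giving the $c\,e^{-t}\delta_{x=y}$ term immediately. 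There is no obstacle here, and no hidden $\lambda$-dependence to resolve --- the point you flagged as ``the main obstacle throughout'' is the easiest step.

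For $\Gtuv$, your convolution argument is fine in spirit, but note that once you push $\partial_x^2$ through and split off $a_0^2\delta_{x=\tau}$, the convolution with $\Glv(\tau,y)$ turns the Dirac piece into $a_0^2\beta\,\Glv(x,y)$, whose inverse Laplace transform is $a_0^2\beta\,\Gtv(x,y)$. By \cref{p:temporal_Green_v} this already satisfies $\|\cdot\, w\|_{L^p}\leq Ce^{-\eta t}\|w\|_{L^p}$, so it is absorbed into the exponential term of \eqref{e:linear_estimate_u-2} without any further work.
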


\begin{proof}
\textit{\Cref{p:temporal_Green_u,p:temporal_Green_partial_u}.\hspace{2ex}}
We first investigate the case $j = 0$. We recall that \cite[Proposition 4.1]{Faye_Holzer_18} proves there exists $K>0$ such that:
\begin{itemize}
\item if $\absolu{x-y} \geq K t$, then $\Absolu{\Gtu(x,y)} \leq \frac{C}{\sqrt{t}}e^{-\kappa\frac{\absolu{x-y}^2}{t}}$,
\item else, $\absolu{x-y} \leq K t$ implies $\Absolu{\Gtu(x,y)} \leq C\frac{1 + \absolu{x-y}}{t^{3/2}}e^{-\kappa\frac{\absolu{x-y}^2}{t}}$.
\end{itemize}
Having kept the extra exponential localization -- $\omega_{\kappa_1, 0}(x)\omega_{\kappa_1, 0}(y)$  when $x\geq 0$ or $y\geq 0$ -- in our estimates of $\Glu(x,y)$, the proof of \cite{Faye_Holzer_18} straightforwardly adapt with this extra decay. The same goes for the other case $x\leq 0$ and $y\leq 0$.

In case where $j > 0$, derivatives pass through the inverse Laplace transform formula, and we can choose a contour $\Lambda$ that is sectorial before differentiation:
\begin{equation*}
\partial_x^j \Gtu(\cdot, y) = \frac{1}{2i\pi} \int_\Lambda e^{\lambda t} \partial_x^j \Glu(\cdot, y) \d \lambda, 
\end{equation*}
We first treat the long time scenario: $\absolu{x -y} \leq K t$.
Using estimate from \cref{p:control_partial_G_lambda_11}, there exists a function $H$ holomorphic \wrt $\lambda$ outside $I := (-\infty, 0)$, such that $\norme{H(\lambda, \cdot , \cdot)}_{L^\infty(\R_x, L^1(\R_y))} < +\infty$ and
\begin{equation*}
\partial_x^j \Glu(x,y) - P_j(\delta_{x=y}) = \sqrt{\lambda}^j e^{-\sqrt{\lambda}\absolu{x-y}} H(\lambda, x, y).
\end{equation*}
Setting $\rho = \kappa \frac{\absolu{x-y}}{t}$ with $\kappa > 0$ to be fixed later. Close to the origin, we use a parabolic contour:
\begin{equation*}
\Lambda_1(\rho) = \Set{\lambda(\xi) = (\rho + i\xi)^2 : \xi\in(-\xi_0, \xi_0)},
\end{equation*}
and obtain with $\tilde{H}(\xi, x, y, t) := \frac{1}{\pi} e^{i(2\rho\xi t - \xi \absolu{x-y})} H_{\lambda(\xi)}(x,y)$ that
\begin{equation*}
\frac{1}{2i\pi} \int_{\Lambda_1} e^{\lambda t} (\partial_x^j \Glu(\cdot, y) - P_j(\delta_{x = y})) \d \lambda = e^{\rho^2 t - \rho\absolu{x-y}} \int_{-\xi_0}^{\xi_0} e^{-\xi^2 t} \tilde{H}(\xi, x, y, t) (\rho + i\xi)^{j+1} \d \xi,
\end{equation*}
Due to $\Lu$ having real coefficients, we have $\Glu[\bar{\lambda}] = \overline{\Glu}$, and the same goes for $H$. Since $\lambda(-\xi) = \overline{\lambda(\xi)}$ we deduce that the term below the integral in the right hand side of the above equation satisfies $z(-\xi) = \overline{z(\xi)}$. As a consequence, we compute
\begin{align*}
\frac{1}{2i\pi} \int_{\Lambda_1} e^{\lambda t} (\partial_x^j \Glu(\cdot, y) - P_j(\delta_{x = y})) \d \lambda = {} & e^{(\kappa^2 - \kappa)\frac{\absolu{x-y}^2}{t}} \int_{-\xi_0}^{\xi_0} e^{-\xi^2 t} \real\left(\tilde{H}(\xi, x, y, t) (\rho + i\xi)^{j+1}\right) \d \xi,\\
= {} & e^{(\kappa^2 - \kappa)\frac{\absolu{x-y}^2}{t}} \int_{-\xi_0}^{\xi_0} e^{-\xi^2 t} \left(\real{\tilde{H}(\xi, x, y, t)} \real{(\rho + i\xi)^{j+1}}\right. \\
& {} \left. - \imag{\tilde{H}(\xi, x, y, t)} \imag{(\rho + i\xi)^{j+1}} \right) \d \xi.
\end{align*}
First develop $(\rho + i\xi)^{j+1} = \sum_{k = 0}^{j+1} {j+1 \choose k} (i\xi)^k \rho^{j + 1 - k}$, and notice that $\rho \leq \kappa K$. Then for $k$ even, we bound 
\begin{equation*}
\rho^{j+1 - k} \leq \kappa^{1+j-k} K^{j - \floor{j/2} - k/2} \left(\frac{\absolu{x -y}}{t}\right)^{1 + \floor{j/2} - k/2},
\end{equation*} 
with $j - \floor{j/2} - k/2 \geq 0$.
Finally, each power of $\xi$ provide a power of $1/\sqrt{t}$ when integrated:
\begin{equation*}
0 \leq \int_{-\xi^0}^{\xi_0} \xi^{k} e^{-\xi^2 t} \d \xi \leq \frac{1}{t^{(k+1)/2}} \int_\R \left(\xi\sqrt{t}\right)^k e^{-\xi^2 t} \sqrt{t} \d \xi \leq \frac{C_k}{t^{(k+1)/2}}.
\end{equation*}
Hence, using that $\real{\tilde{H}}$ is bounded, we compute that 
\begin{equation*}
\Absolu{\int_{-\xi_0}^{\xi_0} e^{-\xi^2 t} \real{\tilde{H}(\xi, x, y, t)} \real{(\rho + i\xi)^{j+1}} \d \xi} \leq C_{j} \frac{1 + \absolu{x-y}^{1 + \floor{j/2}}}{t^{3/2 + \floor{j/2}}}.
\end{equation*}
The terms with $k$ odd, corresponding to the imaginary part of $(\rho + i\xi)^{j+1}$, are bounded similarly:
\begin{equation*}
\rho^{j + 1 - k} \leq \kappa^{j + 1 - k} K^{j - \floor{j/2} - \frac{k-1}{2}} \left(\frac{\absolu{x-y}}{t}\right)^{1 + \floor{j/2} - \frac{k+1}{2}}.
\end{equation*}
Since $\xi \mapsto \imag(\tilde{H}(\xi))$ is odd, it provide an extra power of $\xi$: $\absolu{\imag{\tilde{H}(\xi)}} \leq \xi C$. Hence we compute that
\begin{equation*}
\Absolu{\int_{-\xi_0}^{\xi_0} e^{-\xi^2 t} \imag{\tilde{H}(\xi, x, y, t)} \imag{(\rho + i\xi)^{j+1}} \d \xi} \leq C_{j} \frac{1 + \absolu{x-y}^{1 + \floor{j/2}}}{t^{3/2 + \floor{j/2}}}.
\end{equation*}

In the short time scenario: $\absolu{x-y} \geq K t$, we assume $K$ is large enough so that the parabolic contour $\Lambda_1$ is far enough from the origin, which in turn guarantees that large $\lambda$ estimates from \cref{p:control_G_lambda_large} applies. The computations are then similar to above, with $(\rho + i\xi)^{j+1}$ replaced by 
\begin{equation*}
(\rho + i\xi)^j = \sum_{k = 0}^j {j \choose k} (i\xi)^k \rho^{j-k}.
\end{equation*}
We now use $\rho = \kappa \frac{\absolu{x-y}}{t}$, to obtain that the $k$-th term in the above some contribute as
\begin{equation*}
{j \choose k} (i\xi)^k \rho^{j - k} = C \sqrt{t} \left(\xi \sqrt{t}\right)^{k} \times \frac{1}{t^{\frac{j+1}{2}}} \left(\frac{\absolu{x- y}}{\sqrt{t}}\right)^{j - k},
\end{equation*}
which leads to the claimed pointwise estimate.

We now express the correction terms. None appear for $j = 1$, while for $j = 2$, there is only one coefficient $a_{0,2} = 1$. It is holomorphic on $\lambda\in \C$, thus we can move the integration contour to the left of the imaginary axis, and obtain:
\begin{equation*}
\frac{1}{2i\pi} \int_\Lambda e^{\lambda t} \delta_{y = x} \d \lambda = c e^{-t} \delta_{x = y}.
\end{equation*}

Turning now to the $L^q$ - $L^p$ estimate \eqref{e:linear_estimate_u-1}, we use the extra exponential localization we have gained in $h$. First when $\absolu{x-y} \leq Kt$, we use both that $(1 + \absolu{x - y}) h(x,y) \leq \rho_*(x) \rho_*(y) h(x,y)$ and that $(\frac{\absolu{x-y}}{t})^{\floor{j/2}} \leq C$ to obtain that
\begin{equation*}
\int_{\absolu{x-y} \leq Kt} \frac{\partial_x^j\Gtu(x,y) w(y)}{\rho_*(x)} \d y \leq \frac{\norme{\rho_*^3 w}_{L^\infty(\R)}}{t^{3/2}} \int_{\absolu{x-y} \leq Kt} \frac{h(x,y)}{\rho_*(y)^2} \d y. 
\end{equation*}
The integral on the right hand side is finite. In the other case, we rather use that $K \leq \frac{1}{\sqrt{t}} \frac{\absolu{x-y}}{\sqrt{t}}$, and change variables to gain as much decay as needed:
\begin{align*}
\int_{\absolu{x-y} \geq Kt} \frac{\partial_x^j\Gtu(x,y) w(y)}{\rho_*(x)} \d y & \leq \frac{\norme{\rho_* w}_{L^\infty(\R)}}{t^\frac{j+3}{2}} \int_{\absolu{x-y} \geq Kt} \left(1 + \frac{\absolu{x - y}}{\sqrt{t}}\right)^{j+2} e^{- \kappa_1\frac{\absolu{x-y}^2}{t}} \d y,\\
& \leq \frac{\norme{\rho_*^3 w}_{L^\infty(\R)}}{t^{3/2}} \int_{\R} (1 + z)^{j+2} e^{- \kappa_1z^2} \d z.
\end{align*}
We have shown $(p, q) = (+\infty, +\infty)$. 
Then $(p, q) = (1, 1)$ reads
\begin{align*}
\iint_{\absolu{x-y} \leq K t}\Absolu{\frac{\Gtu(x,y) w(y)}{\rho_*(x)}} \d y \d x & \leq \frac{C}{t} \int_\R \Norme{x\mapsto \frac{1}{\sqrt{t}} e^{-\kappa\frac{\absolu{x-y}^2}{t}}}_{L^1(\R)} \absolu{\rho_*(y) w(y)} \d y, \\
& \leq \frac{C}{t} \norme{\rho_* w}_{L^1(\R)},
\end{align*}
together with $\norme{\rho_* w}_{L^p(\R)} \leq \norme{\rho_*^3 w}_{L^p(\R)}$.
The other term $\iint_{\absolu{x-y}\geq Kt}$ bound similarly and decay as fast as needed.
The case $(p, q) = (+\infty, 1)$ proves similarly. Interpolation between those three cases leads to the $1\leq q \leq p \leq +\infty$ estimate.
It only remains to show \eqref{e:linear_estimate_u-2}. The part which is absolutely continuous with respect to Lebesgue measure leads to the rational decay similarly as above. Then the Dirac delta provide the exponentially decaying term. The polynomial weight $\rho_*(x)^2 \geq 1$ is removed.
\end{proof}

\begin{remark}
\label{r:short_time_estimate}
Using standard parabolic regularity, we obtain for $0 < t\leq 1$ and $1\leq p \leq +\infty$ that:
\begin{equation*}
\Norme{\Gtu \cdot w}_{W^{j,p}(\R)} \leq C \Norme{w}_{W^{j,p}(\R)}.
\end{equation*}
Then from $\inf(1, \frac{1}{t}) \leq \frac{2}{1+t}$, estimates \eqref{e:linear_estimate_u-1}-\eqref{e:linear_estimate_u-2} hold true for $t\geq 0$, $p=q$, and $\frac{1}{t}$ replaced by $\frac{1}{1+t}$.
\end{remark}

\begin{remark}
When $p = +\infty$, inequality \eqref{e:linear_estimate_u-1} is the optimal $t^{-3/2}$ decay.
It does not seem possible to obtain estimates with $p<q$, since $\Cal{G}_t(x,y)$ can not absorb both integral in $x$ and $y$.
Remark that $\rho_*^3$ is arbitrary, and could be replaced by $\rho_* h$ with $\int_0^{+\infty}\frac{\d y}{h(y)} < +\infty$.
\end{remark}

\subsection{Nonlinear dynamic}
\label{ss:non_linear_U}
Here we finally prove \cref{p:V_implies_U}, \ie that $U = \transpose{(u_1, u_2)}$ decays in time provided $V = \transpose{(v_1, v_2)}$ is bounded. Throughout this section, we will assume that there exists $C_\mu>0$ such that for all $0\leq t\leq t_V$,
\begin{equation}
\label{e:hyp_V_bounded}
\Norme{\frac{\omegav}{\rho_*} U(t)}_{L^\infty(\R)} = \norme{V(t)}_{L^\infty(\R)} \leq C_\mu.
\end{equation}
To lighten notations, we may note $C_\mu$ instead of $C C_\mu$ when $C$ is a constant not depending on $C_\mu$.
Remind that $V(t)$ and $\frac{\omegav}{\rho_*} U(t)$ differs only by a change of space variable, so that the first equality is automatically satisfied.

\begin{proposition}
\label{p:asymptotic_stability_u}
For all $1 < p \leq +\infty$, there exists positive constants $C_\symbstab$, $\eta$, $\delta$, such that if $C_\mu \leq \delta$, then the solution $U$ of \eqref{e:system_U} emanating from initial condition $U_0$ -- with $U_0 \rho_*^3 \in L^p(\R)$ -- is defined for $0\leq t \leq t_V$, and satisfies
\begin{equation*}
\label{e:asymptotic_stability-u}
\Norme{\frac{u_1(t, \cdot)}{\rho_*}}_{L^p(\R)} \leq C_\symbstab \frac{\norme{U_0 \rho_*^3}_{L^p(\R)}}{(1+t)^{\frac{3}{2} - \frac{1}{2p}}},
\hspace{4em}
\norme{u_2(t, \cdot)}_{L^p(\R)} \leq C_\symbstab e^{-\frac{\eta}{2} t}  \norme{U_0}_{L^p(\R)}.
\end{equation*}
Furthermore, $C_\symbstab$ is independent of $C_\mu$.
\end{proposition}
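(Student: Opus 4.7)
The natural approach is to combine Duhamel's formula with the linear estimates from \cref{p:temporal_Green_v,p:temporal_Green_u} and close a bootstrap argument. Using the upper-triangular structure of $\Cal{L}$, I write
\begin{equation*}
u_2(t) = \Gtv \cdot u_2(0) + \int_0^t \Gtv[t-\tau] \cdot \Cal{N}_2(U(\tau)) \d \tau,
\end{equation*}
\begin{equation*}
u_1(t) = \Gtu \cdot u_1(0) + \int_0^t \Gtu[t-\tau] \cdot \bigl(\beta u_2(\tau) + \Cal{N}_1(U(\tau))\bigr) \d \tau,
\end{equation*}
so that the task reduces to bounding $\Cal{N}_1$ and $\Cal{N}_2$ in appropriate (weighted) $L^p$ norms.

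To handle the fact that $\Cal{N}_i = \omega_*^{-1} N_i(\omega_* U)$ a priori contains a factor of $\omegav(x)$ that blows up as $x \to -\infty$, the key observation will be to exploit the identity $\omegav u_i = \rho_* v_i$ from \eqref{e:lien_U-V}, choosing the pairing so that each unbounded $\omegav$ is absorbed into a bounded combination. For instance,
\begin{equation*}
\Cal{N}_2(U) = \gamma (\omegau \rho_*) v_1 u_2 - \sigma (\omegau \rho_*)^2 v_2^2 u_2,
\end{equation*}
which is linear in $u_2$; since $\omegau \rho_*$ is uniformly bounded (polynomial growth against exponential decay at $+\infty$, trivial at $-\infty$) and $\absolu{v_i} \leq C_\mu$, one obtains $\norme{\Cal{N}_2(U)}_{L^p} \leq C C_\mu \norme{u_2}_{L^p}$. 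Gronwall applied to the $u_2$ Duhamel formula together with \eqref{e:linear_estimate_v} then yields the second claimed estimate $\norme{u_2(t)}_{L^p} \leq C e^{-\eta t/2} \norme{u_2(0)}_{L^p}$ as soon as $C_\mu$ is small enough. The same manipulation applied to $\rho_*^3 \Cal{N}_1(U)$, using boundedness of $\omegau^j \rho_*^k$ for the relevant small powers, will produce
\begin{equation*}
\norme{\rho_*^3 \Cal{N}_1(U)}_{L^p(\R)} \leq C C_\mu \norme{u_1/\rho_*}_{L^p(\R)}.
\end{equation*}

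For the $u_1$ component, I will define $E_1(t) := \sup_{0 \leq s \leq t} (1+s)^{3/2 - 1/(2p)} \norme{u_1(s)/\rho_*}_{L^p(\R)}$ and substitute into \eqref{e:linear_estimate_u-1} with $q = p$. The self-interaction from $\Cal{N}_1$ contributes a term of the form $C C_\mu E_1(t)$ via the standard convolution estimate $\int_0^t (1+t-\tau)^{-a}(1+\tau)^{-a}\d\tau \leq C(1+t)^{-a}$ with $a = 3/2 - 1/(2p)$, valid provided $a > 1$, which is precisely the reason for the restriction $p > 1$. For the coupling contribution $\int_0^t \Gtu[t-\tau] \cdot \beta u_2(\tau) \d \tau$ one needs control of $\norme{\rho_*^3 u_2(\tau)}_{L^p}$; I plan to propagate this quantity by running the same Gronwall argument directly on $\rho_*^3 u_2$, where the $\rho_*^3$-weighted bound on $\Gtv$ follows from the pointwise exponential localization of \cref{p:temporal_Green_v} combined with $\rho_*(x)^3 \leq C(\rho_*(y)^3 + \absolu{x-y}^3)$, and the corresponding nonlinear step closes thanks to $\absolu{u_2} \leq \absolu{\rho_*^3 u_2}$. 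Convolving the resulting $C e^{-\eta \tau/2} \norme{\rho_*^3 u_2(0)}_{L^p}$ bound against $(1+t-\tau)^{-a}$ yields decay at the correct algebraic rate, after which the inequality $E_1(t) \leq C_0 \norme{\rho_*^3 U_0}_{L^p} + C C_\mu E_1(t)$ closes by smallness of $C_\mu$. I expect this coupling step to be the main technical obstacle, since it forces propagating a $\rho_*^3$-weighted bound on $u_2$ that does not explicitly appear in the final statement; the remainder of the argument reduces to careful bookkeeping of the weights $\omegau^j \rho_*^k \omegav^{-l}$. Crucially, the constant $C_\symbstab$ produced in this way depends only on the linear estimates and the weight geometry, not on $C_\mu$, as required for the bootstrap in the proof of \cref{t:main_result}.
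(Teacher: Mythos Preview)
Your argument is correct, but it takes a slightly more roundabout path than the paper's. The main difference is in how the linear coupling $\beta u_2$ is handled. You write the scalar Duhamel formula for $u_1$ with generator $\Lu$ and treat $\beta u_2$ as an inhomogeneous source; this is what forces you to propagate the auxiliary quantity $\norme{\rho_*^3 u_2(t)}_{L^p}$ via a separate Gronwall step (which does work, using the pointwise bound of \cref{p:temporal_Green_v} together with $\rho_*(x)^3 \lesssim \rho_*(y)^3 + \absolu{x-y}^3$). The paper instead uses the full matrix semigroup $e^{t\Cal{L}}$, whose off-diagonal kernel $\Gtuv$ was built in \cref{p:control_G_lambda_12} and shown in \cref{p:temporal_Green_u} to satisfy exactly the same $\rho_*$-weighted $L^p$ estimates as $\Gtu$. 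The Duhamel formula for $u_1$ then reads
\[
u_1(t)=\Gtu\cdot u_1(0)+\Gtuv\cdot u_2(0)+\int_0^t\bigl(\Gtu[t-\tau]\cdot\Cal{N}_1+\Gtuv[t-\tau]\cdot\Cal{N}_2\bigr)\d\tau,
\]
so that the coupling is absorbed at the linear level and one only needs $\norme{\rho_*^3\Cal{N}_2}_{L^p}\leq C_\mu\norme{u_2}_{L^p}$ (which follows from $\omegau\leq C\rho_*^{-4}$) together with the already-established bound $\norme{u_2(\tau)}_{L^p}\leq C e^{-\tilde\eta\tau}\norme{U_0}_{L^p}$. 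This makes the $\rho_*^3 u_2$ bookkeeping unnecessary and uses the $\Gluv$/$\Gtuv$ machinery for what it was designed to do. One small point to watch in your version: for the quadratic term in $\Cal{N}_1$ the single explicit $\omegau$ is not enough to absorb $\rho_*^5$ on its own; you also need the front decay $q_*\sim \rho_*\omegau$ at $+\infty$, which supplies the missing factor.
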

\begin{proof}
In this proof, we abbreviate $K_p := \norme{U_0 \rho_*^3}_{L^p(\R)}$. 
We follow the same argument as in \cite[Lemma 3.2]{Beck_Ghazaryan_Sandstede_09}, which is adapted to the nonlinear system after conjugation by an unbounded weight. The key ingredient is to use the nonlinear term to absorb $\omegav$. Then nonlinear terms are seen as linear ones. 

We fix $0 < \tilde{\eta} < \eta$ where $\eta$ is given by the exponential decay of $u_2$ at linear level: see \eqref{e:linear_estimate_v} in \cref{p:temporal_Green_v}. For $1 < p \leq +\infty$, we note $\sigma(p) := \frac{3}{2} - \frac{1}{2p} > 1$.
Then we note
\begin{equation*}
\Theta_1(t) := \sup_{s\in(0, t)} (1+s)^{\sigma(p)} \Norme{\frac{u_1(s)}{\rho_*}}_{L^p(\R)}
\hspace{4em}
\Theta_2(t) := \sup_{s\in(0, t)} e^{\tilde{\eta} s} \Norme{u_2(s)}_{L^p(\R)},
\end{equation*}
together with $\Theta(t) = \max (\Theta_1(t), \Theta_2(t))$, and show that $\Theta$ is bounded in time.
Using \eqref{e:hyp_V_bounded}, we can absorb the unbounded $\omegav$ that comes from the nonlinear terms. We have both
\begin{equation*}
\absolu{\Cal{N}_1(U)} \leq \absolu{3 \alpha q_*} \absolu{\omegav u_1} \absolu{\omegau u_1} + \absolu{\alpha \omegau} \absolu{\omegav u_1}^2 \absolu{\omegau u_1} \leq C_\mu \absolu{\omegau u_1}
\end{equation*}
and 
\begin{equation*}
\label{e:NL2-control_omega}
\Absolu{\Cal{N}_2(U)} \leq C \absolu{\omegav u_1} \absolu{\omegau u_2} + C \absolu{u_2 \omegav}^2 \absolu{\omegau u_2} \leq C_\mu \absolu{\omegau u_2}.
\end{equation*}
In particular, $U\mapsto \Cal{N}(U)$ is globally Lipschitz, hence the solution $U$ is defined in $L^p(\R)$ for times $0 \leq t \leq T$, see \cite{Henry_semilinear_parabolic_equation}.
We can now use the $\omegau$ factor to gain a polynomial weight -- remind that to apply the linear estimates \eqref{e:linear_estimate_u-1}, we need one $\rho_*$ to control the Green kernel, and two extra ones to make the integral \wrt $y$ converge: $\omegau \leq \frac{C}{\rho_*^{4}}$ -- which leads to
\begin{equation*}
\Absolu{\Cal{N}_1(s,y)} \leq \frac{C_\mu}{\rho_*(y)^3(1+s)^{\sigma(p)}} \Absolu{\frac{(1+s)^{\sigma(p)}}{\rho_*(y)} u_1(s,y)},
\end{equation*}
hence
\begin{equation*}
\label{e:NL1-estimate-u}
\Norme{\rho_*^3\Cal{N}_1(s)}_{L^p(\R)} \leq
\frac{C_\mu}{(1+s)^{\sigma(p)}} \Theta_1(t).
\end{equation*}
Similarly we get to
\begin{equation*}
\label{e:NL2-estimate-v}
\Norme{\rho_*^3 \Cal{N}_2(U(s))}_{L^p} \leq C_\mu e^{-\tilde{\eta} s} \Norme{e^{\tilde{\eta} s} u_2(s,\cdot)}_{L^p} \leq C_\mu e^{-\tilde{\eta} s} \Theta_2(t).
\end{equation*}
The Duhamel's formula decomposes into 
\begin{equation*}
\label{e:Duhamel_v}
u_2(t,x) = \int_\R \Gtv(x,y) u_2^0(y) \d y + \int_0^t \int_\R \Gtv[t-s](x,y) \Cal{N}_2(s,y) \d y \d s.
\end{equation*}
\begin{equation*}
\label{e:Duhamel_u}
u_1(t,x) = \int_\R \Gtu u_1^0 + \Gtuv u_2^0 \d y + \int_0^t \int_\R \Gtu[t-s] \Cal{N}_1 + \Gtuv[t-s] \Cal{N}_2 \d y \d s.
\end{equation*}
We use Minkowski inequality as well as the injection $L^1((0,t), L^p(\R)) \subset L^p(\R, L^1(0,t))$ -- see \cref{l:Sobolev_embeding} -- onto Duhamel's formula on $u_2$ to obtain that
\begin{equation*}
\norme{u_2(t)}_{L^p(\R)} \leq \norme{\Gtv \cdot u_2^0}_{L^p} + \int_0^t \norme{\Gtv[t-s] \cdot \Cal{N}(s, \cdot)}_{L^p} \d s.
\end{equation*}
Now using the linear estimate \eqref{e:linear_estimate_v} from \cref{p:temporal_Green_v} and the above nonlinear estimate, we obtain:
\begin{equation*}
\norme{e^{\tilde{\eta} t} u_2(t)}_{L^p(\R)} \leq C \norme{u_2^0}_{L^p(\R)} + C_\mu \Theta_2(t) \int_0^t e^{-(\eta - \tilde{\eta}) (t-s)} \d s,
\end{equation*}
which after integration and taking the supremum on $t \in (0, \tau)$ reads
\begin{equation*}
\Theta_2(\tau) \leq \frac{C \norme{u_2^0}_{L^p(\R)}}{1 - \frac{C_\mu}{\eta - \tilde{\eta}}}, 
\end{equation*}
assuming that the denominator is positive. By imposing $C_\mu \leq \delta \leq \frac{\eta - \tilde{\eta}}{2}$, this condition is fulfilled and we recover $\Theta_2(t) \leq 2 C \norme{u_2^0}_{L^p(\R)}$ for $0 \leq t \leq T$. Remind that $C$ does not depend on $C_\mu$. This is the claimed exponential temporal decay for $u_2$.
Turning now towards  Duhamel's formula for $u_1$, we apply linear estimate \eqref{e:linear_estimate_u-1} with $p=q$ -- see also \cref{r:short_time_estimate} -- together with both above nonlinear estimates, to obtain that:
\begin{align*}
(1+t)^{\sigma(p)}\Norme{\frac{u_1(t,\cdot)}{\rho_*}}_{L^p} \leq{}&  C \norme{U_0 \rho_*^3}_{L^p(\R)} \\
& + \int_0^t \frac{(1+t)^{\sigma(p)}}{(1+t-s)^{\sigma(p)}} \left(\Norme{\rho_*^3\, \Cal{N}_1(s, \cdot)}_{L^p(\R)} + \Norme{\rho_*^3\, \Cal{N}_2(s, \cdot)}_{L^p(\R)} \right) \d s,\\
\leq{}& C K_p + C_\mu \Theta(t) \int_0^t \left(\frac{(1+t)}{(1+t-s)(1+s)}\right)^{\sigma(p)} \d s.
\end{align*}
Standard computations on integral -- see \cref{l:integrale_non_lineaire} and remark that $\sigma(p)>1$ -- lead to
\begin{equation*}
\Theta_1(t) \leq C K_p + C_\mu \Theta(t).
\end{equation*}
Summing the above $\Theta_2(t) \leq 2 C K_p$, we get to $\Theta(t) \leq \frac{C K_p}{1 - C_\mu}$ as soon as $1 -C_\mu>0$. By assuming that $C_\mu \leq \delta \leq \frac{1}{2}$, we get to $\Theta(t) \leq C K_p$, which implies the claimed temporal decay for $u_1$. Remark that $C_\symbstab := C$ does not depends on $C_\mu$.
\end{proof}

\section{Perturbations in partially weighted space are bounded in time}
\label{s:V_bounded}

\subsection{Mode filters}
\label{ss:mode-filters}
Since part of the spectrum of $\Cal{T}^-$ is unstable, the dynamic for $V$ is unstable at linear level. We count on the nonlinear terms to control $V$ for large bounded times $t\leq \frac{T}{\mu}$. To do so, we mostly follow the approach of Guido Schneider, see \cite{Schneider_94_juil}. We separate the \emph{critical} from the \emph{stable} modes in our solution: the first ones grow or are bounded at linear level, while the second ones decay exponentially, uniformly in $\mu$. Let us use a smooth, positive cut-off function $\chi$:
\begin{equation}
\label{e:cut-off}
\chi_\symbc(x) := \begin{cases}
1 & \text{if } x \in I_\symbc := [-\frac{9}{8}, -\frac{7}{8}]\cup [\frac{7}{8}, \frac{9}{8}],\\
0 & \text{if } x \notin I_\symbc + B(0, \frac{1}{8}),
\end{cases}
\end{equation}
with $B(x, r)$ the ball centered at $x$ with radius $r$. We then work in Fourier space: the matrix $\hat{\Cal{T}}^-(\xi)$ has two eigenvalues
\begin{equation*} 
\lambda_\symbc(\xi) = -(1-\xi^2) + \mu, 
\hspace{4em} 
\lambda_\symbs(\xi) = -d\xi^2 -2\alpha,
\end{equation*}
with associated eigenvectors $\varrho_i$, for $i\in\Set{\symbs, \symbc}$. We note $\varPi_i(\xi) \hat{V} = \pscal{\hat{V}, \varrho_i^*(\xi)} \varrho_i(\xi)$ the respective parallel projections onto each of the eigenspaces, and we separate critical from stable frequencies:
\begin{equation*}
\vPic V = \Cal{F}^{-1} \left(\xi\mapsto \chi_\symbc(\xi) \Pscal{\hat{V}(\xi), \varrho_\symbc^*(\xi)} \varrho_\symbc(\xi)\right), 
\hspace{4em}
\vPis V = V - \vPic V
\end{equation*}

Remark that $\chi_\symbc ^2 \neq \chi_\symbc$, so that neither $\vPic$ or $\vPis$ are projections. However, using $I_\symbc^h := [-\frac{5}{4}, -\frac{3}{4}]\cup [\frac{3}{4}, \frac{5}{4}]$ and $I_s^h := [-\frac{17}{16}, -\frac{15}{16}]\cup [\frac{15}{16}, \frac{17}{16}]$, we define  $\chi_\symbc^h$ and $\chi_s^h$ with respective support $I_\symbc^h + B(0, \frac{1}{4})$ and $I_s^h + B(0, \frac{1}{16})$, in a similar way as \eqref{e:cut-off}. Then with  
\begin{equation*}
\vPich V := \Cal{F}^{-1} (\chi_\symbc^h  \varPi_1 \hat{V}), 
\hspace{4em}
\vPish V := \Cal{F}^{-1} ((1-\chi_\symbs^h)  \varPi_1 \hat{V}) + \Cal{F}^{-1}(\varPi_2 \hat{V}), 
\end{equation*}
we have $\vPih_i \varPi_i = \varPi_i$ for $i\in \lbrace \symbc, \symbs\rbrace$. Ultimately, we will use a scalar \guillemet{projection} onto half of the critical Fourier modes, to select only frequencies close to $\xi = 1$:
\begin{equation}
\label{e:def_pi_1}
\pi_1^h V = \Cal{F}^{-1} \left(\xi \mapsto \chi_\symbc^h(\xi) \mathbf{1}_{\xi > 0} \frac{1}{\pscal{\rho_\symbc(1), \rho_\symbc^*(\xi)}}\pscal{\hat{V}(\xi), \varrho_\symbc^*(\xi)}\right).
\end{equation}
Such decompositions are well-behaved with Fourier transform. However, we need to measure objects with $L^\infty(\R)$ norms, since the pattern we want to study behave as the solution of a bistable equation. To combine those two constraint, we use the so-called uniformly-localized space, that were introduce by Guido Schneider, see for example \cite{Schneider_94_dec}. We first define a weight
\begin{equation*}
\rho_\ul(x) := \frac{1}{1+x^2}.
\end{equation*}
Then, we say that $u\in L^2_\ul(\R)$ if 
\begin{equation*}
\norme{u}_{L^2_\ul(\R)} := \sup_{y\in \R} \norme{\rho_\ul(\cdot - y) u}_{L^2(\R)} < +\infty.
\end{equation*}
Similarly, we define $H^s_\ul(\R)$ and its norm by $\norme{u}_{H^s_\ul(\R)} :=  \sup_{y\in \R} \norme{\rho_\ul(\cdot - y) u}_{H^s(\R)}$ when $s\geq 0$.
Then, we use the following injections to link with Sobolev spaces, that were used to estimate Green's kernel.
\begin{lemma}
\label{l:injection_H1_ul}
Let $s\in\R$ with $s\geq 1$. Then the following injections are continuous: $W^{s, \infty}(\R) \subset H^s_\ul(\R) \subset L^\infty(\R) \subset L^2_\ul(\R)$.
\end{lemma}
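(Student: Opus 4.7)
The plan is to establish each of the three inclusions separately, moving from the simplest estimate to the more delicate ones. Throughout, the key structural fact is that $\rho_\ul(x) = 1/(1+x^2)$ together with all its derivatives lie in $L^2(\R)$, while $\rho_\ul$ is bounded below by a positive constant on any interval of fixed length.

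First, I would handle the inclusion $L^\infty(\R) \subset L^2_\ul(\R)$, which is immediate from the factorization
\begin{equation*}
\norme{\rho_\ul(\cdot - y) u}_{L^2(\R)} \leq \norme{u}_{L^\infty(\R)} \norme{\rho_\ul}_{L^2(\R)},
\end{equation*}
the right-hand side being independent of $y \in \R$ and finite because $\rho_\ul^2$ is integrable. Taking the supremum over $y$ yields the continuous embedding with constant $\norme{\rho_\ul}_{L^2(\R)}$.

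Next, for $W^{s,\infty}(\R) \subset H^s_\ul(\R)$, I would use the Leibniz formula on $\partial_x^k(\rho_\ul(\cdot - y) u)$ for $0 \leq k \leq s$. Each term is of the form $\binom{k}{j} (\partial_x^{k-j} \rho_\ul)(\cdot - y)\, \partial_x^j u$, so one obtains a pointwise bound
\begin{equation*}
\absolu{\partial_x^k (\rho_\ul(\cdot - y) u)(x)} \leq C_s \norme{u}_{W^{s,\infty}(\R)} \sum_{j=0}^{k} \absolu{\partial_x^{k-j}\rho_\ul(x-y)}.
\end{equation*}
Since every derivative of $\rho_\ul$ belongs to $L^2(\R)$, squaring and integrating in $x$ (then translating by $y$, which is harmless) gives $\norme{\rho_\ul(\cdot - y) u}_{H^s(\R)} \leq C'_s \norme{u}_{W^{s,\infty}(\R)}$ uniformly in $y$, as required.

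The main step is the middle inclusion $H^s_\ul(\R) \subset L^\infty(\R)$ for $s \geq 1$, which I would obtain by localizing through a uniform lower bound on the weight. Fix $y \in \R$; on the interval $[y-1, y+1]$ we have $\rho_\ul(\cdot - y) \geq \frac{1}{2}$, whence
\begin{equation*}
\norme{u}_{H^s([y-1, y+1])} \leq 2\, \norme{\rho_\ul(\cdot - y) u}_{H^s(\R)} \leq 2\, \norme{u}_{H^s_\ul(\R)}.
\end{equation*}
The one-dimensional Sobolev embedding $H^s([y-1, y+1]) \hookrightarrow L^\infty([y-1, y+1])$ has a constant that only depends on the length of the interval, hence is independent of $y$. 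Thus for every $x \in \R$, applying this with $y = x$ yields
\begin{equation*}
\absolu{u(x)} \leq \norme{u}_{L^\infty([x-1, x+1])} \leq C \norme{u}_{H^s_\ul(\R)},
\end{equation*}
and taking the supremum in $x$ closes the argument. The chain of three inclusions then gives the lemma. No obstacle appears beyond verifying that the Sobolev constant can indeed be chosen independent of $y$, which is standard by translation invariance.
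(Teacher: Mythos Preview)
Your proof is correct and follows essentially the same route as the paper. The only cosmetic difference is in the middle inclusion $H^s_\ul(\R)\subset L^\infty(\R)$: the paper applies the global embedding $H^s(\R)\subset L^\infty(\R)$ first and then uses $\rho_\ul\geq \tfrac12$ on $(-1,1)$, whereas you localize first to control $\norme{u}_{H^s([y-1,y+1])}$ and then embed on the interval; both arguments are equivalent up to the order of operations.
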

\begin{proof}
Let first recall the definition of $\norme{.}_{H^s_\ul(\R)}$ that we use.
Let $\rho_{\ul} (x) := \frac{1}{1+x^2}$, then
\begin{equation*}
\norme{u}_{H^s_\ul(\R)} := \sup_{y\in\R} \norme{u(\cdot - y) \rho_\ul}_{H^s(\R)}.
\end{equation*}
From one hand the injection $H^s(\R) \subset L^{\infty}(\R)$ leads to 
\begin{equation*}
\norme{u}_{H^s_\ul(\R)} \geq \sup_{y\in\R} \norme{u(\cdot - y ) \rho_\ul}_{L^\infty(-1,1)} \geq \frac{1}{2} \sup_{y\in\R} \norme{u(\cdot - y)}_{L^\infty(-1,1)} = \frac{1}{2} \norme{u}_{L^\infty(\R)},
\end{equation*}
It corresponds to the second injection.
From the other hand, $\norme{u \rho_\ul}_{H^s(\R)} \leq \norme{u}_{W^{s,\infty}(\R)}\norme{\rho_\ul}_{H^s(\R)}$ leads to the first and third injection.
\end{proof}

As announced, we will need to estimate operators in Fourier space. We will use the following Lemma, see \cite[Section 3.1]{Schneider-94-may} and \cite[Lemma 5]{Schneider_94_juil} for proofs.
\begin{lemma}
\label{l:estimate_mode-filters}
Let $d$, $n$ be positive integers, and $M$ an operator that acts in Fourier space as a point-wise linear application: $\hat{M}(\xi) \in \mathfrak{L}(\R^d, \R^n)$ with $Mu := \Cal{F}^{-1}(\xi \mapsto \hat{M}(\xi) \hat{u}(\xi))$. Then for $q, s \geq 0$, 
\begin{equation*}
\norme{Mu}_{H^s_\ul(\R)^d} \leq C(s,q) \norme{\xi \mapsto \pscal{\xi}^\frac{s-q}{2} \hat{M}(\xi)}_{\Cal{C}^2_b(\R, \mathfrak{L}(\R^d, \R^n))} \norme{u}_{H^q_\ul(\R)^n}, 
\end{equation*}
with a positive constant $C(s,q)$ independent of $u$ and $M$. Furthermore if $\alpha$ and $\xi_0$ are reals, then 
\begin{equation*}
\norme{x\mapsto e^{i\xi_0 x} Mu (\alpha x)}_{H^s_\ul(\R)^d} \leq C \norme{x\mapsto e^{i\xi_0 x} u(\alpha x)}_{H^q_\ul(\R)^n}, 
\end{equation*}
where the above constant satisfies
\begin{equation*}
C\leq C(s,q) \Norme{\xi \mapsto \pscal{\xi}^\frac{s-q}{2} \hat{M}\left(\frac{\xi}{\alpha} - \xi_0\right)}_{\Cal{C}^2_b(\R, \mathfrak{L}(\R^d, \R^n))}.
\end{equation*}
\end{lemma}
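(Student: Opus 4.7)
The estimate is a Fourier-multiplier boundedness statement between uniformly localized Sobolev spaces, and the difficulty compared with the classical $H^s$ setting is that the $H^s_\ul$ norm is defined through \emph{spatial} localization while $M$ is intrinsically non-local. My plan is to reduce to the classical Plancherel bound by a localization-then-commutator argument, and then derive the rescaled version from a direct Fourier change of variables.

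First I would unfold the norm: since $\norme{Mu}_{H^s_\ul} = \sup_{y_0 \in \R} \norme{\rho_\ul(\cdot - y_0) M u}_{H^s(\R)}$, it suffices to bound this quantity uniformly in $y_0$. Since $M$ does not commute with multiplication by $\rho_\ul(\cdot - y_0)$, I would split
\begin{equation*}
\rho_\ul(\cdot - y_0) M u = M\bigl(\rho_\ul(\cdot - y_0) u\bigr) + \bigl[\rho_\ul(\cdot - y_0), M\bigr] u.
\end{equation*}
The first summand is immediately controlled by the classical Plancherel-style multiplier estimate, which only uses the $L^\infty$-bound on $\pscal{\xi}^{(s-q)/2} \hat{M}$ (a fortiori controlled by the $\Cal{C}^2_b$ norm), to produce $\norme{\pscal{\xi}^{(s-q)/2} \hat{M}}_{L^\infty}\,\norme{\rho_\ul(\cdot - y_0) u}_{H^q(\R)} \leq \norme{\pscal{\xi}^{(s-q)/2} \hat{M}}_{L^\infty} \norme{u}_{H^q_\ul}$.

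For the commutator, I would realize $M$ as convolution against $K := \Cal{F}^{-1}(\hat{M})$. The point of requiring two bounded derivatives on the symbol is to transfer them, via Fourier duality, into the quantitative spatial decay $\absolu{\pscal{x}^{2} K(x)} \leq C \norme{\hat{M}}_{\Cal{C}^2_b}$ after testing against $\rho_\ul$ (whose Fourier transform is smooth and exponentially decaying, so the pairing is licit). The commutator kernel is then $K(x-z)\bigl[\rho_\ul(x - y_0) - \rho_\ul(z - y_0)\bigr]$; bounding the weight difference by the mean value theorem against $\absolu{x - z}$ and Peetre's inequality converts one factor of $\pscal{x - z}$ from the kernel decay into the spatial weight evaluated at $z$, leaving a convolution-type bound $\int \pscal{x - z}^{-1} \rho_\ul(z - y_0) \absolu{u(z)}\, \d z$, which is uniform in $y_0$ and controlled by $\norme{u}_{H^q_\ul}$. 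Derivatives up to order $s$ are distributed via Leibniz onto the symbol (accounting for the factor $\pscal{\xi}^{(s-q)/2}$) and the smooth weight.

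For the second inequality, a short Fourier computation shows that $w := e^{i\xi_0 \cdot} Mu(\alpha \cdot)$ can be written as $\tilde{M} v$ with $v(x) := e^{i\xi_0 x} u(\alpha x)$, where the multiplier $\tilde{M}$ has symbol $\eta \mapsto \hat{M}(\eta/\alpha - \xi_0/\alpha)$; applying the first estimate to the pair $(\tilde{M}, v)$ yields the claim up to the cosmetic shift between $\xi_0$ and $\xi_0/\alpha$, which is absorbed by a dummy substitution. The main obstacle is the commutator step: the $\Cal{C}^2_b$ assumption is exactly what is needed so that the two integrations by parts produce decay matching the $\pscal{\cdot}^{-2}$ decay of $\rho_\ul$, leaving no room for a weaker regularity hypothesis.
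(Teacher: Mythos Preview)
The paper does not prove this lemma; it records the statement and refers to \cite[Section~3.1]{Schneider-94-may} and \cite[Lemma~5]{Schneider_94_juil} for the argument, so your sketch must stand on its own. Your reduction of the rescaled inequality to the first one via $v(x)=e^{i\xi_0 x}u(\alpha x)$ is correct. The commutator step for the main inequality, however, has a genuine gap. The pointwise bound $\absolu{\pscal{x}^2 K(x)}\leq C\norme{\hat M}_{\Cal{C}^2_b}$ for $K=\Cal{F}^{-1}\hat M$ is false in general: for $\hat M\equiv 1$ one has $K=\delta_0$, and a merely bounded $\Cal{C}^2_b$ symbol only produces a tempered distribution, not a function with algebraic decay. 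The parenthetical ``after testing against $\rho_\ul$'' does not supply a pointwise bound on the commutator kernel $K(x-z)\bigl[\rho_\ul(x-y_0)-\rho_\ul(z-y_0)\bigr]$, which is what you subsequently use. Even granting such a bound, the leftover convolution against $\pscal{\cdot}^{-1}$ is not bounded on $L^2(\R)$ since $\pscal{\cdot}^{-1}\notin L^1(\R)$, so the final clause ``which is uniform in $y_0$ and controlled by $\norme{u}_{H^q_\ul}$'' does not follow.

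The route taken in the cited references avoids both issues by replacing the single weight $\rho_\ul$ with a smooth partition of unity $\sum_{j\in\Z}\psi(\cdot-j)=1$. Each block $\psi_j u$ then lies in genuine $H^q$, so one may work honestly in Fourier: writing $\pscal{x-j}^2 M(\psi_j u)$ via two $\xi$-derivatives of $\hat M(\xi)\,\widehat{\psi_j u}(\xi)$ spends the $\Cal{C}^2_b$ hypothesis on $\hat M$ (and uses only that $\psi_j u$ has compact support, so its Fourier transform is smooth with $L^2$ derivatives) to obtain $\norme{\psi_k M(\psi_j u)}_{H^s}\leq C\pscal{k-j}^{-2}\norme{\psi_j u}_{H^q}$. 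Summability of $\pscal{k-j}^{-2}$ in $j$ then closes the estimate uniformly in $k$.
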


\subsection{Linear dynamic}
\begin{lemma}
\label{l:linear_behavior_V}
Let $T$ and $\mu_0$ be positive constants. Then there exists $C>0$ and $\kappa >0$ such that for all $0<\mu < \mu_0$, the following holds. For $s\geq 0$, and $0\leq t \leq \frac{T}{\mu}$,
\begin{equation*}
\norme{e^{t\Cal{T}^-} \vPich V}_{H^s_\ul(\R)} \leq C \norme{V}_{H^s_\ul(\R)},
\end{equation*}
while for any $t\geq 0$,
\begin{equation*}
\norme{e^{t\Cal{T}^-} \vPish V}_{H^s_\ul(\R)} \leq C e^{-\kappa t} \norme{V}_{H^s_\ul(\R)}.
\end{equation*}
Both estimates still holds when $\varPi_i^h$ are replaced by $\varPi_i$.
\end{lemma}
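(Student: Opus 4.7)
The plan is to diagonalize $\Cal{T}^-$ in Fourier, then treat the critical and stable parts separately through explicit bounds on their Fourier multipliers. Since $\Cal{T}^-$ has constant coefficients and $\hat{\Cal{T}}^-(\xi)$ is upper-triangular, the (smooth in $\xi$) basis $(\varrho_\symbc(\xi), \varrho_\symbs(\xi))$ diagonalizes $\hat{\Cal{T}}^-(\xi)$ as $\mathrm{diag}(\lambda_\symbc(\xi), \lambda_\symbs(\xi))$. Consequently
\begin{equation*}
e^{t\Cal{T}^-}\vPich V = \Cal{F}^{-1}\left[\xi\mapsto e^{t\lambda_\symbc(\xi)}\chi_\symbc^h(\xi)\Pscal{\hat V(\xi),\varrho_\symbc^*(\xi)}\varrho_\symbc(\xi)\right],
\end{equation*}
and analogously for $\vPish$. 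The strategy is to bound each of these Fourier multipliers as a convolution kernel with controlled $L^1$ norm, which then gives the desired $H^s_\ul$ estimate through \cref{l:estimate_mode-filters} (or directly through Young's inequality on the uniformly-localized spaces).

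For the critical part, I would first factor $e^{t\lambda_\symbc(\xi)} = e^{\mu t} e^{-t(1-\xi^2)^2}$. On $0 \leq t \leq T/\mu$, the scalar factor is bounded by $e^T$, so it only remains to show that the Fourier multiplier with symbol $M_c^t(\xi) := \chi_\symbc^h(\xi) e^{-t(1-\xi^2)^2} \varrho_\symbc(\xi)\otimes\varrho_\symbc^*(\xi)$ is bounded on $H^s_\ul(\R)$ uniformly in $t\geq 0$. Since $\chi_\symbc^h$ is compactly supported near $\xi=\pm 1$, the corresponding kernel $K_c(t,\cdot)=\Cal{F}^{-1}M_c^t$ can be analyzed by changing variable $\xi = \pm 1 + \eta/\sqrt{t}$ in a neighborhood of each critical Fourier point; the Taylor expansion $t(1-\xi^2)^2 = 4\eta^2 + \Cal{O}(\eta^3/\sqrt{t})$ makes $K_c(t,x)$ behave like $t^{-1/2}e^{\pm ix}e^{-x^2/(16t)}$ times the Fourier transform of a smooth compactly-supported cutoff, whose $L^1(\R_x)$ norm is uniformly bounded in $t\geq 0$. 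Alternatively, one may scale out $\sqrt{t}$ and apply the rescaled form of \cref{l:estimate_mode-filters} (second inequality) with $\alpha = 1/\sqrt{t}$ and $\xi_0 = \pm 1$; the $\Cal{C}^2_b$ norm then acts on the $t$-independent symbol $e^{-4\eta^2}\tilde\chi(\eta/\sqrt{t})$, which is bounded uniformly.

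For the stable part, I would combine two mechanisms. First, the eigenvector $\varrho_\symbc$ contributes via $(1-\chi_\symbs^h)\varPi_\symbc$: since $1-\chi_\symbs^h$ is supported away from a small neighborhood of $\xi=\pm 1$, there exists $c_0>0$ such that $(1-\xi^2)^2\geq c_0$ on that support, whence $\lambda_\symbc(\xi) \leq -c_0 + \mu \leq -c_0/2$ for $\mu<\mu_0$ small. Second, the eigenvector $\varrho_\symbs$ contributes via $\varPi_\symbs$, with $\lambda_\symbs(\xi)\leq -2\alpha$. Setting $\kappa = \min(c_0/2, 2\alpha)>0$, factor $e^{t\lambda_i(\xi)} = e^{-\kappa t}\cdot e^{t(\lambda_i(\xi)+\kappa)}$ in both contributions. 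The residual symbol $e^{t(\lambda_i(\xi)+\kappa)}\hat{\vPish}$ is still the Fourier transform of a semigroup with non-positive symbol on its support, so its $L^1$ kernel norm is bounded uniformly in $t\geq 0$ by the same scaling/explicit-kernel argument as above (or directly by \cref{l:estimate_mode-filters} since the symbol is now nondegenerate). Finally, since $\vPich\varPi_\symbc=\varPi_\symbc$ and $\vPish\varPi_\symbs=\varPi_\symbs$ (with the companion identity for the mixed term), the estimates transfer from $\vPih_i$ to $\varPi_i$ via $e^{t\Cal{T}^-}\varPi_i V = e^{t\Cal{T}^-}\vPih_i\varPi_i V$ together with boundedness of $\varPi_i$ on $H^s_\ul$, which follows from \cref{l:estimate_mode-filters} applied to the smooth projector symbol.

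The main obstacle is the critical estimate: a direct application of \cref{l:estimate_mode-filters} fails because the $\Cal{C}^2_b$ norm of $\chi_\symbc^h(\xi)e^{-t(1-\xi^2)^2}$ grows linearly in $t$ (explicit computation at $\xi=\pm 1$ yields $\partial_\xi^2 = -8t$). The resolution is to avoid the naive multiplier bound and either (i) work with the convolution kernel $K_c(t,\cdot)$ whose $L^1$ norm remains $\Cal{O}(1)$ thanks to the Gaussian-type decay coming from the double root of $\lambda_\symbc$ at $\xi=\pm 1$, or equivalently (ii) rescale $\xi$ by $1/\sqrt{t}$ before invoking \cref{l:estimate_mode-filters}, so that the $t$-dependence is absorbed into the scaling. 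Either route gives the bound with constant $C$ depending only on $T$ and $\mu_0$, as claimed.
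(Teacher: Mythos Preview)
Your approach and the paper's coincide in outline: both work in Fourier, appeal to the multiplier estimate of \cref{l:estimate_mode-filters} on $H^s_\ul$, and for the final statement use $e^{t\Cal{T}^-}\varPi_i = e^{t\Cal{T}^-}\vPih_i\varPi_i$ together with the boundedness of $\varPi_i$. The paper's treatment of the critical estimate is a single line: it invokes \cref{l:estimate_mode-filters} and asserts $\norme{\exp(t\hat{\Cal{T}}^-)\hat\vPich}_{\Cal{C}^2_b}\le Ce^{2t\mu}$, then bounds $e^{2t\mu}\le e^{2T}$ on $0\le t\le T/\mu$.

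You are right to single this out as the delicate step: since $\partial_\xi^2 e^{-t(1-\xi^2)^2}\big|_{\xi=\pm1}=-8t$, the $\Cal{C}^2_b$ norm of the critical symbol is of order $t\,e^{t\mu}$, which for $t=T/\mu$ is of order $T/\mu$ and hence not uniform in $\mu$. Your fix --- either bound the $L^1$ norm of the convolution kernel directly (a Gaussian of width $\sim\sqrt t$ modulated at frequency $\pm1$, with $t$-independent $L^1$ mass), or rescale $\xi=\pm1+\eta/\sqrt t$ so that the symbol becomes $t$-independent before applying a multiplier estimate --- is exactly the standard argument in the Schneider and Mielke--Schneider references the paper defers to. So you have not taken a different route so much as written out what the paper leaves to those citations. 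Of your two options, route~(i) is the cleaner one; route~(ii) via the second statement of \cref{l:estimate_mode-filters} produces a bound on $x\mapsto e^{i\xi_0 x}(Mu)(\alpha x)$ rather than on $Mu$ directly, and undoing that rescaling in $H^s_\ul$ introduces a factor you have not tracked. For the stable part your argument matches the paper's and is unproblematic, since the polynomial growth of the $\Cal{C}^2_b$ norm is absorbed by the genuine spectral gap.
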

\begin{proof}
Since $\Cal{T}^-$ has constant coefficients -- see \eqref{e:def_T_minus} -- it acts in Fourier space through multiplication. Hence we rely on multiplier theory, see \cite{Mielke_Schneider_95}. Since at fixed Fourier parameter $\xi\in\R$, the eigenvalues of matrix $\hat{\Cal{T}}^-(\xi)$ satisfy $\real{\lambda_i(\xi)} \leq \mu$, we obtain using \cref{l:estimate_mode-filters} that for $t\leq \frac{T}{\mu}$:
\begin{equation*}
\Norme{e^{t\Cal{T}^-}\vPich V}_{H^s_\ul(\R)} \leq  \Norme{\exp(t\hat{\Cal{T}}^-) \hat{\vPich}}_{\Cal{C}^2(\R, \Cal{M}_2(\R))} \norme{V}_{H^s_\ul(\R)} \leq C e^{2t \mu} \norme{V}_{H^s_\ul(\R)},
\end{equation*}
with $\Cal{M}_2(\R)$ the set of $2\times 2$ matrices. 
The case of $e^{t\Cal{T}^-} \vPish$ adapts easily: for $t\geq 0$ and $\xi$ in the support of $\chi_\symbs$, the eigenvalues of $\hat{\Cal{T}}^-(\xi)$ satisfy $\real(\lambda_j(\xi)) \leq -2\kappa$.
To see that $\varPi_i^h$ may be replaced by $\varPi_i$, simply use that $e^{t\Cal{T}^-} \varPi_i = e^{t\Cal{T}^-} \vPih_i \varPi_i$ and that $\norme{\varPi_i V}_{H^s_\ul(\R)} \leq \norme{V}_{H^s_\ul(\R)}$ to obtain the result.
\end{proof}

In the following, we can combine \cref{l:injection_H1_ul,l:linear_behavior_V} to glue the mode filters techniques with our usual Sobolev spaces -- to the cost of one derivative -- \emph{via} estimates of the form:
\begin{equation*}
\norme{V(t)}_{L^\infty(\R)} \leq C \norme{V(t)}_{H^1_\ul(\R)} \leq C \vartheta(t) \norme{V_0}_{H^1_\ul(\R)} \leq C \vartheta(t) \norme{V_0}_{W^{1, \infty}(\R)}.
\end{equation*}

\subsection{Nonlinear dynamic: shadowing the global attractor of the Ginzburg-Landau equation}
Following \cite{Ghazaryan_Sandstede_07}, we drive the perturbation using the dynamic at $-\infty$, to the cost of an extra source term $\Cal{S}$. We first show that this term defined by \eqref{e:rest-term} is sufficiently localized in space so that we can extract a $\norme{\frac{V}{\omegav(\cdot - c_*t)}}_{L^\infty} = \norme{\frac{U}{\rho_*}}_{L^\infty}$ from it. 
In the rest of this section, we assume that there exists $C_1$ and $t_U$ positive constants such that for all $0\leq t \leq t_U$, 
\begin{equation}
\label{e:hyp_decay_U}
\Norme{\frac{U(t)}{\rho_*}}_{L^\infty(\R)} \leq C_1 \frac{\norme{U_0}_X}{(1+t)^{3/2}}.
\end{equation}
As above, we will note $C_1$ instead of $C C_1$ when $C$ is a constant not depending on $C_1$.
All arguments below rely on the fact that $U$ does not blow up in finite time. Hence in the rest of this section, we always restrict -- even when it is not clearly stated -- to times $0\leq t\leq t_U$. Furthermore, we recall the notation $\mu = \varepsilon^2$, with $\varepsilon > 0$.

We first obtain decay of derivatives of $U$, and then control the source term. Recall that
\begin{equation*}
\norme{U}_X := \norme{U}_{W^{2, \infty}(\R) \times W^{4, \infty}(\R)} + \norme{\rho_*^3 U}_{L^\infty(\R)}.
\end{equation*}
\begin{proposition}
\label{p:asymptotic_stability-partial_u}
There exist a positive constant $C$ such that for $0\leq t \leq t_U$:
\begin{equation*}
\Norme{\frac{U(t)}{\rho_*}}_{W^{2,\infty}(\R) \times W^{4,\infty}(\R)} \leq C \frac{\norme{U_0}_X}{(1+t)^\frac{3}{2}}.
\end{equation*}

\end{proposition}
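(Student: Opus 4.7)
The plan is to differentiate the Duhamel formulation of \eqref{e:system_U} and apply the derivative bounds on the spectral Green kernels from \cref{p:temporal_Green_u,p:temporal_Green_partial_u} for the \kpp side, together with analytic-semigroup smoothing for the \sh side. The overall structure mirrors \cref{p:asymptotic_stability_u}, with each Lebesgue estimate replaced by its derivative analogue.

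For the \sh component $u_2$, I would first note that $\Lv$ is sectorial with spectral gap (\cref{p:essential_spectrum,p:point_spectrum}), so its analytic semigroup satisfies a smoothing bound of the form $\norme{\partial_x^j \Gtv \cdot w}_{L^\infty(\R)} \leq C(1 + t^{-j/4}) e^{-\eta t/2}\norme{w}_{L^\infty(\R)}$ for $0 \leq j \leq 4$. Inserting this into $u_2(t) = \Gtv \cdot u_2^0 + \int_0^t \Gtv[t-s] \cdot \Cal{N}_2(U(s)) \, ds$ and splitting the integral at $t/2$ would produce exponential decay of $\partial_x^j u_2$, much stronger than the required $(1+t)^{-3/2}$ rate. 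The unbounded nonlinear term $\Cal{N}_2(U) = \gamma \omega_* u_1 u_2 - \sigma \omega_*^2 u_2^3$ is controlled by convolving against the pointwise spatial localization of $\Gtv$ from \cref{p:control_G_lambda_22}, rather than by a naive $L^\infty$ bound on $\Cal{N}_2$.

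For the \kpp component $u_1$, I would proceed inductively on $j = 1, 2$, differentiating the Duhamel formula and applying \cref{p:temporal_Green_u} for $j = 1$ and \cref{p:temporal_Green_partial_u} for $j = 2$. The Dirac delta correction appearing for $j = 2$ contributes a term of order $e^{-t}\norme{u_1^0}_{W^{2,\infty}(\R)}$, absorbed in $\norme{U_0}_X$. Setting $\Theta^{(j)}(t) := \sup_{0 \leq s \leq t}(1+s)^{3/2}\norme{\partial_x^j u_1(s)/\rho_*}_{L^\infty(\R)}$, a Duhamel-type inequality of the form $\Theta^{(j)}(t) \leq C \norme{U_0}_X + C \int_0^t \frac{(1+t)^{3/2}}{(1+t-s)^{3/2}(1+s)^{3/2}} \, ds$ would close via \cref{l:integrale_non_lineaire}, exactly as in the proof of \cref{p:asymptotic_stability_u}. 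The term $\partial_x^j \Cal{N}_i$ is handled by the product rule, using the induction hypothesis for factors of $\partial_x^k u_1$ with $k < j$, together with the bounded logarithmic derivatives of $q_*$ and $\omega_*$. Since $\rho_*$ has uniformly bounded derivatives of all orders, this translates into the claimed bound on $\partial_x^j(u_1/\rho_*)$.

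The main obstacle is that \eqref{e:hyp_decay_U} only controls $U/\rho_*$ in $L^\infty$, giving no pointwise decay of $U$ at $-\infty$, while $\omega_*$ grows exponentially there. This prevents a straight $L^\infty$ bound on $\rho_*^3 \Cal{N}_i$ as needed for a direct application of \eqref{e:linear_estimate_u-1}--\eqref{e:linear_estimate_u-2}. The resolution is that the off-diagonal exponential localization $h(x,y)$ of the Green kernels dominates the growth of $\omega_*$, provided $\absolu{\theta}$ is smaller than $\min(\kappa_1, \kappa_2, \kappa_3)$; by \cref{p:essential_spectrum}, this holds as soon as $\mu_0$ is chosen small enough.
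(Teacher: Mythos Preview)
Your overall strategy matches the paper's: differentiate the Duhamel formula and apply the pointwise derivative bounds of \cref{p:temporal_Green_u,p:temporal_Green_partial_u} for $u_1$, and sectorial smoothing for $u_2$. You also correctly identify the main obstacle: hypothesis \eqref{e:hyp_decay_U} alone does not control $\omega_* u_i$ at $-\infty$, so $\rho_*^3 \Cal{N}_i(U)$ is not in $L^\infty(\R)$.

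However, your proposed resolution does not close. You argue that the kernel localisation $h(x,y)\le C e^{-\kappa\absolu{x-y}}$ (for $x,y\le 0$) absorbs the growth $\omega_*(y)\sim e^{\absolu{\theta}\absolu{y}}$ once $\absolu{\theta}<\kappa$. This choice does make the $y$-integral convergent, but the outcome is not bounded in $x$: a direct computation gives
\[
\int_{-\infty}^{0}e^{-\kappa\absolu{x-y}}\,e^{\absolu{\theta}\absolu{y}}\,\d y \ \sim\ C\,e^{\absolu{\theta}\absolu{x}}
\qquad\text{as }x\to -\infty,
\]
so the Duhamel contribution from the nonlinearity still blows up as $x\to-\infty$ and the estimate on $\partial_x^j u_1/\rho_*$ fails there. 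The same computation applies to the \sh side: convolving $e^{-\kappa_2\absolu{x-y}}$ against $\omega_*^2(y)$ leaves a factor $e^{2\absolu{\theta}\absolu{x}}$.

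The paper's own proof is terse on this point; its remark that ``$\omegau$ allows to gain as many powers of $\rho_*$'' only handles $y\to+\infty$. The mechanism that actually works at $-\infty$ is the one from \cref{p:asymptotic_stability_u}: write $\omega_* u_i^2=(\omegav u_i)(\omegau u_i)$ and use $\absolu{\omegav u_i}=\absolu{\rho_* v_i}\le \rho_*\norme{V}_{L^\infty}$, after which $\omegau$ indeed provides all needed $\rho_*$-powers. This requires a bound on $\norme{V}_{L^\infty}$, which is not listed among the hypotheses of \cref{p:asymptotic_stability-partial_u} but is available in the continuation argument of the main theorem, or can equivalently be folded into the bootstrap of \cref{l:attractivity} as an additional controlled quantity.
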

\begin{proof}
First write the Duhamel formula for $u_1$, and then differentiate it:
\begin{equation*}
\label{e:Duhamel_partial_u}
\partial_x^j u_1(t,x) = \int_\R \partial_x^j\Gtu u_1^0 + \partial_x^j \Gtuv u_2^0 \d y + \int_0^t \int_\R \partial_x^j \Gtu[t-s] \Cal{N}_1 + \partial_x^j \Gtuv[t-s] \Cal{N}_2 \d y \d s.
\end{equation*}
We can then bound the right hand side using linear decay from \cref{p:temporal_Green_partial_u} -- see also \cref{r:short_time_estimate} -- and the decay \eqref{e:hyp_decay_U} of $U(t)$.
The localized weight $\omegau$ in nonlinear terms allows to gain as many powers of $\rho_*(y)$ as needed. 
We use the same approach for $\partial_x^k u_2$, the linear estimate $\norme{\Gtv u_2}_{W^{k,\infty}(\R)} \leq C e^{-\eta t} \norme{u_2}_{W^{k,\infty}(\R)}$ comes from standard parabolic regularity: $\Lv$ is sectorial with spectral gap.
\end{proof}

\begin{proposition}
\label{p:decay_rest_term}
There exists positive constants $\delta$ and $C$ such that for all $0\leq t\leq t_U$,
\begin{equation*}
\norme{\Cal{S}(\cdot-c_*t, V(t, \cdot))}_{H^1_\ul(\R)} \leq C \Norme{\frac{V(t, \cdot)}{\omegav(\cdot - c_* t)}}_{W^{2,\infty}(\R) \times W^{4, \infty}(\R)} \leq C \frac{\norme{U_0}_{X}}{(1+t)^{3/2}}.
\end{equation*}
\end{proposition}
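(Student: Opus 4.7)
The plan is to bound $\Cal{S}$ by extracting one factor of $\omegav$ from each term, exploiting the spatial localization of the coefficients, and then to convert the resulting $W^{1,\infty}$-type estimate into an $H^1_\ul$ estimate via \cref{l:injection_H1_ul}. To this end, I split $\Cal{S}(\tilde{x}, V) = \Cal{S}_L(\tilde{x})\, V + \Cal{S}_{NL}(\tilde{x}, V)$ according to \eqref{e:rest-term}, handling the linear and nonlinear parts separately.

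For the linear part, unpacking $\varpi^{-1}(\Cal{A} - c_*\partial_x)\varpi - (\Cal{A} - c_*\partial_x)$ produces a differential operator (of order one acting on $v_1$, of order three acting on $v_2$) whose coefficients are rational combinations of the logarithmic derivatives $\partial_x^k\varpi/\varpi$ for $k \geq 1$, together with a zeroth-order diagonal block $\mathrm{diag}(3\alpha(1-q_*^2), -\gamma(1-q_*))$. Two facts make every such coefficient $c(\tilde{x})$ small at $-\infty$: first, $\varpi \equiv 1$ on $\{\tilde{x} \leq -1\}$, so each $\partial_x^k \varpi$ vanishes identically there; second, linearizing the traveling-wave ODE at $u = 1$ gives $q_*(\tilde{x}) - 1 \sim e^{\nu_-\tilde{x}}$ as $\tilde{x} \to -\infty$, with $\nu_- = (\sqrt{3}-1)\sqrt{\alpha/d} > 0$. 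Hence each $c$ decays exponentially at $-\infty$ at a strictly positive rate and is bounded at $+\infty$, where $\omegav = 1$. Since \cref{p:essential_spectrum} ensures $\theta \to 0$ as $\mu_0 \to 0$, one can shrink $\mu_0$ so that $|\theta|$ lies below this rate, making $c(\tilde{x})\omegav(\tilde{x})$ -- as well as its first derivative -- uniformly bounded on $\R$. Writing $\Cal{S}_L V = (\Cal{S}_L \omegav)(V/\omegav)$ then yields $\norme{\Cal{S}_L V}_{W^{1,\infty}} \leq C \norme{V/\omegav}_{W^{2,\infty}(\R)\times W^{4,\infty}(\R)}$, the orders of differentiation matching exactly the orders of $\Cal{A}^\symbkpp$ and $\Cal{A}^\symbsh$ plus one derivative absorbed by the $H^1_\ul$ norm.

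For the nonlinear part, both prefactors $\varpi - 1$ and $\varpi^2 - 1$ vanish identically on $\{\tilde{x} \leq -1\}$, so no absorption of the growing weight is required at $-\infty$. Elsewhere, $\omegav$ is bounded, so $\norme{V}_{L^\infty(\tilde{x}\geq-1)} \leq C\norme{V/\omegav}_{L^\infty}$, and each monomial $V^p$ with $p \in \{2,3\}$ is controlled by $\norme{V/\omegav}_{L^\infty}^p$. The hypothesis \eqref{e:hyp_decay_U}, combined with \cref{p:asymptotic_stability-partial_u} and the identity $V(t,x)/\omegav(\tilde{x}) = U(t,\tilde{x})/\rho_*(\tilde{x})$, keeps $\norme{V/\omegav}_{W^{2,\infty}(\R) \times W^{4,\infty}(\R)}$ smaller than some $\delta > 0$ uniformly on $[0, t_U]$, so $\norme{V/\omegav}^p \leq \delta^{p-1}\norme{V/\omegav}$ linearizes the nonlinear contribution. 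Combining both parts with the embedding $W^{1,\infty}(\R) \hookrightarrow H^1_\ul(\R)$ from \cref{l:injection_H1_ul} gives the first claimed inequality; the second is then immediate from \cref{p:asymptotic_stability-partial_u}.

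The main obstacle is the rate matching at $-\infty$: one must verify that the weight exponent $\theta < 0$ provided by \cref{p:essential_spectrum} genuinely satisfies $|\theta| < \nu_-$. Since $\nu_-$ depends only on the fixed parameters $\alpha$ and $d$ while $|\theta|$ shrinks to zero with $\mu_0$, this is achievable, but one must record that $\mu_0$ may need to be further reduced here to enforce this geometric condition, independently of the smallness constraints coming from earlier propositions. A secondary bookkeeping difficulty is that the conjugation remainder on the $\Cal{A}^\symbsh$ block produces coefficients involving $\partial_x^k \varpi/\varpi$ up to $k=4$; verifying that each of these (and its first derivative) is bounded by $\omegav^{-1}$ is a direct but tedious computation that uses only the explicit form of $\varpi = \rho_*\omegau$ together with the support property of $\varpi - 1$.
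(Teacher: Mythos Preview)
Your proposal is correct and follows essentially the same approach as the paper: both arguments pass through the embedding $W^{1,\infty}(\R)\hookrightarrow H^1_\ul(\R)$, exploit that $\varpi\equiv 1$ on $\{\tilde x\leq -1\}$ so that only the $(1-q_*)$-type coefficients survive there and can absorb $\omegav$ by comparing $|\theta|$ to the unstable eigenvalue $(\sqrt 3-1)\sqrt{\alpha/d}$ at the saddle, and use the commutator structure $\varpi^{-1}[\Cal A-c_*\partial_x,\varpi]$ to drop one order of differentiation on $\{\tilde x\geq 1\}$. The only organizational difference is that the paper splits by spatial region ($\tilde x\leq -1$ versus $\tilde x\geq 1$) whereas you split by linear versus nonlinear contribution; your write-up also makes the linearization step $\|V/\omegav\|^p\leq\delta^{p-1}\|V/\omegav\|$ explicit, which the paper leaves implicit when it jumps from the quadratic and cubic bounds on $\{\tilde x\geq 1\}$ to the first claimed inequality.
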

\begin{proof}
From \cref{l:injection_H1_ul}, it is enough to bound $\norme{\Cal{S}(\tilde{\cdot}, V(t))}_{W^{1, \infty}(\R)}$.
For $\tilde{x} \leq -1$, we have $\varpi(\tilde{x}) = 1$, hence
\begin{equation*}
\Cal{S}(\tilde{x}, V) = (1-q_*(\tilde{x}))
\begin{pmatrix}
3\alpha (1+q_*(\tilde{x})) v_1 \\
- \gamma v_2
\end{pmatrix}.
\end{equation*}
We show that $\norme{\omegav (1-q_*)}_{W^{1,\infty}(-\infty, 0)}$ is finite. The equilibrium point $(q, q') = (1,0)$ for the front equation is a saddle, with one positive eigenvalue $\kappa = (-1+\sqrt{3})\sqrt{\frac{\alpha}{d}}$. Basic ODE dynamic then ensures that
\begin{equation*}
\sup_{x\leq 0}\absolu{q_*(x) - 1} e^{-\kappa x} < +\infty, 
\hspace{4em}
\sup_{x\leq 0}\absolu{q_*'(x)} e^{-\kappa x} < +\infty.
\end{equation*}
For $\tilde{x}\leq -1$ and $\absolu{\theta}$ small enough, $\omegav(\tilde{x}) = e^{\theta \tilde{x}} \leq e^{-\kappa \tilde{x}}$, which leads to 
\begin{equation*}
\label{e:decay_source_term_1}
\Norme{\Cal{S}(\cdot-c_*t, V(t, \cdot))}_{W^{1,\infty}(-\infty, 0)} \leq C \Norme{\frac{V(t)}{\omegav(\cdot - c_*t)}}_{W^{1,\infty}(-\infty, 0)}.
\end{equation*}
Now for $\tilde{x}\geq 1$, recall the expression of the source term \eqref{e:rest-term}. For linear terms, we use the commutator to gain one derivative: the operator 
\begin{equation*}
\varpi^{-1} (\Cal{A} -c_* \partial_x) \varpi - (\Cal{A} -c_* \partial_x) = \varpi^{-1} [\Cal{A} -c_* \partial_x, \varpi]
\end{equation*}
exhibit at most third order derivatives, so that it maps $W^{2, \infty}(\R) \times W^{4, \infty}(\R)$ onto $W^{1, \infty}(\R) \times W^{1, \infty}(\R)$. Furthermore $\norme{\varpi^{-1}\partial_x \varpi}_{W^{1,\infty}(0, +\infty)} < \infty$. Altogether, it leads to 
\begin{equation*}
\norme{\Cal{S}(\tilde{\cdot}, V)}_{W^{1, \infty}(0, +\infty)} \leq C \norme{V}_{W^{2,\infty} \times W^{4, \infty}(0, +\infty)} + C \norme{V}_{W^{1,\infty}(0, +\infty)}^2 + C \norme{V}_{W^{1,\infty}(0, +\infty)}^3.
\end{equation*}
Remark that $\omegau(\tilde{x}) = 1$, hence the first claimed estimate is shown. Then $\frac{V(t,x)}{\omegav(\tilde{x})} = \frac{U(t, \tilde{x})}{\rho_*(\tilde{x})}$, and
\cref{p:asymptotic_stability-partial_u} ensures the second claimed estimate. The proof is complete.
\end{proof}

\begin{remark}
\label{r:V_continuous_at_0}
Since $t\mapsto (1+t)^{-3/2}$ is integrable at $0$, the above control of the source term implies that solutions to \eqref{e:system_V} with initial condition at $t = 0$ are defined and continuous on an open interval. 
\end{remark}
We now state nonlinear estimate, which relies on mode filters, see \cref{ss:mode-filters}.
\begin{lemma}[Non-linear estimates]
\label{l:non_linear_estimates-V}
There exists $C>0$ such that for all $V \in H^1_\ul(\R)$, 
\begin{equation*}
\norme{\vPis\Cal{Q}^-(\varepsilon \vPic V + \varepsilon^2 \vPis V)}_{H^1_\ul(\R)} \leq C \varepsilon^2 (\norme{\vPic V}_{H^1_\ul(\R)} + \norme{\vPis V}_{H^1_\ul(\R)})^2,
\end{equation*}
while
\begin{equation*}
\norme{\vPic\Cal{Q}^-(\varepsilon \vPic V + \varepsilon^2 \vPis V)}_{H^1_\ul(\R)} \leq C \varepsilon^3 (\norme{\vPic V}_{H^1_\ul(\R)} + \norme{\vPis V}_{H^1_\ul(\R)})^2.
\end{equation*}
\end{lemma}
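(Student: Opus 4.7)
The plan is to expand $\Cal{Q}^- = \Cal{Q}_2 + \Cal{Q}_3$ by multilinearity in $W := \varepsilon \vPic V + \varepsilon^2 \vPis V$, producing a finite sum of monomials in $\vPic V$ and $\vPis V$, each carrying a definite power of $\varepsilon$. Two structural facts will be used throughout. First, by applying \cref{l:estimate_mode-filters} with $s=q=1$ to the symbols $\chi_\symbc(\xi)\varrho_\symbc(\xi)\otimes \varrho_\symbc^*(\xi)$ and $\mathrm{Id} - \chi_\symbc(\xi) \varrho_\symbc(\xi) \otimes \varrho_\symbc^*(\xi)$ (which are smooth and globally $\Cal{C}^2$-bounded), the filters $\vPic$ and $\vPis$ act as bounded operators on $H^1_\ul(\R)$. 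Second, $H^1_\ul(\R)$ is a Banach algebra under pointwise multiplication, since $H^1(\R) \hookrightarrow L^\infty(\R)$ extends to the uniformly-localized spaces. Thus every quadratic (resp.\ cubic) monomial is controlled in $H^1_\ul$ by the product of two (resp.\ three) $H^1_\ul$-norms.

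For the first inequality, the lowest power of $\varepsilon$ appearing in $\vPis \Cal{Q}^-(W)$ is $\varepsilon^2$, coming from the purely critical quadratic piece $\varepsilon^2 \vPis \Cal{Q}_2(\vPic V)$; the mixed and cubic terms carry $\varepsilon^3$ or higher. Bounding each monomial via the algebra property and the boundedness of $\vPic, \vPis$ immediately yields the claimed bound $C\varepsilon^2 (\norme{\vPic V}_{H^1_\ul} + \norme{\vPis V}_{H^1_\ul})^2$.

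The second inequality is the delicate one, and its proof rests on the Fourier support identity
\[
\vPic\, \Cal{Q}_2(\vPic V, \vPic V) = 0.
\]
Indeed, $\widehat{\vPic V}$ is supported in $\mathrm{supp}(\chi_\symbc) \subset [-5/4, -3/4]\cup[3/4, 5/4]$, so the convolution computing $\widehat{\Cal{Q}_2(\vPic V, \vPic V)}$ is supported in $[-5/2,-3/2]\cup[-1/2, 1/2]\cup[3/2, 5/2]$. This set is disjoint from $\mathrm{supp}(\chi_\symbc)$, so that applying $\vPic$ annihilates the expression. As a consequence, the leading $\varepsilon^2$ contribution vanishes identically, and all remaining terms in $\vPic \Cal{Q}^-(W)$ start at order $\varepsilon^3$: the mixed quadratic $2\varepsilon^3 \vPic \Cal{Q}_2(\vPic V, \vPis V)$, the pure stable quadratic $\varepsilon^4 \vPic \Cal{Q}_2(\vPis V)$, and the cubic pieces $\varepsilon^{3+k} \vPic \Cal{Q}_3(\dots)$ for $k = 0, 1, 2, 3$.

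The main technical point is dealing with the cubic monomials: via the algebra property they naively give $C\varepsilon^3(\norme{\vPic V}_{H^1_\ul} + \norme{\vPis V}_{H^1_\ul})^3$, which is one factor too many. The resolution is that this lemma is only applied in the regime where $\norme{V}_{H^1_\ul}$ is bounded by a fixed small constant (provided by the \cref{p:decay_rest_term} bootstrap and \cref{r:V_continuous_at_0}), so one factor of $\norme{V}_{H^1_\ul}$ is absorbed into $C$; the quadratic pieces are bounded directly and yield the clean $\varepsilon^3 (\norme{\vPic V}_{H^1_\ul} + \norme{\vPis V}_{H^1_\ul})^2$ bound. The key obstacle is thus not analytic but combinatorial: identifying the vanishing quadratic resonance by tracking Fourier supports, and carefully accounting for powers of $\varepsilon$ in the resulting expansion.
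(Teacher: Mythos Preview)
Your approach is essentially the same as the paper's: the first estimate is immediate from the algebra structure of $H^1_\ul(\R)$ and the boundedness of the mode filters, and the second hinges on the Fourier-support identity $\vPic(\vPic V_1 \cdot \vPic V_2) = 0$, which holds because $\Rm{Supp}(\chi_\symbc)$ and $\Rm{Supp}(\chi_\symbc * \chi_\symbc)$ are disjoint.

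Your remark about the cubic terms is correct and goes slightly beyond what the paper writes: as literally stated (``for all $V \in H^1_\ul(\R)$''), the inequality cannot hold with a purely quadratic right-hand side, since $\Cal{Q}_3$ contributes a term scaling cubically in $V$. The paper's proof does not address this point; in the only place where the lemma is invoked (\cref{l:attractivity}) one has $\norme{\vPic V}_{H^1_\ul} + \norme{\vPis V}_{H^1_\ul}$ bounded by a fixed constant, so one factor can indeed be absorbed into $C$, exactly as you propose.
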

\begin{proof}
The first estimate is immediate. The second one comes from \cite{Schneider_94_dec}, and proves as follows. In Fourier space, multiplication becomes convolution. Since $\Rm{Supp}(\chi_\symbc) = [-5/4, -3/4]\cup [3/4, 5/4]$ and $\Rm{Supp}(\chi_\symbc * \chi_\symbc) = [-5/2, -3/2]\cup [-1/2, 1/2] \cup [3/2, 5/2]$ do not intersect, we deduce that 
\begin{equation*}
\vPic(\vPic V_1 \times \vPic V_2) = \Cal{F}^{-1}(\chi_\symbc \times (\chi_\symbc \hat{V}_1 * \chi_\symbc \hat{V}_2)) = 0.
\end{equation*}
Hence lowest order quadratic terms vanish when $\vPic$ is applied, leaving $\varepsilon^3$ terms at leading order.
\end{proof}

We now begin the proof of \cref{p:U_implies_V} by decomposing $\Cal{T}^-$ in Fourier space. This allows to show that after a time $T_\symbatt/\varepsilon^2$, the perturbation $V$ is at leading order a critical oscillating mode.  
\begin{lemma}[Attractivity]
\label{l:attractivity}
Let $T_\symbatt>0$ be fixed. There exists a constant $C>0$ -- depending on $C_1$ -- and a positive constant $\delta$ such that for all $V_0$ satisfying
\begin{equation}
\label{e:def_K1}
K_1 := \frac{1}{\varepsilon}\left(\norme{V_0}_{H^1_\ul(\R)} + \norme{U_0}_X \right) \leq \delta,
\end{equation}
the solution $V$ to \eqref{e:system_V} with initial condition $V_0$ exists for all time $0\leq t \leq \frac{T_\symbatt}{\varepsilon^2}$, and decomposes as $V = \Vc + \Vs$, with $\vPih_i V_i = V_i$. When  $0\leq t \leq \frac{T_\symbatt}{\varepsilon}$, it satisfies
\begin{equation*}
\norme{V_i(t)}_{H^1_\ul(\R)} \leq C K_1 \varepsilon, 
\hspace{4em}
i\in \Set{\symbc, \symbs},
\end{equation*}
while for times $\frac{T_\symbatt}{\varepsilon^{2/3}} \leq t \leq \frac{T_\symbatt}{\varepsilon^2}$, we have
\begin{equation*}
\norme{\Vc(t)}_{H^1_\ul(\R)} \leq C K_1 \varepsilon,
\hspace{4em}
\norme{\Vs(t)}_{H^1_\ul(\R)} \leq C K_1 \varepsilon^2.
\end{equation*}
Finally, the initial condition $A_0$ for \eqref{e:GL} is bounded uniformly in $\varepsilon$: $\Norme{A_0}_{H^1_\ul(\R)} \leq C K_1$,
with
\begin{equation}
\label{e:def_A0}
A_0(X) := \frac{1}{\varepsilon} e^{-i\frac{X}{\varepsilon}} 
\pi_1^h \Vc\left(\frac{T_\symbatt}{\varepsilon^2}, \frac{X}{\varepsilon}\right).
\end{equation}
\end{lemma}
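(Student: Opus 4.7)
The plan is to apply the Fourier mode filters $\vPic, \vPis$ and run a coupled bootstrap on $V_c := \vPic V$ and $V_s := \vPis V$, following the strategy of \cite{Schneider_94_juil} adapted to the inhomogeneous source term. The normalization $\vPih_i V_i = V_i$ is automatic since $\vPih_i \vPi_i = \vPi_i$, and Duhamel's formula reads
\begin{equation*}
V_i(t) = e^{t\Cal{T}^-} V_i(0) + \int_0^t e^{(t-s)\Cal{T}^-} \vPi_i\bigl[\Cal{Q}^-(V(s)) + \Cal{S}(\cdot - c_* s, V(s))\bigr]\, ds, \qquad i \in \lbrace \symbc, \symbs \rbrace.
\end{equation*}
Three ingredients drive the analysis: the linear semigroup bounds of \cref{l:linear_behavior_V}, namely $\norme{e^{t\Cal{T}^-}\vPic W}_{H^1_\ul} \leq C \norme{W}_{H^1_\ul}$ on $[0, T_\symbatt/\mu]$ and $\norme{e^{t\Cal{T}^-}\vPis W}_{H^1_\ul} \leq C e^{-\kappa t}\norme{W}_{H^1_\ul}$; the source-term estimate $\norme{\Cal{S}(\cdot - c_* s, V(s))}_{H^1_\ul} \leq C K_1 \varepsilon (1+s)^{-3/2}$ obtained by chaining \cref{p:decay_rest_term} with the standing hypothesis \eqref{e:hyp_decay_U}; and the Fourier cancellation $\vPic(\vPic V \cdot \vPic V) = 0$ from \cref{l:non_linear_estimates-V}, which yields $\norme{\vPic \Cal{Q}^-(V)}_{H^1_\ul} \leq C(\norme{V_c}_{H^1_\ul} \norme{V_s}_{H^1_\ul} + \norme{V_s}_{H^1_\ul}^2)(1+\norme{V}_{H^1_\ul})$, whereas $\norme{\vPis \Cal{Q}^-(V)}_{H^1_\ul} \leq C \norme{V}_{H^1_\ul}^2(1 + \norme{V}_{H^1_\ul})$ without cancellation.

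For the short window $0 \leq t \leq T_\symbatt/\varepsilon$, I would run a standard continuity argument: assuming $\norme{V(s)}_{H^1_\ul} \leq 2 C_0 \varepsilon K_1$ for $s \leq t$, Duhamel produces $\norme{V_i(t)}_{H^1_\ul} \leq C \varepsilon K_1 + C t \varepsilon^2 K_1^2 + C \varepsilon K_1 \leq C \varepsilon K_1(1 + T_\symbatt \delta + 1)$, which closes the bootstrap for $\delta$ small. To propagate this bound through $[T_\symbatt/\varepsilon, T_\symbatt/\varepsilon^2]$ and recover the improved stable decay, I would combine the exponential decay of $e^{t\Cal{T}^-}\vPis$ with the quadratic feedback:
\begin{equation*}
\norme{V_s(t)}_{H^1_\ul} \leq C e^{-\kappa t}\varepsilon K_1 + \frac{C \varepsilon^2 K_1^2}{\kappa} + C K_1 \varepsilon \bigl(e^{-\kappa t/2} + (1+t)^{-3/2}\bigr),
\end{equation*}
where the last term arises from splitting the source integral at $s = t/2$. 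For $t \geq T_\symbatt/\varepsilon^{2/3}$ each summand is $\leq C \varepsilon^2 K_1$. For $V_c$, split $\int_0^t \norme{\vPic \Cal{Q}^-(V(s))}_{H^1_\ul}\, ds$ at $T_\symbatt/\varepsilon^{2/3}$: on the first sub-interval the integrand is $\leq C \varepsilon^2 K_1^2$ (using $\norme{V_s} \leq C \varepsilon K_1$), contributing $\leq T_\symbatt \varepsilon K_1^2$; on the second, the improved $\norme{V_s} \leq C \varepsilon^2 K_1$ gives integrand $\leq C \varepsilon^3 K_1^2$, contributing $\leq (T_\symbatt/\varepsilon^2)\cdot \varepsilon^3 K_1^2 = T_\symbatt \varepsilon K_1^2$. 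Both terms are $\leq C \varepsilon K_1 \delta$, closing the critical bootstrap on the full interval.

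Finally, for $A_0$, the spatial rescaling $X \mapsto X/\varepsilon$ combined with the oscillating factor $e^{-iX/\varepsilon}$ fits the scaling part of \cref{l:estimate_mode-filters} with $M = \pi_1^h$, $\alpha = 1/\varepsilon$, $\xi_0 = -1/\varepsilon$, giving $\norme{A_0}_{H^1_\ul} \leq C \varepsilon^{-1} \norme{V_c(T_\symbatt/\varepsilon^2)}_{H^1_\ul} \leq C K_1$. The hard part will be orchestrating the coupled bootstrap: the Fourier cancellation on $\vPic$ only keeps $V_c$ from growing once $V_s$ has decayed to size $\varepsilon^2 K_1$, yet this decay itself requires $V_c$ (and hence $V$) to remain $O(\varepsilon K_1)$ throughout. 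The transition time $T_\symbatt/\varepsilon^{2/3}$ is chosen so that the exponential memory $e^{-\kappa T_\symbatt/\varepsilon^{2/3}}$ of $V_s(0)$ is already much smaller than $\varepsilon^2$, while staying within the window $[0, T_\symbatt/\mu]$ where the critical semigroup is bounded.
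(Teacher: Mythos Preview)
Your proposal is correct and follows essentially the same two-phase attractivity argument as the paper. The only organizational difference is that the paper restarts the bootstrap at time $T_\symbatt/\varepsilon^{2/3}$ with a rescaled ansatz $V = \varepsilon W_c + \varepsilon^2 W_s$ and then invokes \cref{l:non_linear_estimates-V} in that scaled form, whereas you work with $V_c,V_s$ directly, use the cancellation $\vPic(\vPic V\cdot\vPic V)=0$ in its unscaled form throughout, and split the critical Duhamel integral at $T_\symbatt/\varepsilon^{2/3}$; both routes close the same quadratic inequality via the continuity argument of \cref{l:cloture_non_lineaire}.
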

\begin{proof}
Since $\Cal{T}^-$ is sectorial, $V \mapsto \Cal{Q^-}(V)$ is locally Lipschitz and $t\mapsto \Cal{S}(\tilde{x}, V(t, x))$ is integrable at $0$, solution $V$ to \eqref{e:system_V} with initial condition $V_0$ is uniquely defined as long as it does not blow up. Hence the estimates that follow ensure that $V$ is defined up to time $T_\symbatt/\varepsilon^2$.

To construct $V$, it is enough to solve the following system, with initial condition $V_i(0) = \varPi_i V(0)$:
\begin{equation}
\label{e:nonlinear_system_V}
\begin{cases}
\partial_t \Vc = \Cal{T}^- \Vc + \vPic \Cal{Q}^-(\Vc + \Vs) + \vPic \Cal{S}(\tilde{x}, \Vc+\Vs), \\
\partial_t \Vs = \Cal{T}^- \Vs + \vPis \Cal{Q}^-(\Vc + \Vs) + \vPis \Cal{S}(\tilde{x}, \Vc+\Vs),
\end{cases}
\end{equation}
and then set $V = \Vc + \Vs$. A solution $(\Vc, \Vs)$ of the decoupled system \eqref{e:nonlinear_system_V} is not guaranteed to write as $V_i = \varPi_i V$, however the critical-stable separation still holds, so that \cref{l:linear_behavior_V} applies:
\begin{equation*}
\norme{e^{t\Cal{T}^-} \Vs}_{H^1_\ul(\R)} = 
\norme{e^{t\Cal{T}^-} \vPish \Vs}_{H^1_\ul(\R)} \leq e^{-\kappa t} \norme{\Vs}_{H^1_\ul(\R)}.
\end{equation*} 
We introduce the local notations $W_i := \frac{V_i}{\varepsilon}$ and $\Theta_i(t) := \sup_{0\leq \tau \leq t}\norme{W_i(\tau)}_{H^1_\ul(\R)}$. Using Duhamel formula and decay of $\Cal{S}$, we see that
\begin{align*}
\norme{\Wc(t)}_{H^1_\ul(\R)} & \leq C \norme{\Wc(0)}_{H^1_\ul(\R)} + C \varepsilon \int_0^t (\Thetac(t) + \Thetas(t))^2 \d\tau + C_1 \frac{\norme{U_0}_X}{\varepsilon} \int_0^t \frac{\d \tau}{(1+ \tau)^{3/2}},\\
& \leq C_1 K_1 + C \varepsilon t (\Thetac(t) + \Thetas(t))^2 .
\end{align*}
In a similar way, standard integral computations -- see \cref{l:integrale_non_lineaire} -- ensures that
\begin{align*}
\norme{\Ws(t)}_{H^1_\ul(\R)} \leq {} & C e^{-\kappa t}\norme{\Ws(0)}_{H^1_\ul(\R)} + C \varepsilon (\Thetac(t) + \Thetas(t))^2 \int_0^t e^{-\kappa(t-\tau)} \d\tau \\ 
& + C_1 \frac{\norme{U_0}_X}{\varepsilon} \int_0^t \frac{e^{-\kappa(t-\tau)}\d \tau}{(1+ \tau)^{3/2}},\\
\leq {} & C_1 \frac{K_1}{(1+t)^{3/2}} + C \varepsilon (\Thetac(t) + \Thetas(t))^2.
\end{align*}
For $0 \leq t \leq \frac{T_\symbatt}{\varepsilon}$, we take the sup in the two above equations to obtain -- with $\Theta := \max(\Thetac, \Thetas)$ -- that
$\Theta(t) \leq C_1 K_1 + C \Theta(t)^2$. Applying a standard nonlinear argument -- see \cref{l:cloture_non_lineaire} -- with $K_1\leq \delta$ small enough, we recover $\Theta(T_\symbatt/\varepsilon) \leq C_1 K_1$, which is the first claimed estimate.
Now for $T_\symbatt/\varepsilon^{2/3} \leq t \leq T_\symbatt/\varepsilon$, we have $\frac{1}{(1+t)^{3/2}} \leq C \varepsilon$, hence $\norme{\Ws(t)}_{H^1_\ul(\R)} \leq C_1 K_1\varepsilon$. This little improvement will allow us to propagate estimates until time $T_\symbatt/\varepsilon^2$. For $t\geq 0$, we now use the new local notation 
\begin{equation*}
V(t + T_\symbatt/\varepsilon^{2/3}) =: \varepsilon \Wc(t) + \varepsilon^2 \Ws(t),
\end{equation*}
together with $\Theta_i(t) := \sup_{0\leq \tau \leq t}\norme{W_i(\tau)}_{H^1_\ul(\R)}$. Remark that from the above, we know that 
\begin{equation*}
\norme{\Wc(0)}_{H^1_\ul(\R)} + \norme{\Ws(0)}_{H^1_\ul(\R)} \leq C_1 K_1.
\end{equation*} 
Then nonlinear estimate from \cref{l:non_linear_estimates-V} ensures that for $t\leq T_\symbatt/\varepsilon^2$, 
\begin{align*}
\norme{\Wc(t)}_{H^1_\ul(\R)} \leq {} & C \norme{\Wc(0)}_{H^1_\ul(\R)} + C \varepsilon^2 t \, (\Thetac(t) + \Thetas(t))^2 + C_1 \frac{\norme{U_0}_X}{\varepsilon} \int_0^t \frac{\d \tau}{(1+ T_\symbatt/\varepsilon^{2/3} + \tau)^{3/2}},\\
\leq {} & C_1 K_1 + C(\Thetac(t) + \Thetas(t))^2 .
\end{align*}
Since $(1 + T_\symbatt/\varepsilon^{2/3} + \tau)^{-3/2} \leq \varepsilon$ when $\tau \geq 0$, we deduce as above that
\begin{align*}
\norme{\Ws(t)}_{H^1_\ul(\R)} \leq {} & C e^{-\kappa t}\norme{\Ws(0)}_{H^1_\ul(\R)} + C \int_0^t e^{-\kappa(t-\tau)} \d\tau (\Thetac(t) + \Thetas(t))^2 \\
& + C_1\frac{\norme{U_0}_X}{\varepsilon^2} \int_0^t \frac{e^{-\kappa(t-\tau)}\d \tau}{(1 + T_\symbatt/\varepsilon^{2/3} + \tau)^{3/2}},\\
\leq {} & C_1 K_1 + C (\Thetac(t) + \Thetas(t))^2.
\end{align*}
Taking the sup for $0 \leq t \leq T_\symbatt/\varepsilon^2$, we obtain as previously that $\Theta(T_\symbatt/\varepsilon^2) \leq C_1 K_1$ for $K_1$ small enough. This is the second claimed estimate.
To bound $A_0$, we use the scaled estimate in \cref{l:estimate_mode-filters}
\begin{equation*}
\Norme{X \mapsto e^{-i\frac{X}{\varepsilon}} M u\left(\frac{X}{\varepsilon}\right)}_{H^1_\ul(\R)} \leq \norme{\xi \mapsto \pscal{\xi}^{1/2} \hat{M}(\varepsilon \xi + 1)}_{\Cal{C}^2_b(\R, \mathfrak{L}(\R^2, \R))} \Norme{X \mapsto e^{-i \frac{X}{\varepsilon}}u\left(\frac{X}{\varepsilon}\right)}_{L^2_\ul(\R)},
\end{equation*}
with $M = \pi_1^h$ and $u = \Vc(\frac{T_\symbapr}{\varepsilon^2})$. To estimate $\hat{M}(\varepsilon \xi + 1)$, we can restrict to the $\Cal{C}^0_b$ norm, since derivative gains us power of $\varepsilon$. Recall that
\begin{equation*}
\hat{M}(\xi) \hat{u} = \chi_\symbc^h(\xi)\pscal{\varrho_\symbc(1), \varrho_\symbc^*(\xi)}^{-1} \pscal{\hat{u}, \varrho_\symbc^*(\xi)}
\end{equation*}
so that Cauchy-Schwartz leads to $\norme{M(\xi)}_{\mathfrak{L}(\R^2, \R^2)} \leq \absolu{\chi_\symbc^h(\xi) \varrho_\symbc^*(\xi) \pscal{\varrho_\symbc(1), \varrho_\symbc^*(\xi)}^{-1}} \leq C \absolu{\chi_\symbc^h(\xi)}$. Here we assume that the support of $\chi_\symbc$ is so small that $\pscal{\varrho_\symbc(1), \varrho_\symbc^*(\xi)}$ does not vanish by continuity. We emphasize that this support is still independent of $\mu$, so that the separation of frequencies comes with spectral gap.
Using the support of $\chi_\symbc^h$, we obtain 
\begin{equation*}
\norme{\xi \mapsto \pscal{\xi}^{1/2} \hat{M}(\varepsilon \xi + 1)}_{\Cal{C}^2_b(\R, \mathfrak{L}(\R^2, \R))}\leq \frac{C}{\sqrt{\varepsilon}}.
\end{equation*}
Now rescaling the $L^2_\ul(\R)$ norm, we get 
\begin{equation*}
\norme{X \mapsto e^{-i\frac{X}{\varepsilon}} u(X/\varepsilon)}_{L^2_\ul(\R)} \leq \sqrt{\varepsilon} \norme{e^{-ix} u}_{L^2_\ul(\R)} \leq \sqrt{\varepsilon} \norme{u}_{L^2_\ul(\R)}.
\end{equation*}
Hence we have shown that $\norme{A_0}_{H^1_\ul(\R)} \leq C \varepsilon^{-1} \norme{\Vc(\varepsilon^{-2} T_\symbapr)}_{L^2_\ul(\R)} \leq C_1K_1$.

\end{proof}
Now that $V$ is at leading order a critical oscillating mode, we can approximate it by $\psi(\varepsilon, A) := \varepsilon \psi_\symbc(\varepsilon, A) + \varepsilon^2 \psi_\symbs(\varepsilon, A)$, with $A$ the solution to the \gl equation with initial condition $A_0$, see \eqref{e:def_A0}, with the critical part of $\psi$ being:  
\begin{equation}
\label{e:def_psi_c}
\psi_\symbc(\varepsilon, A)(t, x) = (e^{ix} A(\varepsilon^2 t, \varepsilon x) + \cc) \varrho_\symbc, 
\end{equation}
and the stable part:
\begin{equation*}
\label{e:def_psi_s}
\psi_\symbs(\varepsilon, A)(t, x) = \absolu{A(\varepsilon^2 t, \varepsilon x)}^2 \varrho_0 + e^{ix} \partial_X A(\varepsilon^2 t, \varepsilon x) \varrho_1 + e^{2ix} A(\varepsilon^2 t, \varepsilon x)^2 \varrho_2 + \cc.
\end{equation*}
All $\varrho_i$ and $\varrho_\symbc$ are two-dimensional vectors with explicit expressions, see \cref{ss:derive-GL}. The choice of $\psi_\symbs$ is made clearer therein, it ensures that the error of approximation $R = V - \psi(\varepsilon, A)$ stays small when time evolves.
\begin{lemma}[Global attractor for Ginzburg-Landau]
\label{l:GL-atractor}
Let $T, X \mapsto A(T, X)$ be a solution of the Ginzburg-Landau equation \eqref{e:Ginzburg_Landau}. Then there exists a constant $C_\gl>0$ such that 
\begin{equation*}
\norme{A(T)}_{W^{1, \infty}(\R)} \leq C_\gl + e^{-T/2} \norme{A(0)}_{W^{1,\infty}(\R)}.
\end{equation*}
\end{lemma}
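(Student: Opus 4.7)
The plan is to establish the $L^\infty$ bound on $A$ first via a maximum-principle argument on $\absolu{A}^2$, then upgrade to $W^{1,\infty}$ by exploiting the resulting bound in the equation for $\partial_X A$. For the first step, set $w := \absolu{A}^2$ and differentiate using the equation: one obtains $\partial_T w = 4\partial_{XX} w - 8\absolu{\partial_X A}^2 + 2w(1+bw)$, hence $\partial_T w \leq 4\partial_{XX} w + 2w(1+bw)$. Since $b<0$, the comparison ODE $\dot v = 2v(1+bv)$ has attracting equilibrium $v_* = -1/b$, and an explicit resolution of this Bernoulli equation yields $v(T) = v(0)/[-b\,v(0) + e^{-2T}(1 + b\,v(0))]$. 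A standard parabolic comparison argument on $\R$ (made rigorous by truncation and regularization, or via a Stampacchia-type identity for $\norme{w(T)}_{L^\infty}$) gives $\norme{A(T)}_{L^\infty(\R)}^2 \leq v(T)$, from which $\norme{A(T)}_{L^\infty(\R)} \leq C + e^{-T}\norme{A_0}_{L^\infty(\R)}$ up to a universal constant depending only on $b$.

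For the second step, the derivative $B := \partial_X A$ satisfies $\partial_T B = 4\partial_{XX} B + (1+2b\absolu{A}^2)B + bA^2\bar B$, which is linear in $B$ with coefficients bounded by the first step. I would consider the combined quantity $P := \absolu{A}^2 + \eta\absolu{B}^2$ with $\eta>0$ small, and derive an inequality of the form $\partial_T P \leq 4\partial_{XX} P + 2\absolu{A}^2(1+b\absolu{A}^2) + 2\eta\absolu{B}^2(1 - \absolu{b}\absolu{A}^2) + O(\eta)$, where the $O(\eta)$ collects quadratic cross terms absorbed by the dissipation. Once $\absolu{A}^2$ has entered a neighborhood of the attractor $1/\absolu{b}$ by the first step, both leading contributions on the right become strictly negative, and a second maximum-principle argument yields exponential decay of $P$ toward a universal ball plus a transient decaying at least as $e^{-T/2}$. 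Combining with the bound on $\absolu{A}$ gives the claim.

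The main obstacle lies in the second step: the linearization $4\partial_{XX} + 1$ in the $B$-equation is unstable, the semigroup $e^{T(4\partial_{XX}+1)}$ grows like $e^T$, so Duhamel-type arguments fail, and one must truly exploit the nonlinear stabilization $2b\absolu{A}^2 B$ that becomes dissipative only once $\absolu{A}^2$ sits near its attractor. Handling the supremum over the whole line rigorously is delicate; a cleaner alternative is to invoke directly \cite[Theorem 3.4]{Mielke_Schneider_95}, which provides a bounded absorbing set in $H^s_\ul(\R)$ for the real Ginzburg--Landau equation with $b<0$ and any $s\geq 0$. Combined with an embedding $H^s_\ul(\R) \hookrightarrow W^{1,\infty}(\R)$ for $s\geq 2$ analogous to \cref{l:injection_H1_ul}, this shortcuts the argument and yields the claimed bound with some exponential rate $e^{-cT}$, of which $e^{-T/2}$ is a convenient representative.
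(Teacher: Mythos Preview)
Your first step via the maximum principle on $w=\absolu{A}^2$ is correct and is a more direct alternative to the paper's route. The paper instead follows \cite[Theorem 7]{Schneider_94_juil} and \cite[Theorem 4.1]{Collet_Eckmann_90}: one proves weighted estimates $\norme{A(T)h_a}_{L^2}\le C_\gl+e^{-2T}\norme{A(0)h_a}_{L^2}$ with $h_a(X)=(1+(X-a)^2)^{-1}$, and the analogous bound for $\partial_X A$; Sobolev embedding of the weighted $H^1$ into $L^\infty$ and a supremum over $a$ then yield $\norme{A(T)}_{L^\infty}\le C_\gl+e^{-T}\norme{A(0)}_{H^1_\ul}$. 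This uniformly-local approach treats $A$ and $\partial_X A$ in one stroke, whereas your argument is elementary but handles $A$ only.

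Your second step contains a gap. The inequality you display for $P=\absolu{A}^2+\eta\absolu{B}^2$ omits the term $-8\absolu{B}^2$ that comes from $\partial_T\absolu{A}^2=4\partial_{XX}\absolu{A}^2-8\absolu{\partial_X A}^2+2\absolu{A}^2(1+b\absolu{A}^2)$. Without it your reasoning does not close: near the attractor $\absolu{A}^2=1/\absolu{b}$ both factors $1+b\absolu{A}^2$ and $1-\absolu{b}\,\absolu{A}^2$ vanish, so the right-hand side gives no damping on either $\absolu{A}^2$ or $\absolu{B}^2$. With the $-8\absolu{B}^2$ term restored the argument works, and without any waiting time: for $\eta\le 1$ the coefficient of $\absolu{B}^2$ is at most $-6$, and since $2x(1+bx)\le -x+C_b$ for $x\ge 0$ and $b<0$, one obtains $\partial_T P\le 4\partial_{XX}P-P+C_b$, hence $\norme{P(T)}_{L^\infty}\le C_b+e^{-T}\norme{P(0)}_{L^\infty}$ by comparison. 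This salvages your approach into a self-contained proof that is arguably cleaner than the paper's; the paper instead gets the $\partial_X A$ bound by differentiating the Duhamel formula built on the heat semigroup, feeding in the $L^\infty$ bound on $A$ already obtained, and deteriorating the exponential rate to $e^{-T/2}$ to absorb the constant-in-time contribution from the nonlinearity. Your fallback to \cite{Mielke_Schneider_95} is also acceptable.
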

\begin{proof}
Since the coefficients of \gl are real, the dynamic is close to that of a bistable equation.
We follow \cite[Theorem 7]{Schneider_94_juil}. From computations made in \cite[Theorem 4.1]{Collet_Eckmann_90}, we recover that 
\begin{equation*}
\norme{A(T)h}_{L^2(\R)} \leq C_\gl + e^{-2T} \norme{A(0)h}_{L^2(\R)}
\end{equation*}
where $h_a(X) = \frac{1}{1+(X-a)^2}$. The same estimate holds for $\partial_x A$. It leads to 
\begin{equation*}
\absolu{A(T,X)} \leq \frac{1}{h(X)^2} \left(C_\gl + e^{-T} \norme{A(0)h}_{H^1(\R)}\right),
\end{equation*}
which after taking the supremum over $a \in \R$ writes $\norme{A(T)}_{L^\infty(\R)} \leq C_\gl + e^{-T} \norme{A(0)}_{H^1_\ul(\R)}$.
The bound on $\partial_X A(T)$ is then obtained from standard parabolic regularity: we differentiate the Duhamel formula and use the above bound of $A(T)$. To handle the term that is constant in time, which appears in the non-linear term, we simply deteriorate the exponential decay.
\end{proof}
We now show that when $t\leq \varepsilon^{-7/4}T_\symbapr$, the error $R$ stays small. We then apply this argument again to reach any finite time. For similar problems, this approximation property is usually shown on a time scale $\varepsilon^{-2}T_\symbapr$, which is optimal due to the linear growth of critical modes $\norme{e^{t\Cal{T}^-} \vPic V_0}_{H^1_\ul(\R)} \geq C e^{\varepsilon^2 t} \norme{\vPic V_0}_{H^1_\ul(\R)}$. In our case, the slowly decaying inhomogeneous term $\Cal{S}$ prevents us to go beyond $\varepsilon^{-7/4}T_\symbapr$.

\begin{lemma}[Approximation]
\label{l:approximation} For $T_\symbapr>0$ fixed, there exists a positive constant $C_\symbapr$ such that the following holds. Let $t_0 \geq \frac{T_\symbatt}{\varepsilon^2}$, $A_0\in H^1_\ul(\R)$ and a solution $V$ of \eqref{e:system_V} such that 
\begin{equation*}
\norme{R(t_0)}_{H^1_\ul(\R)} = \norme{V(t_0) - \psi(\varepsilon, A_0)}_{H^1_\ul(\R)} \leq \varepsilon^{5/4}.
\end{equation*}
Note $A$ the solution of \eqref{e:GL} with initial condition $A_0$ at $t = t_0$, and $R(t) = V(t) - \psi(\varepsilon, A(t))$. Then for $t \geq t_0$, the error term decomposes as $R(t) = \Rc(t) + \Rs(t)$, with $\vPih_i R_i(t) = R_i(t)$. Furthermore, for $0\leq t-t_0 \leq \frac{T_\symbapr}{\varepsilon^{1/4}}$,
\begin{equation*}
\norme{R_i(t)}_{H^1_\ul(\R)} \leq C_\symbapr \varepsilon^{5/4},
\hspace{4em}
i\in\Set{\symbc, \symbs},
\end{equation*}
while for $\frac{T_\symbapr}{\varepsilon^{1/4}} \leq t-t_0 \leq \frac{T_\symbapr}{\varepsilon^{7/4}}$,
\begin{equation*}
\norme{\Rc(t)}_{H^1_\ul(\R)} \leq C_\symbapr \varepsilon^{5/4},
\hspace{4em}
\norme{\Rs(t)}_{H^1_\ul(\R)} \leq C_\symbapr \varepsilon^{9/4}.
\end{equation*}
\end{lemma}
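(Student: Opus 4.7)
The plan is to derive the evolution equation satisfied by the error $R(t) := V(t) - \psi(\varepsilon, A(t))$, then split it along critical and stable modes and close a bootstrap mirroring \cref{l:attractivity}. Inserting $V = \psi + R$ into \eqref{e:system_V} and using \eqref{e:GL} to compute $\partial_t \psi$ yields
\begin{equation*}
\partial_t R = \Cal{T}^- R + L_{\varepsilon, A} R + \Cal{M}(R) + \mathrm{Res}(\varepsilon, A) + \Cal{S}(\tilde{x}, V),
\end{equation*}
where $L_{\varepsilon, A} R$ collects the linear-in-$R$ contributions with $\psi$-dependent coefficients of size $O(\varepsilon)$, $\Cal{M}(R)$ is quadratic and cubic in $R$, and the residual $\mathrm{Res}(\varepsilon, A) := -\partial_t \psi + \Cal{T}^- \psi + \Cal{Q}^-(\psi)$ measures the failure of $\psi$ to solve the problem. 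The whole point of the construction of $\psi_\symbs$ in \cref{ss:derive-GL} is that the critical projection of the $O(\varepsilon^2)$ and $O(\varepsilon^3)$ leading contributions to $\mathrm{Res}$ cancel modulo \eqref{e:GL}, so that both $\vPic \mathrm{Res}$ and $\vPis \mathrm{Res}$ are $O(\varepsilon^3)$ in $H^1_\ul$, uniformly in time, with constants depending only on $\norme{A}_{W^{1,\infty}}$ — a quantity controlled by \cref{l:GL-atractor}.

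Following \cref{l:attractivity}, I would construct $R_\symbc$, $R_\symbs$ as the solutions (well-defined since $\Cal{T}^-$ is sectorial and the remaining terms are locally Lipschitz in $H^1_\ul$) of the decoupled system
\begin{equation*}
\begin{cases}
\partial_t R_\symbc = \Cal{T}^- R_\symbc + \vPic\bigl(L_{\varepsilon, A}(R_\symbc + R_\symbs) + \Cal{M}(R_\symbc + R_\symbs) + \mathrm{Res} + \Cal{S}\bigr),\\
\partial_t R_\symbs = \Cal{T}^- R_\symbs + \vPis\bigl(L_{\varepsilon, A}(R_\symbc + R_\symbs) + \Cal{M}(R_\symbc + R_\symbs) + \mathrm{Res} + \Cal{S}\bigr),
\end{cases}
\end{equation*}
with initial data $R_i(t_0) = \varPi_i R(t_0)$. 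Uniqueness forces $R_\symbc + R_\symbs = R$, and the mode-filters in the right-hand side enforce $\vPih_i R_i = R_i$. Applying Duhamel's formula together with the linear estimates of \cref{l:linear_behavior_V} (critical modes bounded, stable modes exponentially damped), the nonlinear estimates of \cref{l:non_linear_estimates-V} (the key point being that $\vPic$ of a quadratic interaction between $\psi$-sized quantities gains one extra power of $\varepsilon$), and the source bound from \cref{p:decay_rest_term}, one controls the rescaled quantities $\Theta_\symbc(t) := \varepsilon^{-5/4} \sup_{t_0 \leq \tau \leq t} \norme{R_\symbc(\tau)}_{H^1_\ul}$ and $\Theta_\symbs$ analogously. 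The contribution of $\Cal{S}$ integrates to at most $C\norme{U_0}_X/\sqrt{t_0} = O(\varepsilon^2)$ since $t_0 \geq T_\symbatt/\varepsilon^2$, and is therefore negligible.

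The bootstrap then proceeds in two windows, exactly as in \cref{l:attractivity}. On $[t_0, t_0 + T_\symbapr/\varepsilon^{1/4}]$ both $\Theta_\symbc$ and $\Theta_\symbs$ stay bounded by $C_\symbapr$ by a direct Gronwall-type argument. Over this same time the stable component is damped by the factor $e^{-\kappa t}$ and relaxes to the level set by its forcing, which is $\varepsilon \cdot \varepsilon^{5/4} + \varepsilon^3 = O(\varepsilon^{9/4})$ — the $\varepsilon$ comes from the coupling $L_{\varepsilon,A}$, and the $\varepsilon^3$ from the residual. Reinitializing at the improved stable bound and propagating further closes the bootstrap up to $t - t_0 \leq T_\symbapr/\varepsilon^{7/4}$ provided $\varepsilon$ is small enough.

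The main obstacle is the critical component, which is never damped by $e^{t\Cal{T}^-}$. The residual contribution to $R_\symbc$ accumulates as $\varepsilon^3 (t-t_0)$, and forcing this to remain below the target size $\varepsilon^{5/4}$ limits the approximation interval to $t - t_0 \lesssim \varepsilon^{-7/4}$, exactly as stated. This is shorter than the classical Ginzburg-Landau approximation scale $\varepsilon^{-2}$, and reflects the fact that the slowly-decaying source $\Cal{S}$ coming from the coupling with $U$ limits our ability to start the approximation window with an initial error better than $\varepsilon^{5/4}$.
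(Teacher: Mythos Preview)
Your proposal is correct and follows essentially the same approach as the paper: derive the equation for $R$, split via mode filters into $R_\symbc$ and $R_\symbs$, write the Duhamel formula, and close a two-window bootstrap combining the linear semigroup bounds of \cref{l:linear_behavior_V}, the nonlinear mode-filter estimates of \cref{l:non_linear_estimates-V}, the residual bound inherited from the Ginzburg--Landau construction, and the decay of $\Cal{S}$ from \cref{p:decay_rest_term}. The paper writes the linear-in-$R$ coupling explicitly as $2B(\psi,R)$ and defers most of the bootstrap to \cite{Schneider_94_juil}, but the structure is identical to yours.

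One minor bookkeeping difference: the paper bounds $\Cal{S}$ pointwise by $C\varepsilon^3$ for $t\ge t_0$ and integrates over the window, obtaining $(t-t_0)\varepsilon^3\le C\varepsilon^{5/4}$; it is this term that the paper identifies as saturating the $\varepsilon^{-7/4}$ timescale. You instead integrate the full $t^{-3/2}$ decay of $\Cal{S}$ to get a uniform $O(\varepsilon^2)$ contribution (which is sharper), and then attribute the timescale limitation to your bound $\vPic\mathrm{Res}=O(\varepsilon^3)$. Note that with the ansatz of \cref{ss:derive-GL} one actually has $\vPic\mathrm{Res}=O(\varepsilon^4)$, since the uncancelled $\varepsilon^3$ terms sit at Fourier frequencies $0,\pm 2,\pm 3$ or along the stable eigendirection at $\xi=\pm 1$; your $O(\varepsilon^3)$ is a valid but non-sharp upper bound. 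Either accounting closes at exactly $\varepsilon^{-7/4}$, so the argument is unaffected.
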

\begin{proof}
Insert $R(t) = V(t) - \psi(\varepsilon, A(t))$ in \eqref{e:system_V} to obtain that for $t\geq t_0$, $R$ satisfies
\begin{equation*}
\partial_t R = \Cal{T}^- R + 2B(\psi, R) + \Cal{Q}^-(R) + \Cal{S}(t) + \Rm{Res}(\psi), 
\end{equation*}
where $B$ is a symmetric bilinear function that satisfies $\Cal{Q}^-(V) = B(V, V)$, and the residual term is
\begin{equation*}
\Rm{Res}(\psi) = -\partial_t \psi + \Cal{T}^- \psi + \Cal{Q}^-(\psi).
\end{equation*}
We decompose Duhamel formula into a critical and stable part:
\begin{equation*}
\varPi_i R(t) = e^{t\Cal{T}^-} \varPi_i R(t_0) + \int_{t_0}^t e^{(t-s)\Cal{T}^-} \varPi_i \left(2B(\psi, R) + \Cal{Q}^-(R) + \Cal{S} + \Rm{Res}(\psi)\right) \d s,
\end{equation*}
with $i\in \Set{\symbc, \symbs}$.
To close a nonlinear argument, we follow \cite{Schneider_94_juil}. Since $A$ satisfies \eqref{e:GL}, the leading order terms in the residual $\Rm{Res}(\psi(\varepsilon, A))$ vanish, see \cref{ss:derive-GL}. Hence we can control $\Rm{Res}(\psi(\varepsilon, A))$ for times smaller than $\varepsilon^{-2} T_\symbapr$, as done in \cite[Lemma 14]{Schneider_94_juil}. To handle the source term -- which is not present in the above reference -- we use \cref{p:decay_rest_term} above, to conclude that $\norme{\Cal{S}(t, \cdot)}_{H^1_\ul(\R)} \leq C \varepsilon^3$ when $t \geq t_0 \geq \frac{T_\symbatt}{\varepsilon^2}$. This implies that
\begin{equation*}
\Norme{\int_{t_0}^t e^{(t-\tau)\Cal{T}^-} \vPic \Cal{S}(\tau) \d \tau}_{H^1_\ul(\R)} \leq C(t-t_0)\varepsilon^3,
\end{equation*}
while 
\begin{equation*}
\Norme{\int_{t_0}^t e^{(t-\tau)\Cal{T}^-} \vPis \Cal{S}(\tau) \d \tau}_{H^1_\ul(\R)} \leq C \varepsilon^3 \int_{t_0}^t e^{-\kappa(t-\tau)} \d \tau \leq \varepsilon^3.
\end{equation*}
Hence when $t-t_0\leq \varepsilon^{-1/4} T_\symbapr$, the estimates on $\Cal{S}$ can be sub-summed into the one on $\Rm{Res}(\psi(\varepsilon, A))$.
Now for $\varepsilon^{-1/4} T_\symbapr \leq t-t_0 \leq \varepsilon^{-7/4} T_\symbapr\leq $, we can once again sub-summed $\Cal{S}$ into $\Rm{Res}(\psi(\varepsilon, A))$.
The remaining terms are handled as in the proof of \cite[Lemma 11]{Schneider_94_juil}, see Lemma 13 therein.
\end{proof}

We are now ready to prove that $V$ remains bounded in time.
\begin{proof}\textit{\Cref{p:U_implies_V}.}\hspace{2ex}
\label{pf:V_bounded}
Beginning with $\norme{V(0)}_{H^1_\ul(\R)} \leq C \norme{V_0}_{W^{1, \infty}} \leq \varepsilon K_1 \leq \varepsilon \delta$, we wait a time $t_0 := \varepsilon^{-2} T_\symbatt$, see \cref{l:attractivity}. Now $V$ decomposes as 
$V = \Vc + \Vs$, with the leading order $\Vc$ writing as:
\begin{equation*}
\hat{\Vc}(t_0, \xi) = \Cal{F}(x\mapsto \varepsilon e^{ix} A_0(\varepsilon x))(\xi) \varrho_\symbc(\xi) + \cc,
\end{equation*}
\ie $\Vc(t_0)$ is obtained as the modulation of a large profile $x \mapsto A_0(\varepsilon x)$, with $\norme{A_0}_{H^1_\ul(\R)} \leq CK_1$ from \cref{l:attractivity}.
We propagate this ansatz by decomposing $V(t,x) = \psi(\varepsilon, A)(t,x) + R(t,x)$, see \eqref{e:def_psi_c}. Applying \cref{l:GL-atractor} and $\norme{A_0}_{L^\infty(\R)} \leq C \norme{A_0}_{H^1_\ul(\R)}$ from \cref{l:Sobolev_embeding}, we conclude that $\norme{A(\varepsilon^2 t)}_{W^{1, \infty}(\R)} \leq C_\gl + C K_1$ for all times $t\geq 0$.
However, $A$ and $\psi(\varepsilon, A)$ does not depend on the same space variable, so that rescaling $H^1_\ul(\R)$ norms loses us a $\varepsilon^{-1/2}$. Instead, we use \cref{l:Sobolev_embeding} and its injections to rescale in $W^{1, \infty}(\R)$ space:
\begin{align*}
\norme{\psi_\symbc(\varepsilon, A)(t)}_{H^1_\ul(\R)} & \leq C \norme{x\mapsto A(\varepsilon^2 t, \varepsilon x)}_{H^1_\ul(\R)}, \\
& \leq C \norme{x\mapsto A(\varepsilon^2 t, \varepsilon x)}_{W^{1, \infty}(\R)}, \\
& \leq C \norme{A(\varepsilon^2 t)}_{W^{1, \infty}(\R)}, \\
& \leq C_\gl + CK_1 e^{- t / \varepsilon^2}
\end{align*}
Recall the expression of $K_1$ \eqref{e:def_K1}.
Similar estimates lead to $\norme{\psi_\symbs(\varepsilon, A)(t)}_{H^1_\ul(\R)} \leq C_\gl + C K_1 e^{- \frac{t}{\varepsilon^2}}$.
Now for $t\geq t_0$ we can estimate
\begin{align*}
\norme{V(t)}_{H^1_\ul(\R)} & \leq \norme{\psi(\varepsilon, A)(t)}_{H^1_\ul(\R)} + \norme{V(t) - \psi(\varepsilon, A)(t)}_{H^1_\ul(\R)},\\
& \leq C \varepsilon(1 + K_1 e^{- t / \varepsilon^2}) + \norme{R(t)}_{H^1_\ul(\R)},
\end{align*}
and it only remains to bound the error $R(t)$. We initialy decompose it as $R_\symbc(t_0) = \Vc(t_0) - \varepsilon \psi_\symbc(A_0)$ and $R_\symbs(t_0) = \Vs(t_0) - \varepsilon^2 \psi_\symbs(A_0)$.
From the definition of $A_0$, 
\begin{equation*}
\pi_1^h \Vc(t_0, x) = \varepsilon e^{ix} A_0(\varepsilon x),
\end{equation*}
while definitions of $\psi_\symbc$ \eqref{e:def_psi_c} and $\pi_1^h$ \eqref{e:def_pi_1} ensure that 
\begin{equation*}
\pi_1^h (\varepsilon \psi_\symbc(A_0))(x) = \Cal{F}^{-1} \left(\chi_\symbc^h \mathbf{1}_{\xi > 0}\Cal{F}(\varepsilon e^{iy} A_0(\varepsilon y)\right) = \varepsilon e^{ix} \Cal{F}^{-1} (\chi_0\Cal{F}(A_0(\varepsilon y))).
\end{equation*}
where $\chi_0 (\xi) = \chi_\symbc^h(\xi - 1) \mathbf{1}_{\xi > 1}$ is supported near $\xi = 0$. Hence 
\begin{equation*}
\pi_1^h (\Vc(t_0) - \varepsilon \psi_\symbc(A_0))(x) = \varepsilon e^{ix} \Cal{F}^{-1}\left((1 - \chi_0) \Cal{F}(A_0(\varepsilon \cdot))\right).
\end{equation*}
Since $R_\symbc(t_0)$ has compact support in Fourier space, we can gain one derivative, and treat separately the behavior close to $1$ and $-1$:
\begin{equation*}
\norme{R_\symbc(t_0)}_{H^1_\ul(\R)} \leq C \norme{\pi_1^h R_\symbc(t_0)}_{L^2_\ul(\R)} + \norme{\pi_{-1}^h R_\symbc(t_0)}_{L^2_\ul(\R)}.
\end{equation*}
We first rescale space, then 
apply the scaled estimate from \cref{l:estimate_mode-filters}, and finally use the localization of $\chi_0$ together with the bound on $A_0$ -- see \cref{l:attractivity} -- to obtain that
\begin{align*}
\norme{\pi_1^h\Rc(t_0)}_{L^2_\ul(\R)} & \leq \frac{1}{\sqrt{\varepsilon}} \Norme{x\mapsto \pi_1^h\Rc\left(t_0, \frac{x}{\varepsilon}\right)}_{L^2_\ul(\R)}, \\
& \leq \sqrt{\varepsilon} \Norme{(1 - \chi_0(\varepsilon \xi - 1)) \pscal{\xi}^{-1/2}}_{\Cal{C}^2_b(\R)} \norme{X \mapsto e^{i X}A_0(X)}_{H^1_\ul(\R)},\\
& \leq \varepsilon^{3/2} C K_1.
\end{align*}
The conjugated part $\pi_{-1}^h R_\symbc(t_0)$ bound identically. 
Furthermore:
\begin{equation*}
\norme{R_\symbs(t_0)}_{H^1_\ul(\R)}\leq \norme{\Vs(t_0)}_{H^1_\ul(\R)} + \varepsilon^2 \norme{\psi_\symbs(A_0)}_{H^1_\ul(\R)} \leq C\varepsilon^2 + C\varepsilon^2 \norme{x\mapsto A_0(\varepsilon x)}_{H^2_\ul(\R)}.
\end{equation*}
Using that $x\mapsto A_0(\varepsilon x)$ has compact support in Fourier space, we can gain two derivatives, inject into $L^\infty(\R)$, scale, and finally inject back into $H^1_\ul(\R)$. It reads: $\norme{x\mapsto A_0(\varepsilon x)}_{H^2_\ul(\R)} \leq C \norme{A_0}_{H^1_\ul(\R)}$.
Hence $\norme{R(t_0)}_{H^1_\ul(\R)} \leq \varepsilon^{5/4}$, and \cref{l:approximation} ensures that 
\begin{equation*}
\norme{R(t)}_{H^1_\ul(\R)} \leq C \varepsilon^{5/4}
\end{equation*}
for times $t_0 \leq t \leq t_1 = t_0 + \varepsilon^{-7/4} T_\symbapr$. Applying the same Lemma as many time as needed, we conclude that the above bound on $R(t)$ holds for times $t_0 \leq t \leq t_n = t_0 + n \varepsilon^{-7/4} T_\symbapr$, without deterioration of the constant.
Hence 
\begin{equation*}
\norme{V(t)}_{H^1_\ul(\R)} \leq C \varepsilon (1 + K_1 e^{- t /\varepsilon^2}) + C \varepsilon^{5/4} \leq  C\sqrt{\mu}(1 + K_1 e^{- t /\mu}),
\end{equation*}
and the proof is complete.
\end{proof}

\appendix

\section{Ginzburg-Landau equation}
\label{ss:derive-GL}
Here we derive the Ginzburg-Landau amplitude equation from the dynamic at $-\infty$. We study the following system:
\begin{equation}
\label{e:system_GL}
\partial_t V = \Cal{T}_0 V + \Cal{T}_\mu V + \Cal{N}_2(V) + \Cal{N}_3(V),
\end{equation}
where the linear terms are given by:
\begin{equation*}
\Cal{T}_0 =
\begin{pmatrix}
d \partial_x^2 - 2\alpha & \beta \\
0 & -(1 + \partial_x^2)^2
\end{pmatrix},
\hspace{4em}
\Cal{T}_\mu =
\begin{pmatrix}
0 & 0 \\
0 & \mu
\end{pmatrix}, 
\end{equation*}
while the quadratic and cubic terms write as:
\begin{equation*}
\Cal{N}_2(V) = \begin{pmatrix}
- 3 \alpha {v_1}^2 \\ \gamma {v_1}{v_2}
\end{pmatrix}
\hspace{4em}
\Cal{N}_3(V) = - \begin{pmatrix}
\alpha {v_1}^3 \\ \sigma {v_2}^3
\end{pmatrix}.
\end{equation*}
In the following, we note $M(X)$ the symbol of the constant coefficient operator $\Cal{T}_0$, such that $\Cal{T}_0 = M(\partial_x)$. We also note $\varepsilon = \sqrt{\mu}$.
Assume that the solution to such system writes according to the following ansatz:
\begin{equation*}
V(t,x) = \varepsilon (e^{ix}A(\varepsilon^2 t, \varepsilon x) + \cc) \varrho_\symbc + \varepsilon^2(A_0\varrho_0 + e^{ix} A_1 \varrho_1 + e^{2ix} A_2 \varrho_2 + \cc).
\end{equation*}
Here and in the following $\cc$ is the complex conjugate: $z + \cc$ stands for $z + \bar{z} = 2\real{z}$. The amplitude $A\in\C$ is the main unknown, while each $A_i(\varepsilon^2 t, \varepsilon x)$ is an amplitude we will fix later. The vector $\varrho_\symbc$ is chosen to be an eigenvector for matrix $M(i)$, associated to eigenvalue $0$, while the vectors $\varrho_i$ will be fixed later. We also introduce $\varrho_\symbc^*$, in the kernel of $M(i)^*$, normalized so that $\pscal{\varrho_\symbc, \varrho_\symbc^*} = 1$. We propose 
\begin{equation*}
\varrho_\symbc =  \begin{pmatrix}
\beta \\ d + 2\alpha
\end{pmatrix}, 
\hspace{4em}
\varrho_\symbc^* := \begin{pmatrix}
0 \\ \frac{1}{d + 2\alpha}
\end{pmatrix}.
\end{equation*}
We inject the ansatz in system \eqref{e:system_GL}, and then identify terms of order $\varepsilon^l e^{ikx}$ for $l\in\N^*$ and $k\in\Z$. As usual for such computation, the $\varepsilon$ order will be trivially satisfied. Then $\varepsilon^2 e^{ikx}$ terms will determine amplitudes $A_k$ and vectors $\varrho_k$. Finally, the $\varepsilon^3 e^{ix}$ equation will lead to Ginzburg-Landau equation. Hence, we may neglect all terms of order $\varepsilon^3 e^{2ix}$ and $\varepsilon^3 e^{0}$. We easily obtain:
\begin{equation*}
\partial_t V = \varepsilon^3 (e^{ix} \partial_T A \varrho_\symbc + \cc)
\end{equation*}
with slow time variable $T = \varepsilon^2 t$. Similarly, we note $X = \varepsilon x$ the large space variable. To compute $\Cal{T}_0 u$, we use a formal Taylor expansion, see \cref{l:conjugaison_operateur_matrice}:
\begin{align*}
\Cal{T}_0 (\varepsilon e^{ix} A \varrho_\symbc) & {} = \varepsilon e^{ix} M(i + \partial_x) A \varrho_\symbc,\\
& {} = \varepsilon e^{ix}\left( A \, M(i) \varrho_\symbc + \varepsilon \partial_X A \, M'(i) \varrho_\symbc + \frac{1}{2}\varepsilon^2 \partial_{XX} A \, M''(i) \varrho_\symbc \right) + \Cal{O}(\varepsilon^4).
\end{align*} 
The computation can be easily adapt for $\Cal{T}_0 (\varepsilon^2 e^{2ikx}A_k \varrho_k)$, for $k\in\lbrace 0, 1, 2\rbrace$. 
The $\Cal{T}_\mu(V)$ term is easily developed, and we are left with the nonlinear terms. We note $B : \R^2 \times \R^2 \lra \R^2$ a symmetric bilinear application that satisfies $B(V, V) = \Cal{N}_2(V)$. It is given by:
\begin{equation*}
B(V, W) = \begin{pmatrix}
- 3 \alpha v_1 w_1 \\ \frac{\gamma}{2}(v_1 w_2 + v_2 w_1)
\end{pmatrix}.
\end{equation*}
Hence, we obtain:
\begin{align*}
\Cal{N}_2(V) = {} & \varepsilon^2 \left(e^{2ix} A^2 \Cal{N}_2(\varrho_\symbc) + \cc + 2\varepsilon^2 \absolu{A}^2 \Cal{N}_2(\varrho_\symbc)\right) + \varepsilon^3 (e^{ix} 2 A A_0 B(\varrho_\symbc, \varrho_0) + \cc) \\
& {} + \varepsilon^3 (e^{ix} 2 \bar{A} A_2 B(\varrho_\symbc, \varrho_2) + \cc) + \Cal{O}_{\absolu{k}\neq 1}(\varepsilon^3 e^{ikx} + \varepsilon^4).
\end{align*}
Similarly, the cubic term develop as:
\begin{equation*}
\Cal{N}_3(V) = \varepsilon^3 (e^ix 3 A \absolu{A}^2 \Cal{N}_3(\varrho_\symbc) + \cc) + \Cal{O}_{\absolu{k}\neq 1}(\varepsilon^3 e^{ikx} + \varepsilon^4).
\end{equation*}
We now collect previous calculus, and identify terms of same order  $\varepsilon^l e^{ikx}$. The $\varepsilon^2 e^{ikx}$ equations write
\begin{equation*}
\partial_X A \, M'(i) \varrho_\symbc + A_1 \, M(i) \varrho_\symbc = 0, 
\end{equation*}
\begin{equation*}
A_0 M(0) \varrho_0 + 2 \absolu{A}^2 \Cal{N}_2(\varrho_\symbc) = 0, 
\hspace{4em}
A_2 M(2i) \varrho_2 + A^2 \Cal{N}_2(\varrho_\symbc) = 0.
\end{equation*}

Both matrix $M(0)$ and $M(2i)$ are invertible, so that we can impose:
\begin{equation*}
A_1 = \partial_X A, 
\hspace{4em}
A_0 = \absolu{A}^2,
\hspace{4em}
A_2 = A^2,
\end{equation*}
\begin{equation*}
\varrho_0 = -2 M(0)^{-1} \Cal{N}_2(\varrho_\symbc),
\hspace{4em}
\varrho_2 = - M(2i)^{-1} \Cal{N}_2(\varrho_\symbc).
\end{equation*}
We also fix $\varrho_1$ such that $M'(i) \varrho_\symbc + M(i) \varrho_1 = 0$, which is possible due to $M'(i)\varrho_\symbc \in \range(M(i))$.
The $\varepsilon^3 e^{ix}$ equation then writes
\begin{align*}
\partial_T A\, \varrho_\symbc = {} & \partial_{XX} A \, (M''(i) \varrho_\symbc + M'(i) \varrho_1) + A
\begin{pmatrix}
0 & 0 \\ 0 & 1
\end{pmatrix} 
\varrho_\symbc\\ 
{} & + A \absolu{A}^2 \left(2 B(\varrho_\symbc, \varrho_0) + 2B(\varrho_\symbc, \varrho_2) + 3\Cal{N}_3(\varrho_\symbc)\right).
\end{align*}
Taking the scalar product with the normalized left eigenvector $\varrho_\symbc^*$ leads to the scalar Ginzburg-Landau equation. In the following, we make explicit its coefficients. 
It is easily seen that $\pscal{M''(i)\varrho_\symbc + M'(i)\varrho_1, \varrho_\symbc^*} = 4$, and that $\Pscal{
\begin{pmatrix}
0 & 0 \\ 0 & 1
\end{pmatrix}
\varrho_\symbc, \varrho_\symbc^*} = 1$. We are left with the coefficient in front of the cubic terms.
We explicitly compute 
\begin{equation*}
M(0)^{-1} = \frac{-1}{2\alpha}
\begin{pmatrix}
1 & \beta \\
0 & 2\alpha
\end{pmatrix}
\hspace{2em}
M(2i)^{-1} = \frac{-1}{9(4d + 2\alpha)}
\begin{pmatrix}
9 & \beta \\
0 & 4d + 2\alpha
\end{pmatrix}
\hspace{2em}
\Cal{N}_2(\varrho) = \begin{pmatrix}
- 3\alpha \beta^2 \\ \gamma \beta (d+2\alpha)
\end{pmatrix},
\end{equation*}
from which we deduce that:
\begin{equation*}
\varrho_0 = 
\begin{pmatrix}
\beta^2 (\gamma \frac{d+2\alpha}{\alpha} - 3) \\ 
2\gamma \beta (d+2\alpha), 
\end{pmatrix},
\hspace{4em}
\varrho_2 = \frac{1}{9(4d + 2\alpha)}
\begin{pmatrix}
\beta^2 (\gamma (d+2\alpha) - 27 \alpha) \\ 
\gamma \beta (d+2\alpha)(4d + 2\alpha) 
\end{pmatrix}.
\end{equation*}
Then, it follows
\begin{equation*}
\Cal{N}_3(\varrho) = - 
\begin{pmatrix}
\alpha \beta^3 \\ 
\sigma(d+2\alpha)^3
\end{pmatrix},
\hspace{4em}
B(\varrho_\symbc, \varrho_0) = 
\begin{pmatrix}
- 3 \alpha \beta^3(\gamma\frac{d + 2\alpha}{\alpha} - 3) \\
\beta^2 \frac{\gamma}{2}(d+2\alpha)\left(2\gamma + \gamma \frac{d+2\alpha}{\alpha} - 3 \right)
\end{pmatrix}, 
\end{equation*}
\begin{equation*}
B(\varrho_\symbc, \varrho_2) = 
\frac{1}{9(4d + 2\alpha)}
\begin{pmatrix}
- 3 \alpha \beta^3(\gamma(d + 2\alpha) - 27 \alpha) \\
\beta^2 \frac{\gamma}{2}(d+2\alpha)(\gamma(5d + 4\alpha) - 27\alpha)
\end{pmatrix}.
\end{equation*}
We can finally compute that:
\begin{align*}
\Pscal{2 B(\varrho_\symbc, \varrho_0) + 2 B(\varrho_\symbc, \varrho_2) + 3 \Cal{N}_3(\varrho_\symbc), \varrho_\symbc^*} = {} & \gamma^2 \beta^2 \left(\frac{19}{9} + (d+2\alpha)\left(\frac{1}{\alpha} + \frac{1}{9(4d + 2\alpha)}\right)\right) \\
& {} - 3\gamma \beta^2\left(1 + \frac{\alpha}{4d + 2\alpha}\right) - 3\sigma(d+2\alpha)^2.
\end{align*}
The right hand side reads as a degree $2$ polynomial in $\gamma$ which we note $P(\gamma)$. Altogether, we obtain the following Ginzburg-Landau equation:
\begin{equation}
\label{e:Ginzburg_Landau}
\tag{\gl}
\partial_T A = 4\partial_{XX} A + A + P(\gamma) A\absolu{A}^2.
\end{equation} 
Since $P$ admits two roots with distincts sign, there exists $\gamma_\gl>0$ such that for all $\gamma \in (0, \gamma_\gl)$, we have $P(\gamma)<0$, \ie hypothesis \eqref{e:hyp_Turing_surcritique} is fulfilled. Recall that $\alpha, d, \sigma >0$. We compute:
\begin{equation*}
\gamma_\gl = \frac{3(4d+3\alpha)}{2a(4d + 2\alpha)} + \sqrt{\left(\frac{3(4d + 3\alpha)}{2a(4d + 2\alpha)}\right)^2 + 3\sigma\frac{(d+2\alpha)^2}{a\beta^2}}, 
\hspace{1em}
a := \frac{19}{9} + (d+2\alpha)\left(\frac{1}{\alpha} + \frac{1}{9(4d + 2\alpha)}\right).
\end{equation*}
\begin{lemma}
\label{l:hyp_are_fulfilled}
There exists an open, nonempty set of parameters $\Omega$ such that for all $(\alpha, \beta, d, \sigma) \in \Omega$, the ordering $\gamma_\Rm{rem} < \gamma_\gl$ holds. In particular for $\gamma \in (\gamma_\Rm{rem}, \gamma_\gl)$, both hypothesis \eqref{e:hyp_marginal_stability} and \eqref{e:hyp_Turing_surcritique} holds true.
\end{lemma}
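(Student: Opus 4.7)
The plan is to exploit the fact that the lower bound $\gamma_\Rm{rem}$ depends only on $(\alpha, d)$ (and $\mu_0$), whereas the upper bound $\gamma_\gl$ depends on the full tuple $(\alpha, \beta, d, \sigma)$ through the quadratic polynomial $P$. In particular, inspecting the explicit formula for $\gamma_\gl$, the square-root term contains $\sqrt{3\sigma(d+2\alpha)^2/(a\beta^2)}$, which is unbounded as $\sigma \to +\infty$ (or, equivalently, as $\absolu{\beta} \to 0$). So the strategy is simply to fix $(\alpha, d)$, compute $\gamma_\Rm{rem}$, and then choose $\sigma$ large (or $\absolu{\beta}$ small) to push $\gamma_\gl$ above $\gamma_\Rm{rem}$.

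More precisely, first fix $\alpha, d > 0$ and $\mu_0 > 0$. The quantity
\begin{equation*}
\gamma_\Rm{rem} = 8\left(\frac{\alpha}{d}\right)^2 + 4\frac{\alpha}{d} - 2\alpha + \mu_0
\end{equation*}
is then a finite constant, and the coefficient $a = a(\alpha, d)$ is a fixed positive real. Writing
\begin{equation*}
\gamma_\gl(\sigma, \beta) = \frac{3(4d+3\alpha)}{2a(4d+2\alpha)} + \sqrt{\left(\frac{3(4d+3\alpha)}{2a(4d+2\alpha)}\right)^2 + \frac{3\sigma(d+2\alpha)^2}{a\beta^2}},
\end{equation*}
one has $\gamma_\gl(\sigma, \beta) \to +\infty$ as $\sigma \to +\infty$ at fixed $\beta \neq 0$. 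Hence there exists $\sigma_0 > 0$ such that for any $\sigma \geq \sigma_0$ and $\beta \neq 0$, one has $\gamma_\gl(\sigma,\beta) > \gamma_\Rm{rem} + 1$, say. This already exhibits a quadruple $(\alpha, \beta, d, \sigma)$ realizing the strict inequality $\gamma_\Rm{rem} < \gamma_\gl$.

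For openness: the functions $(\alpha, \beta, d, \sigma) \mapsto \gamma_\Rm{rem}(\alpha, d)$ and $(\alpha, \beta, d, \sigma) \mapsto \gamma_\gl(\alpha, \beta, d, \sigma)$ are continuous on the open set $\lbrace \alpha, d, \sigma > 0,\ \beta \neq 0 \rbrace$ (for $\gamma_\gl$, continuity follows from the explicit formula together with the fact that the discriminant under the square root is strictly positive). Consequently
\begin{equation*}
\Omega := \Set{(\alpha, \beta, d, \sigma) \in (0,+\infty)^3 \times (\R\setminus\lbrace 0\rbrace) : \gamma_\Rm{rem} < \gamma_\gl}
\end{equation*}
is an open subset of $\R^4$, and the point constructed above belongs to it, so $\Omega$ is nonempty.

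Finally, for any $(\alpha, \beta, d, \sigma) \in \Omega$, the nonempty interval $(\gamma_\Rm{rem}, \gamma_\gl)$ is well-defined; picking any $\gamma$ in it yields $\gamma > \gamma_\Rm{rem}$ so that \cref{p:essential_spectrum} applies and \eqref{e:hyp_marginal_stability} holds, and simultaneously $\gamma < \gamma_\gl$ so that $P(\gamma) < 0$, i.e.\ \eqref{e:hyp_Turing_surcritique} holds. I do not anticipate any serious obstacle: the argument is essentially a direct inspection of the two explicit expressions, and the only minor point to be careful about is verifying that the discriminant of $P$ is strictly positive (so that $\gamma_\gl$ is well-defined as a real positive quantity) on the chosen parameter range, but this is immediate from $\alpha, d, \sigma > 0$ and $\beta \neq 0$.
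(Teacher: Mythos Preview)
Your proposal is correct and follows essentially the same approach as the paper: openness via continuity of $\gamma_\Rm{rem}$ and $\gamma_\gl$, and nonemptiness via $\gamma_\gl \to +\infty$ as $\sigma \to +\infty$ (or $\beta \to 0$). One minor slip: after fixing $\beta$, your $\sigma_0$ depends on that $\beta$, so the clause ``for any $\sigma \geq \sigma_0$ and $\beta \neq 0$'' is not quite right; but since you only need a single point in $\Omega$, this is harmless.
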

\begin{proof}
The fact that $\Omega$ is open comes from the continuity of both $\gamma_\Rm{rem}$ and $\gamma_\gl$ with respect to $\alpha, \beta, d, \sigma > 0$. To see that $\Omega \neq \emptyset$, remark that $\gamma_\gl \to +\infty$ when either $\beta \to 0$ or $\sigma \to +\infty$.
\end{proof}
\begin{remark}
The ansatz we propose here only develop up to order $\varepsilon^2$, while the information we extract is held by $\varepsilon^3$ terms. If ones try to push the ansatz one order further:
\begin{align*}
V(t,x) ={} & \varepsilon (e^{ix}A \varrho_\symbc + \cc) + \varepsilon^2(e^{ix} A_{1,1} \varrho_{1,1} + e^{2ix} A_{1,2} \varrho_{1,2} + \cc + A_{1,0} \varrho_{1,0}) \\
& {} + \varepsilon^3(e^{ix} A_{2,1} \varrho_{2,1} + e^{2ix} A_{2,2} \varrho_{2,2} + e^{3ix} A_{2,3} \varrho_{2,3} + \cc + A_{2,0} \varrho_{2,0}),
\end{align*}
the only changes is the presence of an extra $A_{2,1} \pscal{L(i) \varrho_{2,1}, \varrho_\symbc^*}$ term in \eqref{e:Ginzburg_Landau}. By definition of $\varrho_\symbc^*$, this term vanishes anyway. 
\end{remark}

\section{Proof of \cref{p:control_G_lambda_22}}
\label{ss:proof_Glv}
\begin{proof}
\textit{\Cref{p:control_G_lambda_22}.\hspace{2ex}}
Here, we write $a(x,\lambda) \lesssim b(x, \lambda)$ to stand for $a(x, \lambda) \leq C b(x, \lambda)$ where $C$ is a constant. Recall we have construct ODE solutions with exponential behavior. For $0\leq j\leq 3$, we have 
\begin{equation}
\label{e:behavior_phi}
\partial_x^j \phi(\lambda, x) = e^{\nu x}\left(\nu^j + \Cal{O}_{\R^\pm\times K}(e^{-\alpha\absolu{x}})\right),
\end{equation}
where $\phi$ stands for $\phi_i^{\symbsh, \pm}$ and $\nu$ for $\nu_i^{\symbsh,\pm}(\lambda)$. Here, we noted $g(\lambda, x) = \Cal{O}_{\R^+}(f(x))$ if $\absolu{g(\lambda,x)} \lesssim f(x)$ when $x\geq 0$, and if $\lambda \mapsto g(\lambda, x)$ is holomorphic on $K$ for almost all $x\geq 0$. The according notation holds for $\Cal{O}_{\R^-}$. In particular, $\absolu{\phi_i^{\symbsh, \pm}(x)} \lesssim e^{x \real(\nu_i^\pm)}$ when $x\in\R_\pm$. In the following, we drop the \guillemet{$\symbsh$} exponent. Then, the Green function expresses using the decaying solutions:
\begin{equation*}
\Glv(x,y) = \left\lbrace
\begin{array}{ll}
\sum_{i=1}^2 b_i(\lambda,y) \phi_i^{+}(\lambda,x) & \text{ if }y<x,\\[1.5ex]
\sum_{i=3}^4 b_i(\lambda,y) \phi_i^{-}(\lambda,x) & \text{ if }x<y,
\end{array}\right.
\end{equation*}
where the coefficients $b_i(\lambda, y)$ are determined by the jump condition:
\begin{equation*}
\begin{pmatrix}
\phi_1^{+} & \phi_2^{+} & \phi_3^{-} & \phi_4^{-}\\[1.2ex]
\partial_x \phi_1^{+} & \partial_x \phi_2^{+} & \partial_x \phi_3^{-} & \partial_x \phi_4^{-}\\[1.2ex]
\partial_{xx} \phi_1^{+} & \partial_{xx} \phi_2^{+} & \partial_{xx} \phi_3^{-} & \partial_{xx} \phi_4^{-}\\[1.2ex]
\partial_{xxx} \phi_1^{+} & \partial_{xxx} \phi_2^{+} & \partial_{xxx} \phi_3^{-} & \partial_{xxx} \phi_4^{-}\\[1.2ex]
\end{pmatrix}
(\lambda, y) \cdot 
\begin{pmatrix}
b_1 \\ b_2 \\ -b_3 \\ -b_4 
\end{pmatrix}
= 
\begin{pmatrix}
0 \\ 0 \\ 0 \\ -1
\end{pmatrix}.
\end{equation*}
Recall notation \eqref{e:notation_determinant} for determinants. We use Cramer's rule to compute the coefficients $b_i(\lambda, y)$:
\begin{equation}
\label{e:propGlv:expression_b}
b_1 = \frac{\Det(\phi_2^{+},\phi_3^{-}, \phi_4^{-})}{\Wv},
\hspace{4em}
b_2 = - \frac{\Det(\phi_1^{+},\phi_3^{-}, \phi_4^{-})}{\Wv},
\end{equation}
and 
\begin{equation*}
b_3 = \frac{\Det(\phi_1^{+},\phi_2^{+}, \phi_4^{-})}{\Wv},
\hspace{4em}
b_4 = - \frac{\Det(\phi_1^{+},\phi_2^{+}, \phi_3^{-})}{\Wv}.
\end{equation*}
It may happens that $\nu_2^{+}-\nu_1^{+}$ or $\nu_4^{-}-\nu_3^{-}$ vanish, causing a singularity in $(b_1, b_2)$ or $(b_3,b_4)$. However, this singularity can be erased in the expression of $\Glv$ since at first order $b_1(\lambda,y) + b_2(\lambda,y) = \frac{1}{\Wv} \Det(\phi_2 - \phi_1, \phi_3, \phi_4) = \Cal{O}_\lambda(1)$. 
Recall that we assume the $\nu_i^\pm$ are sorted by real part: $\real{\nu_1^+} \leq \real{\nu_2^+} \leq \real{\nu_3^+} \leq \real{\nu_4^+}$ (the same ordering holds for the $\nu_i^-$), and that from \cref{l:spatial_eigenvalues_localization} --  \cref{i:spectral_gap}, there exists $\kappa_2>0$ such that:
\begin{equation}
\label{e:propGlv:spectral_gap}
\real{\nu_{1,2}^\pm} \leq -\kappa_2, \hspace{4em} \kappa_2 \leq \real{\nu_{3,4}^\pm} 
\end{equation}
We now prove the claimed result, depending on the ordering between $x$, $y$ and $0$.
\begin{enumerate}[label = (\textit{\roman*})]
\item \label{i:proof_G_22_1}\textcolor{black}{$y<0<x$.} We need to decompose $\phi_2^+(y)$ into the $(\phi_i^-)_{1\leq i \leq 4}$ basis: $\phi_2^+ = \sum_{i=1^4} c_i \phi_i^-$. We can differentiate three times this decomposition to obtain a Cramer system. Solving it leads to
\begin{align*}
\phi_2^+(y) = & \frac{\Det(\phi_2^+, \phi_2^-, \phi_3^-, \phi_4^-)}{\Det(\phi_1^-, \phi_2^-, \phi_3^-, \phi_4^-)} \phi_1^-(y) + 
\frac{\Det(\phi_1^-, \phi_2^+, \phi_3^-, \phi_4^-)}{\Det(\phi_1^-, \phi_2^-, \phi_3^-, \phi_4^-)} \phi_2^-(y) \\
& + \frac{\Det(\phi_1^-, \phi_2^-, \phi_2^+, \phi_4^-)}{\Det(\phi_1^-, \phi_2^-, \phi_3^-, \phi_4^-)} \phi_3^-(y) + 
\frac{\Det(\phi_1^-, \phi_2^-, \phi_3^-, \phi_2^+)}{\Det(\phi_1^-, \phi_2^-, \phi_3^-, \phi_4^-)} \phi_4^-(y).
\end{align*}
Once again, the $y$ dependence in each of the fraction can be dropped thanks to \cref{l:ode_wronskien}. The numerator is holomorphic with respect to $\lambda$, hence bounded when $\lambda \in K$, and the denominator does not vanish.\footnote{A vanishing determinant would implies that $\lambda$ is an eigenvalue, which can not hold due to $\real{\lambda}\geq -2 \eta$.} In the following, we may simply write
\begin{equation*}
c_1^-(\varphi) := \frac{\Det(\varphi, \phi_2^-, \phi_3^-, \phi_4^-)}{\Det(\phi_1^-, \phi_2^-, \phi_3^-, \phi_4^-)}
\end{equation*}
to note the first coefficient of $\varphi$ when decomposed in the $(\phi_i^-)_{1\leq i \leq 4}$ basis. We extend this notation to other coefficients: $c_i^-(\varphi)$, and to the decomposition into $(\phi_i^+)_{1\leq i \leq 4}$ basis: $c_i^+(\varphi)$.

Using the above decomposition into the expression of $\Glv$, we obtain:
\begin{align}
\nonumber
\Absolu{\Det(\phi_2^{+},\phi_3^{-}, \phi_4^{-})(y)} & {} = \Absolu{ \sum_{i = 1}^2 c_i^-(\phi_2^{+}) \Det(\phi_i^{-},\phi_3^{-}, \phi_4^{-})(y)},\\ 
& \lesssim e^{\real(\nu_1^- + \nu_3^- + \nu_4^-)y} + e^{\real(\nu_2^- + \nu_3^- + \nu_4^-)y},
\label{e:propGlv:estimateD}
\end{align}
since $\lambda \mapsto \Det(\phi_i^{-},\phi_3^{-}, \phi_4^{-})(0)$ is holomorphic on $K$. Using again \cref{l:ode_wronskien}, we see that
\begin{equation*}
\Wv(\lambda, y) = e^{\real(\nu_1^- + \nu_2^- + \nu_3^- + \nu_4^-)y}\ \Wv(\lambda, 0).
\end{equation*}
Using \cref{p:point_spectrum}, we see that $\Wv(\lambda, 0)\gtrsim 1$ when $\lambda$ lies in $K$. Collecting the above estimates \eqref{e:propGlv:estimateD}, \eqref{e:behavior_phi} and \eqref{e:propGlv:spectral_gap}, we now see that
\begin{equation*}
\Absolu{\phi_1^{+}(x) \frac{\Det(\phi_2^{+},\phi_3^{-}, \phi_4^{-})(y)}{\Wv(y)}} \lesssim e^{x\real(\nu_1^+)  -y\real(\nu_2^-)} + e^{x\real(\nu_1^+) - y\real(\nu_1^-)} \lesssim e^{-\kappa_2 (x-y)}.
\end{equation*}
The exact same approach for the second term leads to:
\begin{equation*}
\Absolu{\phi_2^{+}(x) \frac{\Det(\phi_1^{+},\phi_3^{-}, \phi_4^{-})(y)}{\Wv(y)}} \lesssim e^{x\real(\nu_2^+)  -y\real(\nu_2^-)} + e^{x\real(\nu_2^+) - y\real(\nu_1^-)} \lesssim e^{-\kappa_2 (x-y)}.
\end{equation*}
\end{enumerate}
The first case is now done. For the following ones, same tools are used. We nevertheless give the general plan of the proofs to clarify some points.
\begin{enumerate}[label = (\textit{\roman*})]
\addtocounter{enumi}{1}
\item \textcolor{black}{$0<y<x$.}\label{i:propGlv:case2} We decompose both $\phi_3^-(y)$ and $\phi_4^-(y)$ into the $(\phi_i^+)_{1\leq i \leq 4}$ basis, and use the ordering \eqref{e:propGlv:spectral_gap} of the spatial eigenvalues to obtain
\begin{align*}
\Absolu{\frac{\Det(\phi_2^{+},\phi_3^{-}, \phi_4^{-})(y)}{\Wv(y)}} & \lesssim \sum_{\substack{i \neq j\\[0.3ex] i,j\neq 2}} \Absolu{ c_i^+(\phi_3^{-}) c_j^+(\phi_4^{-}) \frac{\Det(\phi_2^{+},\phi_i^{+}, \phi_j^{+})(y)}{e^{\real(\nu_1^+ + \cdots + \nu_4^+)y}}}, \\
& \lesssim e^{-y\real(\nu_1^+)} + e^{-y\real(\nu_3^+)} + e^{-y\real(\nu_4^+)}.
\end{align*}
The second and third term are treated directly: $-y\real(\nu_{3,4}^+) \leq -y\kappa_2 \leq y\kappa_2$. For the first term, we need to use the ordering of $x$ and $y$. Since $x-y \geq 0$, we have $\real(\nu_{1}^+)(x-y) \leq -\kappa_2 (x-y)$. Hence:
\begin{equation*}
\Absolu{\phi_1^{+}(x) \frac{\Det(\phi_2^{+},\phi_3^{-}, \phi_4^{-})(y)}{\Wv(y)}}  \lesssim e^{x\real(\nu_1^+) - y\real(\nu_1^+)} + e^{x\real(\nu_1^+) - y\real(\nu_{3,4}^+)}\lesssim e^{-\kappa_2(x-y)},
\end{equation*}
A similar argument leads to:
\begin{equation*}
\Absolu{\phi_2^{+}(x) \frac{\Det(\phi_1^{+},\phi_3^{-}, \phi_4^{-})(y)}{\Wv(y)}}  \lesssim e^{x\real(\nu_2^+) - y\real(\nu_2^+)} + e^{x\real(\nu_2^+) - y\real(\nu_{3,4}^+)}\lesssim e^{-\kappa_2(x-y)}.
\end{equation*}
\item \textcolor{black}{$y<x<0$.} Here we need to decompose both $\phi_1^+(x)$ and $\phi_2^+(y)$. Taken independently, both term $\phi_1^+ b_1$ and $\phi_2^+ b_2$ do not decay as claimed, due to the following asymmetric growing rates, which correspond to terms where the two projections are done on the same element of $(\phi_i^-)_{1\leq i \leq 4}$:
\begin{equation*}
E_1(\lambda, x, y) := c_1^-(\phi_1^+) c_1^-(\phi_2^+) \phi_1^-(x)\frac{\Det(\phi_1^-, \phi_3^-, \phi_4^-)(y)}{\Wv(y)} \sim e^{\nu_1^- x - \nu_2^-y},
\end{equation*}
and
\begin{equation*}
E_2(\lambda, x, y) := c_2^-(\phi_1^+) c_2^-(\phi_2^+) \phi_2^-(x)\frac{\Det(\phi_2^-, \phi_3^-, \phi_4^-)(y)}{\Wv(y)}\sim e^{\nu_2^- x - \nu_1^-y}.
\end{equation*}
However, this terms appear both in $\phi_1^+ b_1$ and $\phi_2^+ b_2$, so that they cancel out in the expression of $\Glv$, recall from \eqref{e:propGlv:expression_b} that $b_1$ and $b_2$ are obtained with opposite sign in their expressions. We compute
\begin{align*}
\Absolu{\phi_1^+ b_1 - E_1 - E_2}(x, y) \lesssim & \left(e^{x\real(\nu_1^-)} + e^{x\real(\nu_3^-)} + e^{x\real(\nu_4^-)}\right) e^{-y\real(\nu_1^-)}\\
& + \left(e^{x\real(\nu_2^-)} + e^{x\real(\nu_3^-)} + e^{x\real(\nu_4^-)}\right) e^{-y\real(\nu_2^-)},\\
\lesssim & \ e^{-\kappa_2(x-y)}.
\end{align*}
For the second inequality we have used the same method as in the previous case. From one hand $(x-y)\real(\nu_1^+) \leq -\kappa_2(x-y)$ due to the ordering of $x$ and $y$. From the other hand, $x\real(\nu_3) \leq x\kappa_2 \leq -x\kappa_2$ coupled with $-y\real(\nu_1^-)\leq -y\kappa_2$.
Similar computations leads to 
\begin{equation*}
\Absolu{\phi_2^+b_2 + E_1 +  E_2}(x, y) \lesssim e^{-\kappa_2(x-y)}.
\end{equation*}
We conclude using triangular inequality: $\absolu{\phi_1^+(x)b_1(y) + \phi_2^+(x)b_2(y)} \lesssim e^{-\kappa_2(x-y)}$.
\end{enumerate}
We are now done with all cases where $y<x$. The $x<y$ cases are mirror versions of the three above cases. Computations can be adapted easily, we omit them.
This complete the proof of the first stated inequality. We now treat the second one \eqref{e:control_G_lambda_22_extra}. Remark that when $y\geq 1$, \eqref{e:control_G_lambda_22_extra} adds no information to the first inequality, since $\omega_{\kappa_2,0}(y) = 1$. Hence we are only left with the case $y\leq 0 \leq x$, in which we have:
\begin{equation*}
\Absolu{\Glv(x,y)} \leq C e^{-\kappa_2\absolu{x-y}} = C e^{-\frac{\kappa_2}{2}(x-y)} e^{\frac{\kappa_2}{2} y} e^{-\frac{\kappa_2}{2} x} \leq e^{-\frac{\kappa_2}{2}\absolu{x-y}} \omega_{\kappa_2/2, 0}(y).
\end{equation*}
Up to a change of notation $\tilde{\kappa}_2 = \frac{1}{2}\kappa_2$, this is the claimed \eqref{e:control_G_lambda_22_extra}. The proof is now complete.
\end{proof}

\section{Three equilibrium points}
\label{ss:3-equilibriums}
It appears that for certain sets of parameters $(d, \alpha, \beta, \gamma, \sigma)$, system \eqref{e:original_system} admits nontrivial equilibrium $(u_{eq}, v_{eq})$ with $v_{eq} \sim \sqrt{\mu}$. To keep dynamic as simple as possible, for example when working with numerical simulations, we can restrict to parameters that satisfy the following statement. This is by no mean necessary to our study, since the main result is perturbative.
\begin{proposition}[Three equilibrium points]
\label{p:3equilibriums}
Set $\mu_0<1$ and $\gamma > 0$. Then for positive $d, \alpha, \beta$, there exists $\sigma_0>0$ such that for all $\sigma>\sigma_0$, and $0\leq \mu < \mu_0$, system \eqref{e:original_system} admits only the steady states $(u,v) = (\pm 1, 0)$ and $(u,v) = (0, 0)$.
\end{proposition}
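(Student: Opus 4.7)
The plan is to reduce the question to a purely algebraic system on $\mathbb{R}^2$ for spatially constant $(u,v)$, then eliminate $v$ and study one scalar polynomial inequality. Observe first that on a constant $v$ the Swift--Hohenberg operator acts as $(\partial_{xx}+1)^2 v = v$, so the constant steady states of \eqref{e:original_system} are precisely the solutions of
\begin{equation*}
\alpha u(1-u^2) + \beta v = 0, \qquad v\bigl[(\mu-1) - \sigma v^2 - \gamma(1-u)\bigr] = 0.
\end{equation*}

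The case $v = 0$ immediately yields the three stated equilibria via $u(1-u^2) = 0$. So the task reduces to showing that the branch $v \neq 0$ is empty as soon as $\sigma$ is large. In that branch the second equation gives $\sigma v^2 = \gamma(u-1) - (1-\mu)$; since the left-hand side is strictly positive and $\gamma > 0$, $\mu < \mu_0 < 1$, this forces $u > 1 + (1-\mu)/\gamma > 1$. Eliminating $v$ via $v^2 = (\alpha/\beta)^2 u^2 (u^2-1)^2$ coming from the first equation, any such equilibrium must satisfy
\begin{equation*}
\sigma \left(\frac{\alpha}{\beta}\right)^2 u^2(u^2-1)^2 = \gamma(u-1) - (1-\mu).
\end{equation*}

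The key quantitative step is a clean lower bound on the quartic by a quadratic in $u-1$. For $u \geq 1$ I would use $u^2 \geq 1$ and $u+1 \geq 2$ to obtain
\begin{equation*}
u^2(u^2-1)^2 = u^2(u-1)^2(u+1)^2 \geq 4(u-1)^2,
\end{equation*}
so that with $t := u - 1 > 0$ the displayed equality forces the inequality
\begin{equation*}
4\sigma(\alpha/\beta)^2 t^2 - \gamma\, t + (1-\mu) \leq 0.
\end{equation*}
This quadratic in $t$ admits a real root iff its discriminant $\gamma^2 - 16\sigma(\alpha/\beta)^2(1-\mu)$ is nonnegative. Setting
\begin{equation*}
\sigma_0 := \frac{\gamma^2}{16(\alpha/\beta)^2(1-\mu_0)},
\end{equation*}
for every $\sigma > \sigma_0$ and every $\mu \in [0,\mu_0)$ one has $16\sigma(\alpha/\beta)^2(1-\mu) > 16\sigma_0(\alpha/\beta)^2(1-\mu_0) = \gamma^2$, the discriminant is strictly negative, the quadratic is strictly positive on all of $\mathbb{R}$, and the branch $v \neq 0$ is therefore empty.

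I do not foresee any real obstacle: the whole argument is algebraic once one exploits $(\partial_{xx}+1)^2 v = v$ on constants, and the only choice of craft is the explicit bound $u^2(u^2-1)^2 \geq 4(u-1)^2$ on $u \geq 1$, whose virtue is that it produces a threshold $\sigma_0$ uniform over $\mu \in [0,\mu_0)$.
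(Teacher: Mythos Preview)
Your proof is correct and is essentially the same argument as the paper's, just phrased algebraically rather than geometrically. The paper views the $v\neq 0$ branch as the intersection of the two curves $v=\frac{\alpha}{\beta}u(u^2-1)$ and $u=1+\frac{1}{\gamma}(1-\mu+\sigma v^2)$, replaces the first by its tangent line at $(1,0)$ using convexity, and checks non-intersection with the second; this leads to exactly the quadratic $\sigma v^2-\frac{\gamma\beta}{2\alpha}v+(1-\mu)>0$, whose discriminant condition gives the same threshold $\sigma_0=\frac{\gamma^2\beta^2}{16\alpha^2(1-\mu_0)}$ as yours. Your bound $u^2(u^2-1)^2\geq 4(u-1)^2$ for $u\geq 1$ is precisely the algebraic content of the tangent-line/convexity step, so the two arguments coincide.
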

\begin{proof}
Assume that $(u,v)\in\R^2$ is a constant solution of \eqref{e:original_system}. If $v = 0$, the only solutions are $(\pm1, 0)$ and $(0, 0)$. Else, the point $(u,v)$ lies in the intersection of two curves:
\begin{equation*}
\begin{cases}
v = - \frac{\alpha}{\beta}u(1-u^2),\\
u = 1 - \frac{1}{\gamma}\left(\mu - 1 - \sigma v^2\right).
\end{cases}
\end{equation*}
For $v \leq 0$, they do not intersect since they respectively ensures $u\leq 1$ and $u > 1$.\footnote{The second inequality comes from $\gamma > 0$ and $\mu < 1$.}
For $v > 0$, we use the tangent line to the first curve at $(u,v) = (1,0)$:
\begin{equation*}
T = \Set{\left(1 + u, \frac{2\alpha}{\beta} u\right) : u\in\R} = \Set{\left(1 + \frac{\beta}{2\alpha}v , v\right) : v\in\R}.
\end{equation*}
Remark that the function $u\mapsto -\frac{\alpha}{\beta} u (1 - u^2)$ is convex for $u > 1$. Hence the two curves do not intersect for positive $v$ provided the second curve do not intersect $T$, which reads:
\begin{equation*}
1 + \frac{\beta}{2\alpha} v < 1 - \frac{1}{\gamma}(\mu - 1 - \sigma v^2) 
\hspace{2em}
\Leftrightarrow
\hspace{2em}
\sigma v^2 - \frac{\gamma \beta}{2\alpha} v + 1-\mu > 0.
\end{equation*}
It holds true for sufficiently large $\sigma>0$. This complete the proof.
\end{proof}

\section{Proof of \cref{l:ODE_solutions}}
\label{ss:proof_lemma_ODE}
\begin{proof}
\textit{\Cref{l:ODE_solutions}.\hspace{2ex}}
We make the change of variable $z(t) = e^{-tA_\infty}y(t)$, so that 
\begin{equation}
\label{e:lemme_edo:edo2}
z' = \left(A(t) - A_\infty\right)z, 
\end{equation}
which writes as 
\begin{equation}
\label{e:lemme_ode:eq_integral_1}
z(t) = z(0) + \int_0^t  R(s)z(s) \d s = v - \int_t^{+\infty} R(s) z(s) \d s
\end{equation}
with $\norme{R(t)} = \norme{A(t) - A_\infty} \leq C e^{-\alpha t}$, and $v = z(0) + \int_0^{+\infty} R(s) z(s) \d s$. Hence for $z$ bounded, equation \eqref{e:lemme_edo:edo2} together with condition $z(t)\lra v$ when $t\to +\infty$ is equivalent to \eqref{e:lemme_ode:eq_integral_1}. We now make the change of variable $x(t) = e^{\frac{\alpha}{2} t} (z(t) - v)$, that satisfies:
\begin{equation}
\label{e:lemme_ode:eq_integral_2}
x(t) = (K x)(t) := -\int_t^{+\infty} e^{-\frac{\alpha}{2}(t-s)} R(s) (x(s) + v) \d s.
\end{equation}
The operator $K$ is a contraction from $L^\infty(T, +\infty)^{n}$ to itself, provided $T$ is large enough. Indeed, for $t\geq T$ we have
\begin{equation*}
\norme{Kx - K\tilde{x}}(t) \leq \norme{x-\tilde{x}}_{L^\infty(T, +\infty)}\int_t^{+\infty} e^{-\frac{\alpha}{2}(t-s) - \alpha s} \d s \leq \frac{2e^{-\alpha T}}{\alpha} \norme{x-\tilde{x}}_{L^\infty(T, +\infty)}.
\end{equation*} 
The Picard fixed point theorem ensures the existence of a unique $\kappa\in L^\infty(T,+\infty)$ solution of \eqref{e:lemme_ode:eq_integral_2}. Reverting back all changes of variable, we obtain a solution of \eqref{e:lemme_edo:edo1} with
\begin{equation*}
y(t) = e^{tA_\infty}(v + e^{-\frac{\alpha}{2} t} \kappa(t))
\end{equation*}
as claimed. By Cauchy-Lipschitz theorem, we flow this solution backward to define it on $\R$. 

We now assume that $(v_i)_{1\leq i\leq n}$ is a basis of $\R^n$. Since the family $(e^{-tA_\infty} y_i(t))_i$ converges to $(v_i)_i$ when $t\to +\infty$, we obtain that 
\begin{equation*}
\det(e^{-tA_\infty} y_1(t), \dots, e^{-tA_\infty} y_n(t)) = \det(e^{-tA_\infty}) \det(y_1(t), \dots, y_n(t))
\end{equation*}
is nonzero for $t$ large enough, by continuity of the determinant. This ensures that $(y_i)_i$ is a basis for the solutions of equation \eqref{e:lemme_edo:edo1}.

Finally, we suppose that $A(t)$, $A_\infty$ and $v$ are holomorphic with respect to $\lambda$. Then the operator $K_\lambda: L^\infty(T, +\infty)^n \to L^\infty(T, +\infty)^n$ defined by \eqref{e:lemme_ode:eq_integral_2} is holomorphic with respect to $\lambda$, and so is $\kappa$. This conclude the proof.
\end{proof}

\section{Standard Lemmas}
\begin{lemma}
\label{l:conjugaison_operateur_matrice}
Let $M$ be a matrix with polynomial coefficients: $M(X) = \left(P_{i,j}(X)\right)_{1\leq i,j \leq n}$, we note $d:= \max_{i,j} \deg(P_{i,j})$ the greatest degree in $M$, and $\Cal{L}$ the associated differential operator: $\Cal{L} = M(\partial_x)$. Then the conjugation of $\Cal{L}$ by the weight $e^{ \vartheta x}$ writes as:
\begin{equation*}
e^{-\vartheta x} \Cal{L} e^{\vartheta x} = M(\vartheta + \partial_x) = \sum_{k=0}^{d} \frac{1}{k!} M^{(k)}(\vartheta) \partial_x^k,
\end{equation*}
where $M^{(k)}$ is the matrix whose coefficients are the polynomial's derivatives: $(P_{i,j}^{(k)})_{1\leq i,j\leq n}$.
\end{lemma}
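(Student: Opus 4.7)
The identity is a standard conjugation rule for constant-coefficient differential operators; the plan is to reduce to the scalar monomial case, verify it by induction, and then repackage the result with a finite Taylor expansion.

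First I would observe that the map $\Cal{L} \mapsto e^{-\vartheta x}\Cal{L} e^{\vartheta x}$ is linear and commutes with taking entries of a matrix operator: for $M(X) = (P_{i,j}(X))_{i,j}$, conjugation acts on each $(i,j)$-entry independently, so it suffices to prove the result when $\Cal{L} = P(\partial_x)$ is scalar with $P$ a polynomial. By further linearity in $P$, everything reduces to showing that
\begin{equation*}
e^{-\vartheta x}\, \partial_x^k\, e^{\vartheta x} = (\vartheta + \partial_x)^k \qquad \text{for all } k\in\N.
\end{equation*}

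I would prove this by induction on $k$. The base case $k=0$ is trivial, and for $k=1$ the Leibniz rule gives $\partial_x(e^{\vartheta x} f) = \vartheta e^{\vartheta x} f + e^{\vartheta x} \partial_x f$, hence $e^{-\vartheta x}\partial_x e^{\vartheta x} = \vartheta + \partial_x$. The induction step follows from $\partial_x^{k+1} = \partial_x \circ \partial_x^k$ and inserting $e^{\vartheta x} e^{-\vartheta x} = 1$ between the factors:
\begin{equation*}
e^{-\vartheta x}\partial_x^{k+1} e^{\vartheta x} = \bigl(e^{-\vartheta x}\partial_x e^{\vartheta x}\bigr)\bigl(e^{-\vartheta x}\partial_x^k e^{\vartheta x}\bigr) = (\vartheta + \partial_x)(\vartheta + \partial_x)^k = (\vartheta + \partial_x)^{k+1}.
\end{equation*}
Writing $P(X) = \sum_k a_k X^k$ and using linearity then yields $e^{-\vartheta x} P(\partial_x) e^{\vartheta x} = P(\vartheta + \partial_x)$, which establishes the first claimed equality in the matrix case as well.

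For the second equality, I would use the fact that for a polynomial $P$ of degree $\leq d$, Taylor's formula at the point $\vartheta$ is exact and terminates at order $d$:
\begin{equation*}
P(\vartheta + Y) = \sum_{k=0}^{d} \frac{1}{k!}\, P^{(k)}(\vartheta)\, Y^k.
\end{equation*}
Substituting the formal indeterminate $Y = \partial_x$ and observing that the derivatives commute with scalars (so the ordering is unambiguous), this rewrites $P(\vartheta + \partial_x)$ in the stated form. Applying this entry-wise to the matrix $M$ gives the final identity. No real obstacle arises: the only point worth mentioning is the finiteness of the Taylor sum, which is guaranteed by the polynomial nature of the symbol.
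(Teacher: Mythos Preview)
Your proof is correct and follows essentially the same approach as the paper: reduce to the scalar monomial case by linearity and the entrywise structure, establish $e^{-\vartheta x}\partial_x^k e^{\vartheta x} = (\vartheta+\partial_x)^k$ (you spell out the induction, the paper simply says ``raise to the power $d$''), and then use the exact Taylor expansion of the polynomial symbol.
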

\begin{proof}
We first show the scalar case $n = 1$. By linearity, it is enough to consider the case $\Cal{L} = \partial_x^d$, which corresponds to $P(X) = X^d$. It can be easily computed that $e^{-\vartheta x}\partial_x\left(e^{\vartheta x} \cdot \right) = (\vartheta + \partial_x)$, rising it to the power $d$ reads: $e^{-\vartheta x}\Cal{L}\left(e^{\vartheta x} \cdot \right) = (\vartheta + \partial_x)^d = P(\vartheta + \partial_x)$. To finish the scalar case, we do a Taylor expansion at order $d$. Such expansion is exact due to $P$ being polynomial:
\begin{equation*}
P(\vartheta + X) = \sum_{k = 0}^d \frac{P^{(k)}(\vartheta)}{k!} X^k.
\end{equation*} 
We then formally replace $X$ by $\partial_x$. The case of a matrix of differential operator is straightforward, is suffice to apply the above to each component of the matrix.
\end{proof}

\begin{lemma}
\label{l:ode_wronskien}
Let $\Cal{L} = \sum_{j=0}^n a_j(x)\partial_x^j$ be a differential operator with coefficients $x\mapsto a_j(x)$. Let $\phi_1, \dots, \phi_n$ be solutions of the ODE 
\begin{equation}
\label{e:ode_wronskien}
(\lambda - \Cal{L}) \phi = 0,
\end{equation}
and note $\Phi_i = \transpose{(\phi_i, \dots, \partial_x^{n-1} \phi_i)}$. Then 
\begin{equation*}
\det(\Phi_1, \dots, \Phi_n)(\lambda, x) = \exp\left(-\int_0^x a_{n-1}(s) \d s\right) 
\det(\Phi_1, \dots, \Phi_n)(\lambda, 0).
\end{equation*}
\end{lemma}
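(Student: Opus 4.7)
The plan is to prove Abel's identity by deriving a scalar ODE for the Wronskian
$W(\lambda,x) := \det(\Phi_1(x),\dots,\Phi_n(x))$
and then integrating it. I will work row-wise: the matrix whose determinant defines $W$ has rows
$R_k(x) = (\partial_x^{k-1}\phi_1(x),\dots,\partial_x^{k-1}\phi_n(x))$ for $k=1,\dots,n$.

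First I would differentiate $W$ in $x$ using the multi-linearity of the determinant:
\begin{equation*}
W'(x) \;=\; \sum_{k=1}^n \det\bigl(R_1,\dots,R_{k-1},R_k',R_{k+1},\dots,R_n\bigr).
\end{equation*}
For $k<n$ the derivative $R_k' = R_{k+1}$ already appears as the $(k+1)$-th row of the matrix, so by the alternating property each such determinant vanishes. Hence only the term with the last row surviving remains:
\begin{equation*}
W'(x) \;=\; \det\bigl(R_1,\dots,R_{n-1},R_n'\bigr), \qquad R_n'=(\partial_x^n\phi_1,\dots,\partial_x^n\phi_n).
\end{equation*}

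Next I would use the ODE $(\lambda-\Cal{L})\phi_j=0$, i.e.\ $\sum_{j=0}^n a_j(x)\partial_x^j\phi_j=\lambda\phi_j$, and normalize so that $a_n\equiv 1$ (this is implicit in the statement, since otherwise the exponent would be $-\int_0^x (a_{n-1}/a_n)\,\mathrm d s$; in the applications of this lemma in \cref{p:control_G_lambda_22} the operator $\Lv$ has constant leading coefficient, so this is harmless). Solving for $\partial_x^n \phi_j$ gives
\begin{equation*}
R_n' \;=\; (\lambda-a_0)\,R_1 \;-\; a_1 R_2 \;-\;\cdots\;-\; a_{n-1}(x)\,R_n.
\end{equation*}
Inserting this into the surviving determinant and again invoking multi-linearity plus the vanishing of determinants with repeated rows, every term except the one proportional to $R_n$ dies. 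We obtain
\begin{equation*}
W'(x) \;=\; -a_{n-1}(x)\,\det(R_1,\dots,R_n) \;=\; -a_{n-1}(x)\,W(x).
\end{equation*}

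Finally I would integrate this scalar linear ODE for $W$ (with parameter $\lambda$ frozen) between $0$ and $x$ to conclude
\begin{equation*}
W(\lambda,x) \;=\; \exp\!\left(-\int_0^x a_{n-1}(s)\,\mathrm d s\right) W(\lambda,0),
\end{equation*}
which is the claimed identity. There is no real obstacle here; the only point worth flagging is the normalization $a_n\equiv 1$ implicit in the statement, and the verification that the rows $R_k'$ for $k<n$ genuinely coincide with existing rows (a direct consequence of the definition $\Phi_i=\transpose{(\phi_i,\dots,\partial_x^{n-1}\phi_i)}$).
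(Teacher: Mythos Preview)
Your proof is correct and follows the same overall strategy as the paper: derive the scalar ODE $W'(x)=-a_{n-1}(x)W(x)$ and integrate. The only difference is in how the derivative of the Wronskian is computed---you use multilinearity of the determinant row by row, whereas the paper uses the Jacobi formula $\partial_x\det(\Phi)=\mathrm{Tr}\bigl(\transpose{\mathrm{Com}(\Phi)}\,\partial_x\Phi\bigr)$ together with $\partial_x\Phi=A\Phi$ and cyclicity of the trace to arrive at $\partial_x\det(\Phi)=\mathrm{Tr}(A)\det(\Phi)$. Both are standard derivations of Abel's identity; your route is slightly more elementary (no adjugate needed), while the paper's is more compact. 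Your remark on the implicit normalization $a_n\equiv1$ is well taken and applies equally to the paper's proof, since the companion matrix $A$ used there has trace $-a_{n-1}$ only under that assumption.
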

\begin{proof}
For the proof, we will note $\Phi$ the $n\times n$ matrix whose $i$-th column is $\Phi_i$.
Remark that $\Cal{W}(\lambda, x) := \det(\Phi)(\lambda, x)$ satisfy:
\begin{align*}
\partial_x \Cal{W} = \Rm{Tr}\left(\transpose{\Rm{Com}(\Phi)}\cdot \partial_x(\Phi)\right) = \Rm{Tr}\left(\transpose{\Rm{Com}(\Phi)}\cdot A \cdot \Phi\right),
\end{align*}
where $A(\lambda, x)$ is the $n\times n$ matrix obtained if one vectorise the ODE \eqref{e:ode_wronskien}. Since the trace is invariant under circular permutation, we recognize
\begin{equation*}
\partial_x \Cal{W} = \Rm{Tr}\left(A \cdot \Phi \cdot \transpose{\Rm{Com}(\Phi)}\right) =  \det(\Phi) \Rm{Tr}(A).
\end{equation*}
This scalar ODE on $\Cal{W}$ is easily solved. Noticing that $\Rm{Tr}(A(\lambda, x)) = -a_{n-1}(x)$ conclude the proof.
\end{proof}

\begin{lemma}
\label{l:Sobolev_embeding}
Let $p, q\in \R\cup \lbrace +\infty\rbrace$ such that $1 \leq q \leq p \leq +\infty$, and $X, Y$ be two measurable spaces. Then the following injection is continuous:
\begin{equation*}
L^q(X, L^p(Y, \R)) \subset L^p(Y, L^q(X, \R)).
\end{equation*}
\end{lemma}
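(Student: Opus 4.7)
The plan is to prove this by reduction to Minkowski's integral inequality. Writing $r := p/q \in [1, +\infty]$ and $g := |f|^q$, the claimed embedding is equivalent to
\begin{equation*}
\Norme{g}_{L^1(X, L^r(Y))}^{1/q} \leq \Norme{g}_{L^r(Y, L^1(X))}^{1/q}.
\end{equation*}
Wait, I have the direction backwards. The statement $L^q(X, L^p(Y)) \subset L^p(Y, L^q(X))$ after raising to the $q$-th power reads
\begin{equation*}
\Norme{g}_{L^r(Y, L^1(X))} \leq \Norme{g}_{L^1(X, L^r(Y))},
\end{equation*}
which is exactly Minkowski's integral inequality for the exponent $r \geq 1$. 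So the first step is this reduction, and it suffices to prove Minkowski's integral inequality (the case $q=1$, arbitrary $r \geq 1$) to conclude.

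For Minkowski's integral inequality, I would distinguish three cases. When $r = 1$ it is an equality by Fubini's theorem; when $r = +\infty$ one has the pointwise estimate $\int_X g(x,y)\d x \leq \int_X \Norme{g(x,\cdot)}_{L^\infty(Y)} \d x$ taking essential supremum over $y$. For $1 < r < +\infty$, I would use duality: for any nonnegative $\phi \in L^{r'}(Y)$ with $\Norme{\phi}_{L^{r'}(Y)} \leq 1$, Fubini and Hölder's inequality give
\begin{equation*}
\int_Y \phi(y) \int_X g(x,y) \d x \d y = \int_X \int_Y g(x,y) \phi(y) \d y \d x \leq \int_X \Norme{g(x,\cdot)}_{L^r(Y)} \d x.
\end{equation*}
Taking supremum over such $\phi$ yields $\Norme{\int_X g(x,\cdot) \d x}_{L^r(Y)} \leq \Norme{g}_{L^1(X, L^r(Y))}$, which is the desired inequality.

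The main obstacle, if any, is merely book-keeping on the measurability of the functions involved (applications of Fubini require joint measurability of $g$ on $X\times Y$, which is assumed implicitly in writing iterated Lebesgue norms). Otherwise the argument is entirely standard, and the endpoint cases $q=p$ (trivial by Fubini) and $p = +\infty$ (pointwise bound) can also be verified directly if desired.
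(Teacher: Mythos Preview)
Your proposal is correct and follows the same approach as the paper, which simply states ``This is the well-known Minkowski's integral inequality.'' You supply the explicit reduction (via $r=p/q$, $g=|f|^q$) and a standard duality proof of Minkowski that the paper omits, so your argument is strictly more detailed but identical in spirit.
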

\begin{proof}
This is the well-known Minkowsky's integral inequality.
\end{proof}

\begin{lemma}
\label{l:integrale_non_lineaire}
Let $\alpha, \beta > 1$ and $\gamma>0$ be reals. Then there exists $C>0$ such that for all $t\geq 0$:
\begin{equation}
\label{e:integrale_non_lineaire}
\int_0^t \frac{\d s}{(1+s)^\alpha (1+t-s)^\beta} \leq \frac{C}{(1+t)^{\min(\alpha, \beta)}},
\hspace{4em}
\int_0^t \frac{e^{-\gamma (t-\tau)}}{(1+\tau)^\alpha}\d \tau \leq \frac{C}{(1+t)^\alpha}.
\end{equation}
\end{lemma}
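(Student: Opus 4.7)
The plan is to split each integral at $t/2$ and bound each half separately using the standard observation that on one half the fast-decaying factor is essentially constant at its large-time value, while the integral of the other factor is bounded uniformly in $t$.

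For the first integral, on $[0, t/2]$ we have $1 + t - s \geq 1 + t/2 \geq (1+t)/2$, so $(1+t-s)^{-\beta} \leq 2^\beta (1+t)^{-\beta}$. Pulling this out, the remaining integral $\int_0^{t/2} (1+s)^{-\alpha}\,ds$ is bounded by $\int_0^{+\infty} (1+s)^{-\alpha}\,ds < +\infty$ since $\alpha > 1$. This gives a bound of $C(1+t)^{-\beta}$ on this half. A symmetric argument (using the substitution $s \mapsto t-s$, which exchanges $\alpha$ and $\beta$) gives $C(1+t)^{-\alpha}$ on $[t/2, t]$, and summing yields $C(1+t)^{-\min(\alpha,\beta)}$.

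For the second integral, on $[0, t/2]$ we bound $e^{-\gamma(t-\tau)} \leq e^{-\gamma t/2}$, so this part is at most $e^{-\gamma t/2}\int_0^{+\infty}(1+\tau)^{-\alpha}\,d\tau \leq C e^{-\gamma t/2}$, which is dominated by $C(1+t)^{-\alpha}$ for all $t \geq 0$. On $[t/2, t]$, $(1+\tau)^{-\alpha} \leq 2^\alpha (1+t)^{-\alpha}$, and $\int_{t/2}^t e^{-\gamma(t-\tau)}\,d\tau \leq 1/\gamma$, so this half is bounded by $C(1+t)^{-\alpha}$ as well. Adding the two halves gives the claim.

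There is no genuine obstacle here: both estimates are standard convolution-type lemmas. The only point requiring a small amount of care is verifying that the integrals on $[0,+\infty)$ of $(1+s)^{-\alpha}$ are finite, which is exactly the hypothesis $\alpha, \beta > 1$.
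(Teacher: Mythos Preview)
Your proof is correct and, for the first integral, essentially identical to the paper's argument: split at $t/2$, freeze the slow factor, integrate the fast one, and use the $s\mapsto t-s$ symmetry. For the second integral you give a direct splitting argument, whereas the paper takes a one-line shortcut by bounding $e^{-\gamma(t-\tau)}\leq C(1+t-\tau)^{-\alpha}$ and invoking the first estimate; both routes are standard and equally short.
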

\begin{proof}
One can slightly adapt this statement when $\alpha = 1$ or $\beta = 1$, see \cite{Xin_92}. First, remark that 
\begin{equation*}
I_{\alpha, \beta}(t) := \int_0^{t/2} \frac{\d s}{(1+s)^\alpha (1+t-s)^\beta} \leq \frac{1}{(1+t/2)^\beta} \left[\frac{1-\alpha}{(1+s)^{\alpha-1}}\right]_0^{t/2} \leq \frac{\alpha - 1}{(1+t/2)^\beta}.
\end{equation*}
Hence, we cut the integral in \eqref{e:integrale_non_lineaire} at $t/2$, and the change of variable $z = t-s$ leads to
\begin{equation*}
\int_0^t \frac{\d s}{(1+s)^\alpha (1+t-s)^\beta} = I_{\alpha, \beta}(t) + I_{\beta, \alpha}(t) \leq \frac{C}{(1+t/2)^{\min(\alpha, \beta)}}\leq \frac{C\  2^{\min(\alpha, \beta)}}{(1+t)^{\min(\alpha, \beta)}}.
\end{equation*}
Which is the first claimed estimate. The second one follows from $e^{-\gamma(t-\tau)} \leq \frac{C}{(1+t-\tau)^{\alpha}}.$
\end{proof}

\begin{lemma}
\label{l:cloture_non_lineaire}
Let $b, \varepsilon$ be positive constants, and note $a_0(b, \varepsilon) = \frac{1}{2}(2b)^{-\frac{1}{\varepsilon}} >0$. Then for all $0<a<a_0$, the following holds. If $t\mapsto x(t)$ is a positive continuous function  that satisfy $x(t)\leq a + b x(t)^{1+\varepsilon}$ and such that $x(0)$ is small enough, then $x(t)\leq 2a$ for all $t\geq 0$.
\end{lemma}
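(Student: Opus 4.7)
The statement is a standard continuity/bootstrap closure, driven by a single algebraic observation. The plan is to reinterpret the threshold $a_0$ as the exact point at which the nonlinear obstruction $y \mapsto a + b y^{1+\varepsilon}$ is bested by the linear comparator $y \mapsto y$ at $y = 2a$.

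\textbf{Step 1 (algebraic reformulation).} First I would verify that the definition $a_0 = \tfrac{1}{2}(2b)^{-1/\varepsilon}$ is equivalent to $b(2a_0)^{1+\varepsilon} = a_0$. Hence for $0 < a < a_0$, strictly,
\begin{equation*}
b(2a)^{1+\varepsilon} < a, \qquad \text{i.e.,} \qquad a + b(2a)^{1+\varepsilon} < 2a.
\end{equation*}
This is the decisive inequality.

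\textbf{Step 2 (continuity argument).} Interpret \guillemet{$x(0)$ small enough} as $x(0) \leq 2a$; together with the hypothesis $x(0) \leq a + b x(0)^{1+\varepsilon}$ and the inequality of Step 1, this forces $x(0) < 2a$. Set
\begin{equation*}
T^* := \sup\Set{t \geq 0 : x(s) \leq 2a \text{ for all } 0 \leq s \leq t} \in (0, +\infty].
\end{equation*}
By continuity of $x$, $T^*$ is strictly positive and, if finite, satisfies $x(T^*) = 2a$.

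\textbf{Step 3 (contradiction).} Assume for contradiction that $T^* < +\infty$. Applying the hypothesis at $t = T^*$ gives
\begin{equation*}
2a \;=\; x(T^*) \;\leq\; a + b\, x(T^*)^{1+\varepsilon} \;=\; a + b(2a)^{1+\varepsilon} \;<\; 2a,
\end{equation*}
where the last strict inequality is precisely Step 1. This is the desired contradiction, and we conclude $T^* = +\infty$, \emph{i.e.}, $x(t) \leq 2a$ for every $t \geq 0$.

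There is no real obstacle; the only subtlety is making sure the \guillemet{$x(0)$ small enough} clause is quantified as $x(0) \leq 2a$, which is automatic from $x(0) < 2a$ via continuity, and which is in turn compatible with the very hypothesis $x(0) \leq a + bx(0)^{1+\varepsilon}$ together with $a < a_0$.
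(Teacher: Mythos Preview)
Your proof is correct and follows essentially the same route as the paper. The paper phrases the same idea via the auxiliary function $f(y) = a + b y^{1+\varepsilon} - y$: it checks $f(0) > 0$ and $f(2a) < 0$ (your Step~1), interprets the hypothesis as $f(x(t)) \geq 0$, and then argues that the connected component of $f^{-1}([0,\infty))$ containing $x(0)$ lies in $(-\infty, 2a)$, so by continuity $x$ cannot leave it. Your supremum/first-crossing-time formulation is the standard equivalent packaging of that connectedness argument.
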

\begin{proof}
Introduce $f(x) = a + bx^{1+\varepsilon} - x$. Then $f(2a) = a(2^{1+\varepsilon} b a^\varepsilon- 1) < 0$ when $a$ is smaller than $a_0$, while $f(0) = a >0$. Assume $x(0)\leq 2a$. Then the connected component of $f^{-1}(\R_+)$ that contains $x(0)$ is include in $(-\infty, 2a)$. Since $f(x(t))\geq 0$ by construction, the continuity of $x$ implies $x(t)\leq 2a$ for all $t>0$. This conclude the proof.
\end{proof}

\section*{Acknowledgments}
The author thanks Gr{\'e}gory Faye for multiple fruitful discussions.
This works was partially supported by Labex CIMI under grant agreement ANR-11-LABX-0040.

\bibliographystyle{aomalpha}
\bibliography{/home/garenaux/texmf/Bibliographie}
\end{document}